% packages
\documentclass[11pt]{article}
\usepackage{amsmath,amsthm,amssymb}
\usepackage{fancyhdr}
\usepackage[british]{babel}
\usepackage{geometry,mathtools}
\usepackage{enumitem}
\usepackage{algpseudocode}
\usepackage{dsfont}
\usepackage{centernot}
\usepackage{xstring}
\usepackage{colortbl}
\usepackage[section]{algorithm}
\usepackage{graphicx}
\usepackage[font={footnotesize}]{caption}
\usepackage[usenames,dvipsnames,table]{xcolor}
\usepackage{pstricks,tikz}
\usepackage[h]{esvect}
\usepackage[
		bookmarksopen=true,
		bookmarksopenlevel=1,
		colorlinks=true,
		linkcolor=darkblue,
        linktoc=page,
		citecolor=darkblue,
]{hyperref}
\usepackage{bbm}
\usepackage{appendix}

\geometry{a4paper,tmargin=2cm,bmargin=2.1cm,lmargin=2cm,rmargin=2cm,headheight=2.5cm,headsep=1cm,footskip=1cm}

%%%%%%%%%%%%%%%%%%%%%%%%%%%%%%%%%%%%%%%%%%%%%%%%%%%%
% style & layout

\hyphenation{quasi-random sub-graph di-graph mul-ti-graph pro-ba-bi-lis-tic ver-ti-ces}

\numberwithin{equation}{section}

\definecolor{darkblue}{rgb}{0,0,0.5}

\newdimen\margin
\def\textno#1&#2\par{
   \margin=\hsize
   \advance\margin by -4\parindent
          \setbox1=\hbox{\sl#1}
   \ifdim\wd1 < \margin
      $$\box1\eqno#2$$
   \else
      \bigbreak
      \hbox to \hsize{\indent$\vcenter{\advance\hsize by -3\parindent
      \it\noindent#1}\hfil#2$}
      \bigbreak
   \fi}

%%%%%%%%%%%%%%%%%%%%%%%%%%%%%%%%%%%%%%%%%%%%%%%%%%%%
% environments

\newtheorem{theorem}[algorithm]{Theorem}
\newtheorem{prop}[algorithm]{Proposition}
\newtheorem{lemma}[algorithm]{Lemma}
\newtheorem{cor}[algorithm]{Corollary}

\theoremstyle{definition}

\newtheorem{conj}[algorithm]{Conjecture}

\newtheorem{remark}[algorithm]{Remark}

 % in each main proof, claim and step counter set back

\def\lateproof#1{\removelastskip\penalty55\medskip\noindent\begin{stepenv}\end{stepenv}{\bf Proof of #1. }} % in each main proof, claim and step counter set back

\def\noproof{{\unskip\nobreak\hfill\penalty50\hskip2em\hbox{}\nobreak\hfill%
       $\square$\parfillskip=0pt\finalhyphendemerits=0\par}\goodbreak}
\def\endproof{\noproof\bigskip}

\def\claimproof{\removelastskip\penalty55\medskip\noindent{\em Proof of claim: }}
\def\noclaimproof{{\unskip\nobreak\hfill\penalty50\hskip2em\hbox{}\nobreak\hfill%
       $\diamond$\parfillskip=0pt\finalhyphendemerits=0\par}\goodbreak}

\def\endclaimproof{\noclaimproof\medskip}

\newcounter{stepenv}
\newenvironment{stepenv}[1][]{\refstepcounter{stepenv}}{}

\newcounter{step}[stepenv]

\newcounter{substep}[step]
\renewcommand{\thesubstep}{\thestep.\arabic{substep}}

\newcounter{claim}[stepenv]
\newenvironment{claim}[1][]{\refstepcounter{claim}\par\medskip\noindent%
        \textit{Claim~\theclaim. #1} \itshape\rmfamily}{\medskip}

%%%%%%%%%%%%%%%%%%%%%%%%%%%%%%%%%%%%%
% font letters

\newcommand{\cA}{\mathcal{A}}
\newcommand{\cB}{\mathcal{B}}

\newcommand{\bP}{\mathbb{P}}

\newcommand{\bR}{\mathbb{R}}

\newcommand{\vx}{\mathbf{x}}
\newcommand{\vy}{\mathbf{y}}
\newcommand{\vz}{\mathbf{z}}

\def\eps{{\epsilon}}

%%%%%%%%%%%%%%%%%%%%%%%%%%%%%%%%%%%%%
% macros 

\newcommand{\defn}{\emph}

\newcommand{\expn}[1]{\mathrm{\mathbb{E}}\left[#1\right]}

\def\lcl{\left\lceil}
\def\rcl{\right\rceil}

\newcommand{\Set}[1]{\{#1\}}
\newcommand{\set}[2]{\{#1\,:\;#2\}}
\def\In{\subset}

\newcommand{\mc}{{\rm mc}}
\renewcommand{\sp}{{\rm sp}}

%%%%%%%%%%%%%%%%%%%%%%%%%%%%%%%%%%%%
% miscellaneous 

\def\COMMENT#1{}
\def\TASK#1{}
\let\TASK=\footnote             % COMMENT OUT for clean output
\let\COMMENT=\footnote          % COMMENT OUT for clean output

%%%%%%%%%%%%%%%%%%%%%%%%%%%%%%%%%%%%%%%%%%%%%%%%%%%%%%%%%%%%%%%%%%%%%%%%%%%%%%%%%%%

\begin{document}

\title{\vspace{-0.9cm}New results for MaxCut in $H$-free graphs}

\author{Stefan Glock \thanks{Institute for Theoretical Studies, ETH Z\"urich, Switzerland.
		Email: \href{mailto:dr.stefan.glock@gmail.com}{\nolinkurl{dr.stefan.glock@gmail.com}}.
		Research supported by Dr. Max R\"ossler, the Walter Haefner Foundation and the ETH Z\"urich Foundation.}
	\and Oliver Janzer \thanks{Department of Mathematics, ETH Z\"urich, Switzerland. Email: \href{mailto:oliver.janzer@math.ethz.ch} {\nolinkurl{oliver.janzer@math.ethz.ch}}.
	Research supported by an ETH Z\"urich Postdoctoral Fellowship 20-1 FEL-35.}
	\and Benny Sudakov \thanks{Department of Mathematics, ETH Z\"urich, Switzerland. Email:
	\href{mailto:benny.sudakov@gmail.com} {\nolinkurl{benny.sudakov@gmail.com}}.
	Research supported in part by SNSF grant 200021-175573.}
}

\date{}

\maketitle

\begin{abstract} 
The MaxCut problem asks for the size ${\rm mc}(G)$ of a largest cut in a graph~$G$. It is well known that ${\rm mc}(G)\ge m/2$ for any $m$-edge graph $G$, and the difference ${\rm mc}(G)-m/2$ is called the \emph{surplus} of~$G$. The study of the surplus of $H$-free graphs was initiated by Erd\H{o}s and Lov\'asz in the 70s, who in particular asked what happens for triangle-free graphs.
This was famously resolved by Alon, who showed that in the triangle-free case the surplus is $\Omega(m^{4/5})$, and found constructions matching this bound.
We prove several new results in this area.
\begin{enumerate}[label=\rm{(\roman*)}]
    \item We show that for every fixed odd $r\ge 3$, any $C_r$-free graph with $m$ edges has surplus $\Omega_r\big(m^{\frac{r+1}{r+2}}\big)$. This is tight, as is shown by a construction of pseudorandom $C_r$-free graphs due to Alon and Kahale. It improves previous results of several researchers, and complements a result of Alon, Krivelevich and Sudakov which is the same bound when $r$ is even.
    
    \item Generalizing the result of Alon, we allow the graph to have triangles, and show that if the number of triangles is a bit less than in a random graph with the same density, then the graph has large surplus. For regular graphs our bounds on the surplus are sharp.
    
    \item We prove that an $n$-vertex  graph with few copies of $K_r$ and average degree $d$ has surplus $\Omega_r(d^{r-1}/n^{r-3})$, which is tight when $d$ is close to $n$ provided that a conjectured dense pseudorandom $K_r$-free graph exists. This result is used to improve the best known lower bound (as a function of $m$) on the surplus of $K_r$-free graphs.
\end{enumerate}
Our proofs combine techniques from semidefinite programming,  probabilistic reasoning, as well as combinatorial and spectral arguments.
\end{abstract}

\section{Introduction}

\emph{MaxCut} is a central problem in discrete mathematics and theoretical computer science. Given a graph $G$, a \defn{cut} is a partition of the vertex set into two parts, and its size is the number of edges going across.
The aim is to determine the maximum size of a cut, denoted here by~$\mc(G)$. This problem has received a lot of attention in the last 50 years, both from an algorithmic perspective in theoretical computer science, where the aim is to approximate $\mc(G)$ well for a given graph, and from an extremal perspective in combinatorics, where we are mainly interested in good bounds on $\mc(G)$ in terms of the number of vertices and/or edges of~$G$.

A folklore observation is that every graph with $m$ edges has a cut of size at least~$m/2$. This can be easily seen using a probabilistic argument, or a greedy algorithm.
Therefore, it is a fundamental question by how much this trivial bound can be improved.
Since it was already demonstrated by Erd\H{o}s~\cite{erdos:67} in the 60s that the factor $1/2$ cannot be improved in general, even if we consider very restricted families such as graphs of large girth, the natural parameterization is to consider the so-called \defn{surplus} of a graph $G$, denoted $\sp(G)$, which is the difference $\mc(G)-\frac{m}{2}$ of the optimal cut size to the greedy bound.
By a classical result of Edwards \cite{edwards:73,edwards:75}, every graph $G$ with $m$ edges has surplus 
\begin{align}
\sp(G) \ge \frac{\sqrt{8m+1}-1}{8},\label{edwards}
\end{align}
and this is tight whenever $G$ is a complete graph with an odd number of vertices. 

Although the bound $\Omega(\sqrt{m})$ on the surplus is optimal in general, it can be significantly improved for graphs that are ``far'' from complete. One natural way to enforce this is to forbid the containment of a fixed subgraph. The study of MaxCut in $H$-free graphs was initiated by Erd\H{o}s and Lov\'asz (see~\cite{erdos:79}) in the 70s, and has received significant attention since then (e.g.~\cite{alon:96,ABKS:03,AKS:05,CKLMST:18,FHM:21,PT:94,shearer:92,ZH:18}). 
For a graph $H$, define $\sp(m,H)$ as the minimum surplus $\sp(G)=\mc(G)-m/2$ over all $H$-free graphs $G$ with $m$ edges.

It was shown in~\cite{ABKS:03} that for every fixed graph $H$, there exist constants $\eps=\eps(H)>0$ and $c=c(H)>0$ such that $\sp(m,H)\ge c m^{1/2+\eps}$ for all~$m$. This demonstrates that the surplus of $H$-free graphs is significantly larger than the bound~\eqref{edwards} in Edwards' theorem. Perhaps the main conjecture in the area, due to Alon, Bollob\'{a}s, Krivelevich and Sudakov~\cite{ABKS:03}, is that the constant $1/2$ in the aforementioned result can be replaced with~$3/4$, which would be optimal as can be seen by considering a random graph with edge probability $p=n^{-\delta}$ for some arbitrarily small $\delta>0$ and $H=K_r$ sufficiently large.
This conjecture is still wide open. In fact, we do not even know whether there exists a constant $\alpha>1/2$ such that uniformly for all $H$, we have $\sp(m,H)\ge c_H m^{\alpha}$, where $c_H>0$ may depend on~$H$.

An even more ambitious problem, first explicitly posed in \cite{AKS:05}, is to determine the asymptotic growth rate of $\sp(m,H)$ for every fixed graph~$H$. However, results of the type $\sp(m,H)=\Theta(m^{q_H})$ for some constant $q_H$ depending only on $H$, are still very rare. 
The case when $H$ is a triangle has received particularly much attention. Erd\H{o}s and Lov\'asz (see~\cite{erdos:79}) first showed that 
$\sp(m,K_3)=  \Omega\left(m^{2/3}\left(\log m/\log \log m\right)^{1/3}\right)$. 
They also noted that it seems unclear what the correct exponent of $m$ should be, even in this elementary case, which testifies about the difficulty of the problem.
The logarithmic factor was later improved by Poljak and Tuza~\cite{PT:94}. Shearer~\cite{shearer:92} proved an important bound on the surplus of any triangle-free graph $G$ showing that
\begin{align}
\sp(G)=\Omega\left(\sum_{v\in V(G)} \sqrt{d_G(v)}\right).\label{shearer}
\end{align}
This bound is tight in general and implies that $\sp(m,K_3)= \Omega(m^{3/4})$.
Finally, Alon~\cite{alon:96} proved that
$\sp(m,K_3)=\Theta(m^{4/5})$. His result is exemplary for many of the challenges and developed methods in the area.

Our contribution in this paper is threefold. First, we determine $q_H$ for all odd cycles, which improves earlier results of several researchers and adds to the lacunary list of graphs for which $q_H$ is known.
Secondly, we extend the result of Alon in the sense that we prove optimal bounds on the surplus of general graphs in terms of the number of triangles they contain.
Thirdly, we study the surplus of graphs with few cliques of size $r$ and improve the currently best bounds for $K_r$-free graphs.

\subsection{Cycles of odd length}\label{subsec:odd cycles}

The study of MaxCut in graphs without short cycles goes back to the work of Erd\H{o}s and Lov\'asz (see~\cite{erdos:79}).
Motivated by one of their conjectures, Alon, Bollob\'{a}s, Krivelevich and Sudakov~\cite{ABKS:03} showed that $m$-edge graphs not containing any cycle of length at most $r$ have surplus $\Omega_r(m^{(r+1)/(r+2)})$. 
Later, Alon, Krivelevich and Sudakov~\cite{AKS:05} proved that 
\begin{align}
\sp(m,C_r)=\Omega_r\left(m^{(r+1)/(r+2)}\right)\label{cycle surplus}
\end{align}
for all even~$r$. They also conjectured that this bound is tight. It is well known (see e.g.~\cite{alon:96,DP:93,MP:90})
that if a graph $G$ has smallest eigenvalue $\lambda_{\rm{min}}$, then
\begin{align}
\sp(G)\le |\lambda_{\rm{min}}||G|/4.\label{eigenvalue bound}
\end{align}
In fact, essentially all known extremal examples for the MaxCut problem of $H$-free graphs come from $(n,d,\lambda)$-graphs (i.e., $n$-vertex $d$-regular graphs where each nontrivial eigenvalue has absolute value at most $\lambda$).
For instance, the well-known Erd\H{o}s--R\'enyi graphs~\cite{ER:62} are $C_4$-free $(n,d,\lambda)$-graphs with $d=\Theta(\sqrt{n})$ and $\lambda=O(\sqrt{d})$, which arise as polarity graphs of finite projective planes. By~\eqref{eigenvalue bound}, these graphs have surplus at most $O(\lambda n)=O((nd)^{5/6})$, which shows the optimality of the bound~\eqref{cycle surplus} in the case $r=4$.
Similar constructions work in the cases $r\in \Set{6,10}$ (see~\cite{AKS:05} for more details).
However, showing the tightness of~\eqref{cycle surplus} in general is probably very hard since it seems to go hand in hand with the construction of extremal examples for the Tur\'an problem of even cycles.

Another obvious problem left open by the work of Alon, Krivelevich and Sudakov~\cite{AKS:05} is to establish~\eqref{cycle surplus} for odd cycles too.
One advantage here is that the conjectured extremal examples are already known. Indeed, the construction of triangle-free dense pseudorandom graphs due to Alon~\cite{alon:94} generalizes to longer odd cycles, as was observed by Alon and Kahale~\cite{AK:98} (see also~\cite{KS:06}). This yields, for every fixed odd $r$, $C_r$-free $(n,d,\lambda)$-graphs with $d=\Theta(n^{2/r})$ and $\lambda=O(\sqrt{d})$. By~\eqref{eigenvalue bound}, these graphs have surplus at most $O(\lambda n)=O((nd)^{(r+1)/(r+2)})$, which shows that~\eqref{cycle surplus} would be optimal for all odd~$r$.
Regarding this problem, Zeng and Hou~\cite{ZH:18} showed that $\sp(m,C_r)\ge m^{(r+1)/(r+3)+o(1)}$ for all odd $r$, and very recently Fox, Himwich and Mani~\cite{FHM:21} improved the surplus to $\Omega_r(m^{(r+5)/(r+7)})$.
We settle this problem completely by proving the following tight result.

\begin{theorem}\label{thm:odd cycles}
For odd $r\ge 3$ there is a constant $\alpha_r>0$ such that any $C_r$-free graph with $m$ edges has a cut of size at least $m/2+\alpha_r\cdot m^{(r+1)/(r+2)}$. This is tight up to the value of $\alpha_r$.
\end{theorem}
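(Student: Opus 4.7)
The plan is to adapt the strategy of Alon, Krivelevich and Sudakov~\cite{AKS:05} from the even-cycle case, combined with eigenvector-rounding ideas from Alon's triangle-free proof~\cite{alon:96}. After a standard dyadic pigeonhole on the degree sequence, I would reduce to the case where $G$ is essentially $d$-regular on $n$ vertices with $nd$ of order $m$, so the target inequality becomes $\sp(G) \gtrsim_r (nd)^{(r+1)/(r+2)}$. At the ``pseudorandom'' density $d \asymp n^{2/r}$ (where the Alon--Kahale construction lives) this quantity is of order $n\sqrt{d}$, so it suffices to prove the cleaner spectral bound $\sp(G) \gtrsim_r n\sqrt{d}$ in that regime. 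For $d$ substantially larger than $n^{2/r}$ I would argue separately that odd-cycle stability forces $G$ to be close to bipartite, which alone provides surplus linear in $m$.

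The heart of the proof is a Shearer-type inequality
\[
\sp(G) \gtrsim_r n\sqrt{d}
\]
for $d$-regular $C_r$-free graphs with $r = 2k+1$. Starting from the identity $\sp(G) = \tfrac14 \max_{x \in \{\pm 1\}^n}(-x^\top A x)$, I would pass to the SDP relaxation and aim to produce a vector (or randomised $\pm 1$) assignment whose ``odd-cycle score'' realises this bound. In Shearer's original argument for the triangle-free case, triangle-freeness enters through the independence of closed neighborhoods; for $C_{2k+1}$-free graphs the natural replacement is a Moore-type estimate on the entries of $A^k$, saying that no two vertices can be joined by too many length-$k$ paths without creating a $C_{2k+1}$. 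Combining this path-counting input with a probabilistic/SDP rounding should yield $|\lambda_{\min}(G)| \gtrsim_r \sqrt{d}$, which is the spectral version of the target bound.

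Converting the spectral bound into a cut is standard once a unit eigenvector $v$ for $\lambda_{\min}$ is well spread in $L^\infty$: the rounding $x = \mathrm{sgn}(v)$ then satisfies $-x^\top A x \gtrsim n |\lambda_{\min}|$, which gives the desired surplus. If instead $v$ concentrates on a small set $S \subseteq V(G)$, I would restrict to the induced subgraph $G[S]$, which inherits $C_r$-freeness, and iterate on this denser subgraph; the concentration cannot persist indefinitely because each iteration drives the density up. Optimising over $(n,d)$ with $nd$ of order $m$ then delivers $\sp(G) \gtrsim_r m^{(r+1)/(r+2)}$, and tightness follows from~\eqref{eigenvalue bound} applied to the Alon--Kahale graphs~\cite{AK:98}.

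The main obstacle, I expect, is the Shearer-type bound of the second paragraph. In the even case of~\cite{AKS:05} one uses that the ``$k$th neighborhood'' of every vertex in a $C_{2k}$-free graph is a very rigid bipartite-like object; for odd $r$ no symmetric local structure plays this role, since only a single long cycle is forbidden. Matching the sharp exponent $(r+1)/(r+2)$, rather than the weaker $(r+5)/(r+7)$ of~\cite{FHM:21}, appears to require weaving the path-counting, SDP, and eigenvector-rounding arguments together, rather than applying them sequentially.
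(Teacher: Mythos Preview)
Your plan has a genuine gap in the ``dense'' regime, and this is precisely where all the work lies. The Shearer-type bound $\sp(G)\gtrsim_r n\sqrt{d}$ for $C_r$-free (near-)regular graphs is already known (it follows from the few-triangles extension of Shearer's bound, see Lemma~\ref{lemma:five cycle sparse}), and it only yields $m^{(r+1)/(r+2)}$ when $d\le m^{2/(r+2)}$, equivalently $d\lesssim n^{2/r}$. For larger $d$ the quantity $n\sqrt{d}=m/\sqrt{d}$ is strictly \emph{smaller} than $m^{(r+1)/(r+2)}$, so ``optimising over $(n,d)$'' cannot help: the graph $G$ is given, and you must prove a bound that is strong enough for \emph{every} $d$. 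Your proposed remedy, that for $d$ substantially above $n^{2/r}$ ``odd-cycle stability forces $G$ to be close to bipartite'', is false. Erd\H{o}s--Simonovits stability for $C_r$ only applies when $e(G)$ is close to $n^2/4$; a $C_5$-free graph with $d=n^{0.9}$, say, need not be anywhere near bipartite, and there is no structural shortcut in this intermediate range.

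The paper's proof handles the dense case by a genuinely different mechanism. After regularising and performing dyadic pigeonholing on the walk-counts $h_{j-i}(u_i,u_j)$ along $\ell$-paths (with $\ell=(r-1)/2$), it proves \emph{two} complementary bounds depending on a codegree-like parameter $s=s_{0,\ell}$: an SDP bound $\sp(G)=\Omega_r(n d^{1/2}s^{1/(2\ell-2)})$ and a random-neighbourhood-sampling bound $\sp(G)=\Omega_r(d^{\ell+1}/s)$. The first improves as $s$ grows, the second as $s$ shrinks, and their geometric combination gives the target $n^{\alpha}d^{\beta}$ for all $d$. The $C_r$-freeness enters not through a crude Moore-type bound on $A^k$ (your claim that ``no two vertices can be joined by too many length-$k$ paths without creating a $C_{2k+1}$'' is false, since the paths may share internal vertices), but through a delicate counting lemma (Lemma~\ref{lemma:smallavgdeg}) bounding the average degree of an auxiliary graph recording which pairs in a ``$q$th neighbourhood'' close up to a short odd cycle. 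None of these ingredients is present in your sketch.
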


\subsection{Few triangles} \label{subsec:intro triangles}
Shearer's bound~\eqref{shearer} implies that any triangle-free $d$-degenerate graph with $m$ edges has surplus
$\Omega(m/\sqrt{d})$. In his proof of $\sp(m,K_3)=\Omega(m^{4/5})$, Alon \cite{alon:96} combined this estimate with an additional argument which shows that a triangle-free graph with average degree $d$ has surplus $\Omega(d^2)$. 
This supersedes Shearer's result when $d$ is larger than roughly $m^{2/5}$. It is natural to ask what happens with both bounds if we do not insist that the graph is triangle-free, but only has few triangles.

Alon, Krivelevich and Sudakov~\cite{AKS:05} proved an extension of Shearer's bound~\eqref{shearer} to graphs with few triangles. They showed that if every vertex $v\in V(G)$ is contained in $o(d_G(v)^{3/2})$ triangles, then (\ref{shearer}) remains true. Their motivation was to use this as a tool to study the surplus of $H$-free graphs when $H$ is an even cycle or a small complete bipartite graph, since then neighbourhoods are sparse and hence every vertex is contained in few triangles. 
Recently, Carlson, Kolla, Li, Mani, Sudakov and Trevisan~\cite{CKLMST:18} showed that the local assumption can be relaxed to a global condition, namely, any $d$-degenerate graph with $m$ edges and $O(m \sqrt{d})$ triangles has surplus $\Omega(m/\sqrt{d})$. Their proof is based on a probabilistic rounding of the solution of the semidefinite programming relaxation of MaxCut. This is one of our main tools too, and we will discuss it in more detail in Section~\ref{sec:illustration}.

While the above extends Shearer's bound to graphs with few triangles, it is also natural to ask how Alon's second bound, the surplus $\Omega(d^2)$, is affected when we allow triangles. 
We answer this question, proving that if $G$ has less triangles than a random graph of the same average degree, it has large surplus.

\begin{theorem}\label{thm:triangle simple}
	Let $G$ be a graph with average degree $d$ and less than $(1-\eps)d^3/6$ triangles, for some $\eps>0$. Then $G$ has surplus $\Omega_{\eps}(d^2)$.
\end{theorem}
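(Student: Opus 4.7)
My strategy is spectral, combined with an SDP-based rounding. The starting observation is that having fewer triangles than the ``random'' count $d^3/6$ forces a substantially negative eigenvalue of the adjacency matrix $A=A(G)$, and this can be converted into a cut with large surplus.

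\emph{Step 1 (spectral extraction).} Let $\lambda_1 \ge \cdots \ge \lambda_n$ be the eigenvalues of $A$. The identities $\operatorname{tr}(A^2)=2m=nd$ and $\operatorname{tr}(A^3)=6\,t(G)$, combined with $\lambda_1\ge d$ (the top eigenvalue is at least the average degree), give
\[
\sum_{i\ge 2}\lambda_i^{3} \;=\; 6\,t(G)-\lambda_1^{3} \;\le\; -\eps d^{3}.
\]
Splitting by sign and bounding $-\lambda_i^3\le |\lambda_n|\lambda_i^2$ on the negative eigenvalues, one concludes $\eps d^{3}\le |\lambda_n|\sum_i \lambda_i^2 \le |\lambda_n|\cdot nd$, so $|\lambda_n|\ge \eps d^{2}/n$.

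\emph{Step 2 (SDP surplus).} Let $f$ be a unit eigenvector for $\lambda_n$. Build an SDP solution by setting $u_v = \alpha f(v)\,e_0 + \sqrt{1-\alpha^{2}f(v)^{2}}\,e_v$ for orthonormal $e_0,(e_v)_{v\in V(G)}$ and $\alpha = 1/\|f\|_\infty$. These are unit vectors with
\[
\sum_{uv\in E(G)} \tfrac{1-u_u\cdot u_v}{2} \;=\; \tfrac{m}{2} + \tfrac{\alpha^{2}|\lambda_n|}{4}.
\]
When $f$ is delocalized, $\|f\|_\infty^{2}=O(1/n)$, so $\alpha^{2}|\lambda_n|=\Omega_\eps(d^{2})$. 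If instead $f$ concentrates on a small set $S$, then most of the negative eigenvalue is ``carried'' by $G[S]$, and one can restrict to $S$ and iterate. Alternatively, one can replace the rank-1 embedding above with a more global SDP solution built from several low eigenvectors so that no delocalization hypothesis on any single eigenvector is needed.

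\emph{Step 3 (rounding; main obstacle).} Convert the SDP surplus into an honest integer cut surplus. This is the crux. Standard Goemans--Williamson hyperplane rounding preserves the SDP value only multiplicatively, so it does not directly turn an additive surplus of $\Omega_\eps(d^{2})$ above $m/2$ into an integer cut surplus of the same order. I would attempt either a Charikar--Wirth-type probabilistic rounding that preserves additive gain (at the cost of at most a constant depending on $\eps$), or a combinatorial improvement in which one starts from a random cut and greedily moves vertices whose local discrepancy is large, using the few-triangles assumption to ensure that these local improvements largely add up rather than cancel. The main technical challenge is carrying out this rounding so that the final loss depends only on $\eps$ (and in particular not on $d$, $m$, or $n$), thereby retaining the full $\Omega_\eps(d^{2})$ surplus predicted by the spectral argument.
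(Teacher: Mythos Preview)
Your Step~1 is correct and yields the clean inequality $|\lambda_n|\ge \eps d^{2}/n$. But the proposal is not a proof: you explicitly leave Step~3 open, and Step~2 already has an unresolved case.

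The gap is not merely technical. From $|\lambda_n|\ge \eps d^{2}/n$ alone one cannot conclude $\sp(G)=\Omega_\eps(d^{2})$; the implication $\sp(G)\ge c\,|\lambda_n|\,n$ is false in general (only the reverse inequality~\eqref{eigenvalue bound} holds). Your SDP solution attains value $m/2+\alpha^{2}|\lambda_n|/4$, which is $m/2+\Omega_\eps(d^{2})$ only under the delocalisation hypothesis $\|f\|_\infty^{2}=O(1/n)$. The suggestion to ``restrict to $S$ and iterate'' when $f$ is localized is not an argument: passing to $G[S]$ destroys the average-degree hypothesis, and there is no potential function offered that makes the iteration terminate with the right bound. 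Even granting delocalisation, Step~3 is a genuine obstacle rather than a routine completion. Goemans--Williamson loses a multiplicative constant on the whole objective, not on the surplus. If instead you scale down to $\gamma\alpha f(v)$ and use $\arcsin(x)\le x+O(x^{3})$ (as in Lemma~\ref{lemma:general SDP}), the cubic error term is of order $\gamma^{6}\alpha^{6}\sum_{uv}|f(u)f(v)|^{3}\le \gamma^{6}m$, while the gain is $\gamma^{2}\alpha^{2}|\lambda_n|$; optimising over $\gamma$ does \emph{not} give $\Omega_\eps(d^{2})$ when $n$ is much larger than $d$. So the plan, as written, does not close.

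The paper proceeds along an entirely different, non-spectral route. It first applies a regularisation lemma (Lemma~\ref{lemma:regularize tight} with $\alpha=0$, $\beta=2$) to reduce to the case $\Delta(G)\le C\,d$ while preserving the triangle hypothesis, and then proves the sharper Theorem~\ref{thm:triangle square threshold} by a direct random-sampling construction: choose $k\approx \eps n/(Cd)$ random vertices $v_1,\dots,v_k$, let $A_j$ be (essentially) the neighbourhood of $v_j$ and $B_j$ a random subset of density $\approx d/n$, and show $\mathbb{E}\bigl[\sum_j (e(A_j,B_j)-e(A_j)-e(B_j))\bigr]=\Omega_\eps(d^{2})$. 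The triangle assumption enters exactly once, to bound $\sum_j e(A_j)$, and the surplus follows from Lemma~\ref{lemma:surplus additive}. There is no eigenvector, no delocalisation issue, and no rounding step.
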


In the case of regular graphs, we obtain even a much sharper result, which expresses the surplus as a function of the order, degree, and number of triangles of the graph.

\begin{theorem} \label{thm:triangledependence}
    Let $G$ be a $d$-regular graph with $n$ vertices and $d^3/6+s$ triangles, where $d\leq n/2$.
    \begin{itemize}
        \item If $s<-nd^{3/2}$, then $G$ has surplus $\Omega(\frac{|s|}{d})$.
        \item If $-nd^{3/2}\leq s \leq nd^{3/2}$, then $G$ has surplus $\Omega(nd^{1/2})$.
        \item If $s>nd^{3/2}$, then $G$ has surplus $\Omega(\frac{n^2d^2}{s})$.
    \end{itemize}
\end{theorem}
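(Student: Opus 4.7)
The plan is purely spectral. Let $A$ be the adjacency matrix of $G$, with eigenvalues $d = \lambda_1 \ge \lambda_2 \ge \cdots \ge \lambda_n =: \mu$. The three power-sum identities
\[
\sum_{i\ge 2}\lambda_i = -d, \qquad \sum_{i\ge 2}\lambda_i^2 = d(n-d), \qquad \sum_{i\ge 2}\lambda_i^3 = 6T - d^3 = 6s,
\]
encode the triangle count $T = d^3/6 + s$ spectrally. The backbone of the argument is a general surplus lemma for regular graphs, $\sp(G) = \Omega(n|\mu|)$, which I would establish separately by rounding a suitably spread-out bottom eigenvector (an SDP-flavoured argument). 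The task then reduces to lower-bounding $|\mu|$ in each regime using the identities above. Throughout, set $S = \sum_{\lambda_i>0,\,i\ge 2}\lambda_i$ and $Q = \sum_{\lambda_i>0,\,i\ge 2}\lambda_i^2$.

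For Case~1 ($s<-nd^{3/2}$), the pointwise inequality $\lambda^3\ge\mu\lambda^2$ (valid whenever $\lambda\ge\mu$) summed over $i\ge 2$ gives $6s\ge\mu\,d(n-d)\ge\mu\,dn/2$, using $d\le n/2$. Since $s<0$, this rearranges to $|\mu|\ge 12|s|/(dn)$, and the surplus lemma yields $\sp(G) = \Omega(|s|/d)$.

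For Case~2 ($|s|\le nd^{3/2}$), I aim to show $|\mu|=\Omega(\sqrt d)$ and then apply the surplus lemma. Assuming for contradiction $|\mu|\le\epsilon\sqrt d$ for a small constant $\epsilon$, the bound $|\lambda_i|\le|\mu|$ on the negative spectrum gives $\sum_{\lambda_i<0}\lambda_i^2\le|\mu|(d+S)$, and Cauchy--Schwarz bounds $S\le\sqrt{n\,d(n-d)} = O(n\sqrt d)$, so $\sum_{\lambda_i<0}\lambda_i^2 = O(\epsilon dn)$. Consequently $Q=\Omega(dn)$, and the weighted Cauchy--Schwarz inequality $\sum_{\lambda_i>0}\lambda_i^3\ge Q^2/S$ yields $\sum_{\lambda_i>0}\lambda_i^3 = \Omega(nd^{3/2})$. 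Combined with $|\sum_{\lambda_i<0}\lambda_i^3|\le |\mu|^3 n = O(\epsilon^3 nd^{3/2})$ this forces $s = \Omega(nd^{3/2})$, contradicting the hypothesis once $\epsilon$ is small enough. For the narrow range of $s$ near the threshold where this purely spectral argument loses constants, I would invoke the SDP-rounding result of Carlson--Kolla--Li--Mani--Sudakov--Trevisan for graphs with $O(m\sqrt d)$ triangles, which gives $\sp(G) = \Omega(m/\sqrt d) = \Omega(n\sqrt d)$ directly.

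For Case~3 ($s>nd^{3/2}$), the symmetric bound $\lambda^3\le\lambda_2\lambda^2$ gives $\lambda_2\ge 12s/(dn)$, but transferring this into a lower bound on $|\mu|$ uses the \emph{linear} identity. Cauchy--Schwarz on the negative spectrum yields $\sum_{\lambda_i<0}|\lambda_i|\ge\sum_{\lambda_i<0}\lambda_i^2/|\mu|$, which together with $\sum_{\lambda_i<0}|\lambda_i| = d+S$ rearranges to
\[
|\mu|\;\ge\;\frac{d(n-d) - Q}{d+S}.
\]
Upper bounds on $Q$ and $S$ derived from the cube identity (which controls how large $Q$ can be relative to $s$) and Cauchy--Schwarz then yield $|\mu| = \Omega(nd^2/s)$, hence $\sp(G) = \Omega(n^2d^2/s)$. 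The main obstacle is the preliminary surplus lemma $\sp(G) = \Omega(n|\mu|)$: a naive thresholding of the bottom eigenvector $v$ loses a factor $\|v\|_\infty^2\,n$, so a genuine SDP rounding is needed to effectively spread $v$. A secondary subtlety is the constant management in Case~2 near the boundary $|s|=nd^{3/2}$, where the pure spectral bound becomes delicate and the SDP-based Carlson et al.\ input is needed to close the gap.
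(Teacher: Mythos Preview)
Your proposed surplus lemma $\sp(G)=\Omega(n|\mu|)$, which you identify as the backbone of the argument, is \emph{false} for regular graphs. A clean counterexample: take $G$ to be the disjoint union of $K_{d,d}$ and a pseudorandom $d$-regular graph $R$ on $n-2d$ vertices with $\lambda_{\min}(R)=-\Theta(\sqrt d)$ (e.g.\ a Ramanujan graph). The combined graph is $d$-regular on $n$ vertices with $\lambda_{\min}(G)=-d$ coming from the $K_{d,d}$ component, so $n|\mu|=nd$. But the surplus is additive over components, giving $\sp(G)=d^2/2+\Theta(n\sqrt d)$, which is $o(nd)$ whenever $d=o(n)$. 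The bottom eigenvector here is supported entirely on the $2d$ vertices of $K_{d,d}$; there is nothing to ``spread out'', and no SDP rounding can manufacture a cut of size $\Omega(nd)$ from it. The same construction with copies of $K_{d+1}$ in place of $R$ gives a counterexample in Case~3 as well. The inequality $\sp(G)\le n|\lambda_{\min}|/4$ is the \emph{upper} bound direction, and it has no converse.

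The paper's proof avoids $\lambda_{\min}$ entirely. It works directly with the SDP vectors $\vx^v$ (indicator-like vectors encoding $N(v)$), computes $\langle \vx^u,\vx^v\rangle$ for edges $uv$ explicitly in terms of the codegree deviation $\delta(u,v)=d(u,v)-d^2/n$, and sums. The difficulty is that for edges with large positive $\delta(u,v)$ the inner product is positive and the $\arcsin$ in Lemma~\ref{lemma:general SDP} incurs a cubic error. Two further ideas handle this: (i) a random-sign modification $\vy^v$ of the SDP vectors on high-codegree edges, which converts their contribution into extra surplus; and (ii) a combinatorial argument (Lemma~\ref{lemma:surplusinaux}) on an auxiliary ``codegree graph'' showing that if the residual cubic error term is large, one can find a sparse set inside some neighbourhood and extract surplus directly from it via Lemma~\ref{lemma:surplusfromsparse}. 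Your power-sum identities for $\sum\lambda_i^k$ do capture the global triangle count, but the argument needs the finer edge-by-edge codegree information, not just the aggregate eigenvalue statistics.
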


The bounds given here are tight for all strongly regular graphs that attain the respective parameters (see Theorem~\ref{thm:strongly regular surplus} for the exact statement). Also, after a simple modification of the proof of Theorem~\ref{thm:triangledependence}, the condition $d\leq n/2$ can be replaced by $d\leq (1-\eps)n$ for any positive constant $\eps$. We remark that the conclusion of Theorem~\ref{thm:triangledependence} is no longer true if we only assume that the average degree of $G$ is $d$, rather than that it is $d$-regular. To see this, take for example $G$ to be the graph which is the union of a random graph with $n^{4/5}$ vertices and density $n^{-2/5}$ and an empty graph on $n-n^{4/5}$ vertices. It is not hard to see that with high probability, $G$ has average degree $d=\Theta(n^{1/5})$, the number of triangles in $G$ is $t(G)=\Theta(n^{6/5})$, but the surplus is only $\Theta(n)$ rather than $\Omega(nd^{1/2})$.

\subsection{Cliques}

Recall the conjecture of Alon, Bollob\'{a}s, Krivelevich and Sudakov~\cite{ABKS:03} that for every graph $H$, there exist constants $\eps=\eps(H)>0$ and $c=c(H)>0$ such that $\sp(m,H)\ge c m^{3/4+\eps}$ for all~$m$. Clearly, it suffices to prove this conjecture for cliques.
Carlson et al.~\cite{CKLMST:18} speculated that the following analogue of Shearer's bound should hold for $K_r$-free graphs, and proved that this would imply the conjecture of Alon, Bollob\'{a}s, Krivelevich and Sudakov.

\begin{conj}[\cite{CKLMST:18}]\label{conj:degeneracy}
For every fixed $r\ge 3$, any $K_r$-free $d$-degenerate graph with $m$ edges has surplus $\Omega_r(m/\sqrt{d})$.
\end{conj}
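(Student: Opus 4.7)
The plan is to attack Conjecture~\ref{conj:degeneracy} by extending the SDP-rounding framework of Carlson--Kolla--Li--Mani--Sudakov--Trevisan~\cite{CKLMST:18} via induction on~$r$, with base case $r=3$ being precisely their result. For the inductive step, fix a $K_r$-free $d$-degenerate graph $G$ with $m$ edges and a degeneracy ordering $v_1,\dots,v_n$; the back-neighbourhood $N^-(v_i)$ has at most $d$ vertices, and since $G$ is $K_r$-free, each induced subgraph $G[N^-(v_i)]$ is $K_{r-1}$-free (and trivially $d$-degenerate).

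First, I would solve the Goemans--Williamson MaxCut SDP to obtain unit vectors $\{u_v\}_{v\in V(G)}$ maximising $\tfrac{1}{2}\sum_{vw\in E(G)}(1-u_v\cdot u_w)$. Hyperplane rounding then yields a cut of expected size $\tfrac{1}{\pi}\sum_{vw\in E(G)}\arccos(u_v\cdot u_w)$, so the task reduces to proving
$\sum_{vw\in E(G)}\bigl(\arccos(u_v\cdot u_w)-\pi/2\bigr)=\Omega_r(m/\sqrt{d})$. Second, I would apply the inductive hypothesis to each $G[N^-(v_i)]$ to obtain a bipartition $\chi_i\colon N^-(v_i)\to\{\pm1\}$ whose surplus inside $G[N^-(v_i)]$ is $\Omega_r(|E(G[N^-(v_i)])|/\sqrt{d})$. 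These local cuts are then fed into a single global rounding: for each vertex $v$, combine the SDP direction $u_v$ with a perturbation $\eta_v=\tfrac{1}{\sqrt{d}}\sum_{i\,:\,v\in N^-(v_i)}\chi_i(v)$ and round by a weighted random hyperplane, so that the inductive gain at level $v_i$ is realised as a $\Theta(1/\sqrt{d})$ boost to the cut edges inside $N^-(v_i)$. The $K_r$-free hypothesis is used globally through the Gram matrix $(u_v\cdot u_w)$, which forbids an $r$-clique of mutually strongly antipodal SDP vectors, and a Cauchy--Schwarz estimate against $\sum_v d_G(v)=2m$ should then convert the aggregated inductive surpluses into the target bound $\sp(G)=\Omega_r(m/\sqrt{d})$.

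The main obstacle is the interference between the perturbations $\chi_i$ for different~$i$. In the base case $r=3$ the sets $N^-(v_i)$ span independent subgraphs, so the ``local cuts'' are trivial and Shearer-style square-root savings accumulate cleanly; for $r\ge 4$, however, the subgraphs $G[N^-(v_i)]$ and $G[N^-(v_j)]$ may share many edges and even $(r-2)$-cliques, so their locally optimal bipartitions can genuinely conflict, and naive averaging loses the inductive gain. Bridging this gap plausibly requires a genuinely new global input---a Grothendieck-type inequality on link graphs, or a structural decomposition of $K_r$-free $d$-degenerate graphs that isolates the problematic overlaps---and is presumably the reason Conjecture~\ref{conj:degeneracy} has resisted direct attack despite being central to the Alon--Bollob\'as--Krivelevich--Sudakov conjecture. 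As a first test case I would attempt $r=4$, where one may hope that the Kruskal--Katona-type sparsity of $K_3$-free neighbourhoods keeps the interference under control.
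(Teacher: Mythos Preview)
The statement you are attempting to prove is Conjecture~\ref{conj:degeneracy}, which the paper explicitly records as an \emph{open problem} from~\cite{CKLMST:18}; the paper does not prove it, and indeed no proof is known. What the paper does prove is the weaker Theorem~\ref{thm:clique degeneracy}, namely surplus $\Omega_r(m/d^{1-1/(r-1)})$, and it does so by a rather different route than yours: instead of applying an inductive hypothesis inside back-neighbourhoods, the paper bootstraps Theorems~\ref{thm:clique degeneracy} and~\ref{thm:clique free in terms of m} together via induction on~$r$, invoking Lemma~3.5 of~\cite{CKLMST:18} (which converts an $m$-only bound $\sp(m,K_{r-1})=\Omega_r(m^a)$ into a degeneracy bound $\Omega_r(m/d^{(2-a)/(1+a)})$) and then combining with the dense-case bound of Theorem~\ref{thm:clique free simple}. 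This bootstrapping loses a factor at each step, which is precisely why the exponent of $d$ drifts from $1/2$ toward~$1$ as $r$ grows.

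Your proposal is honest about its own gap, and that gap is genuine: the interference between the local bipartitions $\chi_i$ on overlapping back-neighbourhoods is not a technicality but the heart of the difficulty. Even in the step from $r=3$ to $r=4$ there is no known way to aggregate Shearer-type gains on the triangle-free subgraphs $G[N^-(v_i)]$ without losing the $\sqrt{d}$ saving --- the edges of $G[N^-(v_i)]$ are also edges of other $G[N^-(v_j)]$, and a perturbation that helps one local cut can exactly cancel the help in another. Your suggestion of a Grothendieck-type inequality or a structural decomposition is reasonable speculation, but no such tool currently exists in this context. In short: your plan is a plausible outline of what a proof \emph{might} look like, but as written it is not a proof, and the obstacle you identify is exactly why Conjecture~\ref{conj:degeneracy} remains open.
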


They also proved a weakening of this conjecture, namely the lower bound $\Omega_r(m/d^{1-1/(2r-4)})$.
We improve this bound slightly. It would be very interesting to obtain a version of this result where the exponent of $d$ is bounded by a constant smaller than $1$ uniformly for all~$r$.

\begin{theorem}\label{thm:clique degeneracy}
For every fixed $r\ge 3$, any $K_r$-free $d$-degenerate graph with $m$ edges has surplus $\Omega_r(m/d^{1-1/(r-1)})$.
\end{theorem}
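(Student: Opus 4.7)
My plan is to work in the semidefinite-programming framework of Carlson et al.~\cite{CKLMST:18}: for any unit-vector assignment $v\mapsto u_v$ on $V(G)$, Goemans--Williamson hyperplane rounding produces a cut of expected surplus at least $-\tfrac{1}{\pi}\sum_{ij\in E}\arcsin\langle u_i,u_j\rangle$. Using the expansion $-\arcsin(x)\ge -x - Cx^3$ on $[-1,1]$, the problem reduces to exhibiting vectors for which the linear quantity $-\sum_{ij\in E}\langle u_i,u_j\rangle$ dominates the cubic correction $\sum_{ij\in E}|\langle u_i,u_j\rangle|^3$, with target linear gain $\Omega_r(m/d^{1-1/(r-1)})$.

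I would then argue by induction on $r$. The base case $r=3$ is Shearer's inequality~\eqref{shearer}, which for a $d$-degenerate triangle-free graph already gives surplus at least $\sum_v\sqrt{d(v)}\ge m/\sqrt d$ directly (so one may skip the SDP detour here). For the inductive step, fix a $d$-degenerate ordering and let $F_v$ denote the back-neighbourhood of $v$; each induced subgraph $G[F_v]$ is $K_{r-1}$-free and $d$-degenerate, hence by induction admits a bipartition $F_v=A_v\cupdot B_v$ of surplus at least $c_{r-1}\cdot e(G[F_v])/d^{1-1/(r-2)}$. I would feed these partitions into an explicit SDP point: introducing one fresh orthogonal direction $e_v$ per vertex, set
\[
u_w \;=\; \beta\, e_\ast \;+\; \alpha\!\!\sum_{v:\,w\in F_v}\!\!\varepsilon_{w,v}\,e_v \;+\; (\text{orthogonal padding to unit length}),
\]
where $\varepsilon_{w,v}=+1$ if $w\in A_v$ and $-1$ if $w\in B_v$. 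For an edge $ij$ with later endpoint $v$ (so $ij\in E(G[F_v])$), the $e_v$-coordinate contributes $\alpha^2\varepsilon_{i,v}\varepsilon_{j,v}$ to $\langle u_i,u_j\rangle$; summing over such edges, the linear gain aggregates to exactly $-\alpha^2\sum_v\sp(G[F_v])$.

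Choosing $\alpha\asymp d^{-1/(r-1)}$ balances the linear gain against the cubic $\arcsin$ loss. The improvement over the exponent $1/(2r-4)$ of~\cite{CKLMST:18} arises because the inductive hypothesis is applied once globally --- aggregating the local bipartitions via the SDP --- rather than iterated. A standard double-counting identifies $\sum_v e(G[F_v])$ with the triangle count $t(G)$: when $t(G)$ is large the inductive sum suffices by itself, and when $t(G)$ is small the triangle-sparse bound of~\cite{CKLMST:18} (giving $\Omega(m/\sqrt d)$, stronger than the target once $t(G)\ll m\,d^{1/(r-1)}$) covers the remaining regime, patching the two arguments together at the natural threshold $t(G)\asymp m\,d^{1/(r-1)}$.

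The main obstacle is controlling cross terms: for pairs $i,j$ simultaneously lying in many common back-neighbourhoods, the $\alpha^2\varepsilon_{i,v}\varepsilon_{j,v}$ contributions can align constructively and enlarge $\sum|\langle u_i,u_j\rangle|^3$. I expect this is handled by flipping each bipartition $A_v\leftrightarrow B_v$ with an independent fair coin, so that all spurious cross-contributions cancel in expectation, followed by a second-moment argument producing a good deterministic realisation. A secondary bookkeeping issue is the unit-norm constraint: since $\sum_v\IND[w\in F_v]$ equals the forward-degree of $w$ (which can exceed $d$), the orthogonal padding must be tuned vertex-by-vertex, but this presents no fundamental difficulty.
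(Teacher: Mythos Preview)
Your approach diverges substantially from the paper's, and as written it has a genuine gap.

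The paper does \emph{not} run a direct SDP induction using only Theorem~\ref{thm:clique degeneracy} for $r-1$. Instead it bootstraps through Theorem~\ref{thm:clique free in terms of m}: assuming $\sp(m,K_{r-1})=\Omega_r(m^{a})$ with $a=r/(2r-3)$, it invokes Lemma~3.5 of~\cite{CKLMST:18} as a black box to obtain $m/d^{(2-a)/(1+a)}=m/d^{(r-2)/(r-1)}$. Crucially, the exponent $a=r/(2r-3)$ for $K_{r-1}$ is established using the new dense bound Theorem~\ref{thm:clique free simple} (surplus $\Omega_r(d^{r-1}/n^{r-3})$), which your argument never touches. If one only feeds the degenerate bound for $K_{r-1}$ into this machinery, the best $m$-only exponent one can extract is $a=(r-1)/(2r-4)$, and then $(2-a)/(1+a)=(3r-7)/(3r-5)>(r-2)/(r-1)$, which is too weak.

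Your direct construction has two concrete problems. First, the randomisation is a no-op: swapping $A_v\leftrightarrow B_v$ negates every $\varepsilon_{w,v}$ simultaneously, so each product $\varepsilon_{i,v}\varepsilon_{j,v}$ is invariant, and hence so is $\langle u_i,u_j\rangle$. Nothing cancels. Second, and more seriously, the unit-norm padding forces $\alpha^2\cdot\#\{v:w\in F_v\}\le 1$; the count $\#\{v:w\in F_v\}$ is the \emph{forward} degree of $w$, which is not bounded by $d$, and even in the almost-regular case is $\Theta(d)$. Thus the feasible scale is $\alpha=O(d^{-1/2})$, not $d^{-1/(r-1)}$. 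With $\alpha^2=\Theta(1/d)$ your linear gain is $\Theta\big(t(G)/d^{1+(r-3)/(r-2)}\big)$, which at the CKLMST triangle threshold $t(G)\asymp m\sqrt d$ yields only $m/d^{(2r-5)/(r-2)-1/2}$. For $r=4$ this is $m/d$, well short of the target $m/d^{2/3}$; the gap $m\sqrt d\ll t(G)\ll m d^{5/6}$ is covered by neither your SDP bound nor the few-triangles bound. (Your stated patching threshold $t(G)\asymp m\,d^{1/(r-1)}$ is also off; the few-triangles bound needs $t(G)=O(m\sqrt d)$, not $O(m\,d^{1/(r-1)})$.)
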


As discussed in Section~\ref{subsec:odd cycles}, essentially all known extremal constructions for MaxCut in $H$-free graphs are dense pseudorandom graphs.
Motivated by this observation, one can even make a more precise guess for $\sp(m,K_r)$.
However, the construction of dense pseudorandom $K_r$-free graphs is a major open problem in its own right. Sudakov, Szab\'{o} and Vu~\cite{SSV:05} raised the question whether, for fixed $r\ge 3$, there exist $K_r$-free $(n,d,\lambda)$-graphs with $d=\Omega(n^{1-1/(2r-3)})$ and $\lambda=O(\sqrt{d})$. It has been proved that this would have striking consequences for off-diagonal Ramsey numbers~\cite{MV:19}.
Speculating that such graphs do exist, this would via~\eqref{eigenvalue bound} imply that 
\begin{align}
\sp(m,K_r)=O\left(m^{1-\frac{1}{4+1/(r-2)}}\right).\label{clique surplus upper}
\end{align}
Perhaps this gives the correct answer.
Note that Conjecture~\ref{conj:degeneracy} would provide the matching lower bound as long as $d=O(n^{1-1/(2r-3)})$, that is, in the ``sparse'' case, when the density is smaller than that of the speculative extremal examples.
We prove a result that complements this in the ``dense'' regime, analogous to Alon's $\Omega(d^2)$ surplus for triangle-free graphs.

\begin{theorem}\label{thm:clique free simple}
For every fixed $r\ge 3$, any $K_r$-free $n$-vertex graph with average degree $d$ has surplus $\Omega_r\left(d^{r-1}/n^{r-3}\right)$.
\end{theorem}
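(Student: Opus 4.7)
The plan is to induct on $r\ge 3$. For the base case $r=3$, the statement reads $\sp(G)=\Omega(d^2)$ for every triangle-free graph $G$ with average degree $d$; this is Alon's classical theorem, and it is an immediate corollary of Theorem~\ref{thm:triangle simple}, whose hypothesis is trivially satisfied when the triangle count is zero.

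For $r\ge 4$, let $G$ be a $K_r$-free graph on $n$ vertices with average degree $d$, and suppose the theorem holds for $K_{r-1}$-free graphs. The central tool would be a random-extension lifting. For every vertex $v$, the subgraph $G[N(v)]$ is $K_{r-1}$-free; by induction it has surplus at least $c_{r-1}d_v^{r-2}/n_v^{r-4}$, where $n_v=|N(v)|$ and $d_v$ is the average degree of $G[N(v)]$. Fix an optimal cut of $G[N(v)]$ and assign each vertex of $V(G)\setminus N(v)$ independently and uniformly at random to one of the two sides. In expectation, each edge outside $G[N(v)]$ is cut with probability exactly $1/2$, so there exists an extension in which the resulting cut of $G$ has surplus at least $\sp(G[N(v)])$. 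Hence $\sp(G)\ge\sp(G[N(v)])$ for every $v$, and the task reduces to finding a vertex $v$ with $d_v^{r-2}/n_v^{r-4}\ge c_r d^{r-1}/n^{r-3}$.

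To locate such a $v$, I would split into two cases according to the number of triangles $t(G)$. In the \emph{sparse triangle} case $t(G)\le(1-\eps)d^3/6$ (for a fixed small $\eps>0$), Theorem~\ref{thm:triangle simple} directly yields $\sp(G)=\Omega_\eps(d^2)$; since $G$ is $K_r$-free, Tur\'an's theorem forces $d\le(r-2)n/(r-1)$, so $d^2\ge d^{r-1}/n^{r-3}$ and this case is finished. In the \emph{dense triangle} case $t(G)>(1-\eps)d^3/6$, one must use the $K_r$-free structure more carefully, exploiting that for \emph{every} $(r-3)$-clique $K$ of $G$ the common neighbourhood $G[N(K)]$ is triangle-free, so Alon's base-case bound $\sp(G[N(K)])=\Omega(d_K^2)$ applies. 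A carefully chosen averaging -- weighted over $(r-3)$-cliques rather than single vertices, and tuned so that aggregating $\sum_K d_K$ (which equals $(r-1)K_{r-1}(G)/K_{r-2}(G)$ up to constants) matches the target exponent -- should produce a $K$ for which the random-extension lifting closes the induction.

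The main obstacle is the dense triangle case. Naive vertex-averaging using $\max_v t_v\ge\Omega(d^3/n)$ combined with $d(v)\approx d$ only gives $d_v\ge\Omega(d^2/n)$, whereas closing the induction requires $d_v\gtrsim d^{(r-1)/(r-2)}/n^{1/(r-2)}$, and the gap is $(n/d)^{(r-3)/(2(r-2))}$, which grows with $r$. Bridging this gap is the technical heart of the proof: one has to propagate $K_r$-freeness through every intermediate clique level (an $(r-k)$-clique has a $K_{k-1}$-free common neighbourhood for each $k\le r-1$) and aggregate the corresponding lower bounds via the probabilistic lifting, so that after telescoping the final exponent is precisely $d^{r-1}/n^{r-3}$ rather than a weaker expression.
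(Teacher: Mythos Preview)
Your outline has the right induction skeleton and correctly isolates the crux, but the resolution you sketch for the dense-triangle case does not work, and the missing ingredient is precisely what makes the paper's argument succeed.

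Looking at a \emph{single} neighbourhood $G[N(v)]$ (or the common neighbourhood of any fixed clique) can never recover the full bound. With $t(G)\approx d^3/6$, a typical $v$ has $e(G[N(v)])\approx d^3/(2n)$, so $d_v\approx d^2/n$; the induction then yields $\sp(G[N(v)])\approx (d^2/n)^{r-2}/d^{r-4}=d^{r}/n^{r-2}$, which is short of the target $d^{r-1}/n^{r-3}$ by exactly a factor $n/d$. Your proposed fix of averaging over $(r-3)$-cliques does not bypass this: for $r=4$ an $(r-3)$-clique is a single vertex, so your ``improved'' scheme \emph{is} the naive scheme you already acknowledged fails. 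For larger $r$ the same $n/d$ deficit persists, because the bound $\sp(G)\ge\sp(G[S])$ for a single induced subgraph $S$ simply throws away all edges outside~$S$.

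The paper recovers the missing factor $n/d$ by exploiting \emph{additivity of surplus over disjoint induced subgraphs} (Lemma~\ref{lemma:surplus additive}): after regularising so that $\Delta(G)\le Cd$ via Lemma~\ref{lemma:regularize tight}, one samples $k\approx \eps n/(Cd)$ random vertices $v_1,\dots,v_k$ and sets $A_i=N(v_i)\setminus\bigcup_{j\ne i}N(v_j)$. Each $G[A_i]$ is $K_{r-1}$-free (in fact, has few $K_{r-1}$'s in the stronger Theorem~\ref{thm:clique optimal}), so by induction its expected surplus is $\Omega_r(d^r/n^{r-2})$; summing $k\approx n/d$ of these disjoint contributions yields $\Omega_r(d^{r-1}/n^{r-3})$. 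A second essential point you are missing is how to control the expectation of the nonlinear quantity $d_v^{r-2}/n_v^{r-4}$: the paper uses H\"older's inequality in the form
\[
\expn{X^{r-2}/|A|^{2r-6}}\;\ge\;\expn{X}^{r-2}\big/\expn{|A|^2}^{r-3},
\]
where $X=e(G[A_i])$, and this is why the sharper Theorem~\ref{thm:triangle square threshold} (with threshold $\frac{d}{6n}\sum_v d(v)^2$ rather than $d^3/6$) is needed in the sparse-triangle case, so that in the dense-triangle case one controls $\expn{|A|^2}\le\frac{1}{n}\sum_v d(v)^2$ rather than just $\expn{|A|}$.
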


For $d=\Omega(n^{1-1/(2r-3)})$, the implied surplus is at least $\Omega_r\left((nd)^{1-\frac{1}{4+1/(r-2)}}\right)$. Hence,
if Conjecture~\ref{conj:degeneracy} is true, then combining it with Theorem~\ref{thm:clique free simple} similar as in Alon's result, we could deduce that indeed $\sp(m,K_r)=\Omega_r\left(m^{1-\frac{1}{4+1/(r-2)}}\right)$, matching the speculative upper bound in~\eqref{clique surplus upper}. Using Theorem~\ref{thm:clique degeneracy} instead of Conjecture~\ref{conj:degeneracy}, we still improve the current best known lower bound $\sp(m,K_r)=\Omega_r(m^{\frac{1}{2}+\frac{1}{4r-8}})$, due to Carlson et al.~\cite{CKLMST:18}.

\begin{theorem} \label{thm:clique free in terms of m}
    For every $r\geq 3$, any $K_r$-free graph with $m$ edges has surplus $\Omega_r(m^{\frac{1}{2}+\frac{3}{4r-2}})$.
\end{theorem}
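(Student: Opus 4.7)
The plan is a dichotomy on degeneracy that balances the two preceding theorems. Fix a parameter $D>0$, to be chosen of order $m^{(r-1)/(2r-1)}$, and ask whether the $D$-core of $G$, i.e.\ its maximal induced subgraph of minimum degree at least $D$, is empty.

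If the $D$-core is empty, then $G$ is $D$-degenerate and Theorem~\ref{thm:clique degeneracy} immediately yields $\sp(G)=\Omega_r(m/D^{(r-2)/(r-1)})$. If instead the $D$-core is a nonempty induced subgraph $G'$ of $G$, I first observe that $\sp(G)\ge\sp(G')$, by a standard random-extension argument: start from an optimal cut of $G'$ and place each vertex of $V(G)\setminus V(G')$ independently and uniformly into one of the two parts; since $G'$ is \emph{induced}, every edge of $E(G)\setminus E(G')$ has at least one endpoint outside $V(G')$ and is therefore cut with probability $\tfrac{1}{2}$, so the expected total cut is at least $m/2+\sp(G')$. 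It is important that $G'$ is induced here, since $\sp$ is not monotone under arbitrary subgraph containment.

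Now $G'$ is $K_r$-free with $n':=|V(G')|$ vertices, average degree $d'\ge D$, and $n'\le 2|E(G')|/D\le 2m/D$. Applying Theorem~\ref{thm:clique free simple} and substituting these bounds gives
\[\sp(G)\ge\sp(G')=\Omega_r\!\left(\frac{(d')^{r-1}}{(n')^{r-3}}\right)=\Omega_r\!\left(\frac{D^{2r-4}}{m^{r-3}}\right).\]
Equating this with the degenerate bound $m/D^{(r-2)/(r-1)}$ pins down the choice of $D$ of order $m^{(r-1)/(2r-1)}$, for which both expressions evaluate to $\Omega_r(m^{(r+1)/(2r-1)})$. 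Finally, the arithmetic identity $(r+1)/(2r-1)=1/2+3/(4r-2)$ yields the stated exponent. The argument is thus a routine optimisation of two ingredients already in hand; the only point requiring care is to set up the dense case via the $D$-core (an induced object) so that surplus monotonicity applies, rather than via an arbitrary subgraph of large average degree.
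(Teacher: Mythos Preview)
Your proof is correct and is essentially the same as the paper's argument: both set the degeneracy threshold at $D=m^{(r-1)/(2r-1)}$, apply Theorem~\ref{thm:clique degeneracy} when $G$ is $D$-degenerate, and otherwise pass to an induced subgraph of minimum degree at least $D$ and apply Theorem~\ref{thm:clique free simple}. The only cosmetic difference is that the paper packages the proofs of Theorems~\ref{thm:clique degeneracy} and~\ref{thm:clique free in terms of m} together as a joint induction on $r$ (since Theorem~\ref{thm:clique degeneracy} for $r$ is itself derived from Theorem~\ref{thm:clique free in terms of m} for $r-1$ via \cite{CKLMST:18}), whereas you treat Theorem~\ref{thm:clique degeneracy} as already established and simply combine the two bounds.
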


In fact, we will establish Theorem~\ref{thm:clique free simple} in the stronger form that it is sufficient that the number of $r$-cliques is smaller than in a random graph of the same density. This generalizes Theorem~\ref{thm:triangle simple}.

\begin{theorem}\label{thm:clique optimal}
    For any integer $r\geq 3$ and any $\eps>0$, there exists a positive constant $c=c(r,\eps)$ such that the following holds. If $G$ is a graph on $n$ vertices with average degree $d$ which has at most $(1-\eps)\frac{n^r}{r!}(d/n)^{\binom{r}{2}}$ copies of $K_r$, then $G$ has surplus at least $cd^{r-1}/n^{r-3}$.
\end{theorem}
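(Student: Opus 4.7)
My plan is to use the probabilistic SDP-rounding framework of Section~\ref{subsec:intro triangles} (the same approach used for Theorem~\ref{thm:triangle simple}), now with lifted vectors that encode the $(r{-}1)$-clique structure. First I would reduce to the case of a nearly $d$-regular subgraph $G'\subseteq G$ of minimum degree $\Theta(d)$ in which the $K_r$-deficit is still a constant fraction of the random-graph expectation---such a $G'$ can be extracted from $G$ by a standard peeling argument, at a cost of only constant factors. Since $\sp(G)\geq\sp(G')$ (by extending a good cut of $G'$ to $V(G)$ arbitrarily) we may work with $G'$ in what follows.

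Next, assign to each $v\in V(G')$ a unit vector $x_v$ built from the ``neighbourhood-clique'' vector $y_v\in\mathbb{R}^{\binom{V}{r-2}}$ defined by $y_v(S)=\mathbf{1}[S\cup\{v\}\text{ spans }K_{r-1}]$, centred by the vertex average $\bar y=n^{-1}\sum_v y_v$ and normalised (with auxiliary orthogonal coordinates added if needed so that $\|x_v\|=1$). The key point is that for every edge $uv$,
\[
\langle y_u,y_v\rangle \;=\; \bigl|\{S\in\tbinom{V}{r-2}:\, S\cup\{u,v\}\text{ spans }K_r\}\bigr|,
\]
so $\sum_{uv\in E(G')}\langle y_u,y_v\rangle=\binom{r}{2}|K_r(G')|$. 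A direct calculation using the $d$-regularity of $G'$ yields
$\sum_{uv\in E(G')}\langle y_u-\bar y,\,y_v-\bar y\rangle = \binom{r}{2}|K_r(G')|-m\|\bar y\|^2$, and in $G(n,d/n)$ these two terms are equal in expectation, so the clique-deficit hypothesis forces this signed sum to be at most $-\Omega_{r,\eps}\bigl(d^{\binom{r}{2}}n^{r-\binom{r}{2}}\bigr)$. Applying random-hyperplane (Gaussian) rounding to the $x_v$'s and using $\arcsin(t)=t+O(t^3)$ together with the typical norm $\|y_v-\bar y\|^2\sim d^{r-2}/(r-2)!$, the linear term of the expansion produces an expected surplus of order $d^{r-1}/n^{r-3}$, which is exactly the claimed bound.

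Two main technical obstacles remain. First, the higher-order terms $\sum_{uv\in E(G')}\langle x_u,x_v\rangle^k$ for $k\geq 3$ in the $\arcsin$-expansion must be controlled; these reduce to counts of homomorphisms of small subgraphs into $G'$ and can be bounded via Cauchy--Schwarz, with the regularity of $G'$ keeping the constants uniform. Second, and more delicately, the quantity $\|\bar y\|^2$ depends on the $K_{r-1}$-count of $G'$, which the hypothesis does not directly constrain; this is handled by the dichotomy that any \emph{excess} of $K_{r-1}$'s over the random expectation merely strengthens the centring (and hence the surplus bound), while a \emph{deficit} of $K_{r-1}$'s allows running the same argument with $r$ replaced by $r-1$ to obtain a stronger bound, and by an additional trimming step ensuring that the individual normalisations $\|y_v-\bar y\|$ are all within a constant factor of the average. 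I expect the remaining details to parallel the template used for the $r=3$ case (Theorem~\ref{thm:triangle simple}).
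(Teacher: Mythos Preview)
Your approach is genuinely different from the paper's. The paper argues by induction on $r$: after regularising, either $G$ has fewer than $(1-\eps')\frac{d}{6n}\sum_v d(v)^2$ triangles, in which case Theorem~\ref{thm:triangle square threshold} already gives surplus $\Omega(d^2)\ge d^{r-1}/n^{r-3}$; or $G$ has many triangles, in which case one samples $k=\Theta_\eps(n/d)$ random vertices $v_1,\dots,v_k$, sets $A_i=N(v_i)\setminus\bigcup_{j\neq i}N(v_j)$, and applies the induction hypothesis to each $G[A_i]$ (which has about $d^2/n$ average degree and inherits a $K_{r-1}$-deficit from the $K_r$-deficit of $G$). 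The surpluses are summed via Lemma~\ref{lemma:surplus additive}. No SDP is used beyond the base case machinery; the ``clique structure'' is unwound one neighbourhood at a time rather than encoded globally in the vectors.

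Your SDP proposal has a real gap in the ``excess of $K_{r-1}$'' branch of the dichotomy. The quantity you need to lower-bound is $m\|\bar y\|^2=\frac{d}{2n}\sum_{S\in\binom{V}{r-2}}\mathrm{codeg}(S)^2$, where $\mathrm{codeg}(S)$ is the number of $v$ with $S\cup\{v\}$ spanning a $K_{r-1}$. This is \emph{not} a monotone function of $|K_{r-1}(G')|$: one only has $\sum_S\mathrm{codeg}(S)=(r-1)|K_{r-1}|$, and Cauchy--Schwarz gives $\sum_S\mathrm{codeg}(S)^2\ge ((r-1)|K_{r-1}|)^2/|K_{r-2}|$, which matches the random-graph value only if $|K_{r-2}|$ is at most (roughly) its random value. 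So an excess of $K_{r-1}$'s does not by itself ``strengthen the centring''; you also need control on $|K_{r-2}|$, which cascades down. If you try to close this by another dichotomy on $|K_{r-2}|$, you are effectively rebuilding an induction on $r$ anyway, and it is not clear it terminates cleanly in the form you sketched. The normalisation issue is also more serious than suggested: $\|y_v\|^2$ equals the number of $K_{r-2}$'s in $N(v)$, which can vanish on a large fraction of vertices even in a $d$-regular graph, and padding with auxiliary coordinates then kills the very inner products you need. Finally, note that the paper's proof of Theorem~\ref{thm:triangle simple} itself is a neighbourhood-sampling argument (Theorem~\ref{thm:triangle square threshold}), not an SDP rounding, so the template you are appealing to for $r=3$ is not the one you describe.
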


Finally, we remark that Theorem~\ref{thm:clique free simple} generalizes a result of Nikiforov~\cite{nikiforov:06}. He proved that the smallest eigenvalue of a $K_r$-free $n$-vertex graph with average degree $d$ has absolute value $\Omega(d^{r-1}/n^{r-2})$. This also follows from Theorem~\ref{thm:clique free simple} and~\eqref{eigenvalue bound}. However, a lower bound on MaxCut is stronger than a bound on $\lambda_{\rm{min}}$ since the converse of~\eqref{eigenvalue bound} is not true.
Theorem~\ref{thm:clique optimal} shows that the conclusion of Nikiforov's theorem remains true even if we only require that there are ``few'' copies of~$K_r$.

\vspace{0.3cm}
\noindent
{\bf Notation:} We use standard notation. All graphs considered are finite, simple and undirected. We write $|G|$ for the order of $G$, $e(G)$ for the number of edges, and $t(G)$ for the number of triangles in~$G$. The minimum and maximum degree of $G$ are denoted by $\delta(G)$ and $\Delta(G)$, respectively. For a set $S\subset V(G)$, we write $e_G(S)$ for the number of edges in $G[S]$, and for disjoint sets $S,T\subset V(G)$, we write $e_G(S,T)$ for the number of edges in $G$ with one endpoint in $S$ and one in $T$. $N_G(v)$ and $d_G(v)$ stand for the neighbourhood and degree of a vertex $v$. The number of common neighbours of vertices $v_1,\dots,v_r$ is denoted $d_G(v_1,\dots,v_r)$. 
We occasionally omit subscripts if they are clear from the context.
We let $P_n$ and $C_n$ denote the path and cycle, respectively, with $n$ edges.

When using Landau symbols, $\Omega_r(\cdot),\Theta_r(\cdot),O_r(\cdot)$ mean that the implicit constants may depend on~$r$.
We denote by $\|\cdot\|$ the Euclidean norm.
As customary, we tacitly treat large numbers like integers whenever this has no effect on the argument.

The rest of this paper is organized as follows. In Section~\ref{sec:illustration}, we introduce some of the main tools utilised in our proofs, and use them to prove a few of our less technical results. In particular, we first prove Theorem~\ref{thm:triangledependence} in the special case where $G$ is strongly regular. Also, we present the proof of Theorem~\ref{thm:triangle simple}. We conclude the section with the proof of Theorem~\ref{thm:odd cycles} for the simplest new case $C_5$ (the case $r=3$ is already covered by Alon's theorem) under the assumption that $G$ is regular.
In Section~\ref{sec:odd cycles}, we prove Theorem~\ref{thm:odd cycles} in the general case.
We give the full proof of Theorem~\ref{thm:triangledependence} in Section~\ref{sec:few triangles}.
Finally, we prove Theorems~\ref{thm:clique degeneracy},~\ref{thm:clique free in terms of m} and~\ref{thm:clique optimal} in Section~\ref{sec:cliques}.

\section{Main ideas and first applications} \label{sec:illustration}

\subsection{Preliminaries}

We start with a well known observation that we will repeatedly use in our proofs. In order to show that a graph has large surplus, it suffices to find many disjoint induced subgraphs with relatively large surplus.

\begin{lemma} \label{lemma:surplus additive}
    Let $G$ be a graph and let $S_1,S_2,\dots,S_k$ be pairwise disjoint subsets of $V(G)$. Then $\sp(G)\geq \sum_{i=1}^k \sp(G[S_i])$.
\end{lemma}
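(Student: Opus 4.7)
The plan is a standard probabilistic combination of local cuts with a random extension to the rest of the graph. For each $i\in\{1,\dots,k\}$, fix an optimal cut $(A_i,B_i)$ of the induced subgraph $G[S_i]$, so that the number of edges of $G[S_i]$ across this partition is exactly $e(G[S_i])/2 + \sp(G[S_i])$. Let $T = V(G)\setminus \bigcup_{i=1}^k S_i$ denote the set of ``leftover'' vertices.

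Next, I would construct a random cut $(A,B)$ of $G$ as follows. Independently, (a) for each $v\in T$, place $v$ into $A$ or $B$ with probability $1/2$ each; and (b) for each $i$, with probability $1/2$ set $(A_i,B_i)\to(A,B)$ and otherwise set $(B_i,A_i)\to(A,B)$. I would then compute the expected cut size edge by edge, splitting $E(G)$ into four types: edges inside $T$, edges inside some $S_i$, edges between $T$ and some $S_i$, and edges between distinct $S_i$ and $S_j$. Each edge of the first, third, or fourth type is cut with probability exactly $1/2$ (by independence of the coin flips affecting its endpoints). An edge of the second type inside $S_i$ is cut precisely when it lies across $(A_i,B_i)$, independently of the sign flip, contributing exactly $e(G[S_i])/2+\sp(G[S_i])$ in expectation per $i$.

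Summing these contributions gives
\[
\mathbb{E}[e(A,B)] \;=\; \frac{1}{2}\Bigl(e(G)-\sum_{i=1}^k e(G[S_i])\Bigr) + \sum_{i=1}^k \Bigl(\tfrac{1}{2}e(G[S_i]) + \sp(G[S_i])\Bigr) \;=\; \frac{e(G)}{2} + \sum_{i=1}^k \sp(G[S_i]).
\]
Hence some realisation of $(A,B)$ achieves at least this value, so $\mc(G)\ge e(G)/2 + \sum_i \sp(G[S_i])$, which is the claimed inequality.

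There is no serious obstacle here; the only thing to be careful about is to ensure that the random sign flips on different $S_i$'s and the random placements on $T$ are mutually independent, so that each edge not lying inside a single $S_i$ is genuinely cut with probability $1/2$. Once this is set up correctly, the expectation calculation is immediate.
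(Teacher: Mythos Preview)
Your proof is correct and follows essentially the same approach as the paper: take an optimal cut of each $G[S_i]$, randomly flip each onto the two sides of a global cut, and place the remaining vertices uniformly at random. The paper phrases the leftover step slightly differently (adding singletons so that the $S_i$ partition $V(G)$), but this is equivalent to your treatment of $T$.
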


Variants of this have been used in many preceding papers (see e.g.~\cite[Lemma~3.2]{AKS:05}).
The proof is similar to the standard $m/2$ bound for $\mc(G)$. Note that by adding singletons, one can assume that the $S_i$'s partition $V(G)$. Take an optimal cut of each of the induced subgraphs, and form a cut of $G$ by putting the sets of the smaller cuts randomly on either side. The result then follows by inspecting the expected size of the obtained cut. We omit the details.

Note that in the simplest case, the lemma says that the surplus of a graph is at least as large as the surplus of any of its induced subgraphs. We will use this fact naturally in our proofs without necessarily referring to the lemma.

\subsection{Semidefinite programming} \label{subsec:SDP}

Semidefinite programming in the algorithmic context of MaxCut was first used by Goemans and Williamson~\cite{GW:95}. The method was exploited by Carlson et al.~\cite{CKLMST:18} to give lower bounds on the MaxCut for $H$-free graphs. The approach can be summarised as follows. Given a graph $G$, we try to assign a unit vector $\vx^v\in \mathbb{R}^N$ to each vertex $v\in V(G)$ in a way that the inner products $\langle \vx^u,\vx^v \rangle$ are negative on average over all adjacent pairs of vertices $u,v$. Then we take a random unit vector $\vz\in \mathbb{R}^N$ and define $A=\{v\in V(G): \langle \vx^v,\vz \rangle\geq 0\}$, $B=V(G)\setminus A$. Consider the random cut $(A,B)$. If vertices $u$ and $v$ have $\langle \vx^u,\vx^v\rangle<0$, then with probability more than $1/2$, $u$ and $v$ will end up in different parts of the cut. Hence, intuitively, we expect our cut to contain more than half of the edges. This is made precise by the following lemma, essentially due to Goemans and Williamson \cite{GW:95}.

\begin{lemma} \label{lemma:general SDP}
    Let $G$ be a graph and let $N$ be a positive integer. Then, for any set of non-zero vectors $\{\vx^v:v\in V(G)\}\subset \mathbb{R}^N$, we have $\sp(G)\geq -\frac{1}{\pi}\sum_{uv\in E(G)} \arcsin\left(\frac{\langle \vx^u,\vx^v \rangle}{\|\vx^u\|\|\vx^v\|}\right)$.
\end{lemma}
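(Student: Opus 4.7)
My plan is to use the classical random hyperplane rounding of Goemans and Williamson~\cite{GW:95}. First I would reduce to the unit-vector case by replacing each $\vx^v$ with $\vx^v/\|\vx^v\|$, which leaves the quotients $\langle \vx^u,\vx^v\rangle/(\|\vx^u\|\|\vx^v\|)$ unchanged. Write $\rho_{uv}$ for this (now ordinary) inner product and $\theta_{uv}=\arccos(\rho_{uv})\in[0,\pi]$ for the angle between $\vx^u$ and $\vx^v$, and set $m=e(G)$.

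Next I would sample a uniformly random unit vector $\vz\in\mathbb{R}^N$ and form the random cut $(A,B)$ with $A=\{v:\langle \vx^v,\vz\rangle\geq 0\}$ and $B=V(G)\setminus A$. The heart of the argument is the classical fact that for every edge $uv$,
\[
\mathbb{P}[u\text{ and }v\text{ are separated by }(A,B)] \;=\; \frac{\theta_{uv}}{\pi}.
\]
This reduces to a two-dimensional computation: the signs of $\langle \vz,\vx^u\rangle$ and $\langle \vz,\vx^v\rangle$ depend only on the projection of $\vz$ onto the plane spanned by $\vx^u$ and $\vx^v$, and by rotation invariance this projection, once normalised, is uniform on the unit circle in that plane. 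In the circle, the set of directions that lie in different closed half-planes determined by $\vx^u$ and $\vx^v$ consists of two arcs of total angular measure $2\theta_{uv}$, giving the claimed probability. The measure-zero event that the projection vanishes (or that $\vz$ is orthogonal to some $\vx^v$) can be ignored, or handled by a trivial tiebreaking convention.

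Summing over edges, the expected number of cut edges equals $\frac{1}{\pi}\sum_{uv\in E(G)}\theta_{uv}$, so the expected surplus of $(A,B)$ is
\[
\sum_{uv\in E(G)}\left(\frac{\theta_{uv}}{\pi}-\frac{1}{2}\right)
\;=\; \frac{1}{\pi}\sum_{uv\in E(G)}\left(\arccos(\rho_{uv})-\frac{\pi}{2}\right)
\;=\; -\frac{1}{\pi}\sum_{uv\in E(G)}\arcsin(\rho_{uv}),
\]
using the identity $\arccos(x)=\pi/2-\arcsin(x)$. Since some realisation of the random cut must achieve at least this expectation, the lemma follows. There is no real obstacle: the only step that needs care is the two-dimensional angular computation, and this is completely standard.
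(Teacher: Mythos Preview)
Your proof is correct and follows essentially the same random-hyperplane rounding argument as the paper: sample a uniform unit vector $\vz$, form the cut according to the sign of $\langle \vx^v,\vz\rangle$, use that each edge $uv$ is cut with probability $\arccos(\rho_{uv})/\pi$, and convert via $\arccos(x)=\pi/2-\arcsin(x)$. The only cosmetic differences are your explicit normalisation step and the extra detail you give on the two-dimensional angular computation.
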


\begin{proof}
Let $\vz$ be a uniformly random unit vector in $\mathbb{R}^N$, let $A=\{v\in V(G): \langle \vx^v,\vz \rangle\geq 0\}$ and let $B=V(G)\setminus A$. The probability that $uv$ belongs to the cut $(A,B)$ is $\frac{\alpha}{\pi}$, where $\alpha=\arccos\left(\frac{\langle \vx^u,\vx^v \rangle}{\|\vx^u\|\|\vx^v\|}\right)$ is the angle between $\vx^u$ and $\vx^v$. Hence,
$$\bP(uv \text{ belongs to the cut})=\frac{1}{\pi}\arccos\left(\frac{\langle \vx^u,\vx^v \rangle}{\|\vx^u\|\|\vx^v\|}\right)=\frac{1}{2}-\frac{1}{\pi}\arcsin\left(\frac{\langle \vx^u,\vx^v \rangle}{\|\vx^u\|\|\vx^v\|}\right).$$
Summing the equality over all edges $uv\in E(G)$ shows that the expected number of edges in the cut $(A,B)$ is $\frac{e(G)}{2}-\frac{1}{\pi}\sum_{uv\in E(G)} \arcsin\left(\frac{\langle \vx^u, \vx^v \rangle}{\|\vx^u\|\|\vx^v\|}\right)$, from which the result follows.
\end{proof}

	In some cases, we can use a more convenient form of the above, which is implicit in the work of Carlson et al.~\cite{CKLMST:18}.
	
	\begin{cor} \label{cor:general SDP}
	Let $G$ be a graph and let $N$ be a positive integer. Let $\{\vx^v:v\in V(G)\}\subset \mathbb{R}^N$ be vectors with $\|\vx^v\|=O(1)$, and suppose that for every edge $uv\in E(G)$, we have $\langle \vx^u,\vx^v \rangle \le -a_{uv}+b_{uv}$ for some $a_{uv},b_{uv}\ge 0$.
    Then
	$$\sp(G)\geq \Omega\left(\sum_{uv\in E(G)}a_{uv}\right) -  O\left(\sum_{uv\in E(G)}b_{uv}\right).$$
	\end{cor}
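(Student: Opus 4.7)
The plan is to deduce the corollary directly from Lemma~\ref{lemma:general SDP} using two elementary linear upper bounds on $\arcsin$, combined with a case split on the sign of the normalised inner product. Setting $c_{uv}:=\langle \vx^u,\vx^v\rangle/(\|\vx^u\|\|\vx^v\|)\in[-1,1]$ and letting $M=O(1)$ be an upper bound on $\|\vx^v\|$, Lemma~\ref{lemma:general SDP} gives
\[
\sp(G)\;\ge\;-\frac{1}{\pi}\sum_{uv\in E(G)}\arcsin(c_{uv}),
\]
so the task reduces to proving an edge-wise lower bound of the form $-\arcsin(c_{uv})\ge C_1 a_{uv}-C_2 b_{uv}$ for positive constants $C_1,C_2$.

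The analytic input is the pair of inequalities $\arcsin(x)\le x$ for $x\in[-1,0]$ and $\arcsin(x)\le \tfrac{\pi}{2}x$ for $x\in[0,1]$: on $[0,1]$, $\arcsin$ is convex and hence lies below the chord joining $(0,0)$ to $(1,\pi/2)$, which gives the second inequality, while the first follows from oddness. For an edge with $\langle \vx^u,\vx^v\rangle\le 0$ we have $c_{uv}\le 0$, and the first bound combined with $\|\vx^u\|\|\vx^v\|\le M^2$ gives
\[
-\arcsin(c_{uv})\;\ge\;-c_{uv}\;=\;\frac{-\langle \vx^u,\vx^v\rangle}{\|\vx^u\|\|\vx^v\|}\;\ge\;\frac{a_{uv}-b_{uv}}{M^2}.
\]
For an edge with $\langle \vx^u,\vx^v\rangle>0$, the hypothesis forces $b_{uv}>a_{uv}$, and the second bound yields
\[
-\arcsin(c_{uv})\;\ge\;-\frac{\pi}{2}c_{uv}\;\ge\;-\frac{\pi}{2}\cdot\frac{b_{uv}}{\|\vx^u\|\|\vx^v\|}\;=\;-O(b_{uv}),
\]
where I use the convention that $\|\vx^u\|\|\vx^v\|=\Theta(1)$ (typically $\vx^v$ are unit vectors), which is the regime in which the corollary is meant to be applied; otherwise one may first rescale the $\vx^v$ and absorb the rescaling into the constants. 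Note also that in this second regime $(a_{uv}-b_{uv})/M^2\le 0$, so the $c_{uv}\le 0$ estimate remains trivially consistent with this bound.

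Combining the two cases, on every edge $-\arcsin(c_{uv})\ge C_1 a_{uv}-C_2 b_{uv}$ for constants depending only on the implicit norm bounds. Summing over $uv\in E(G)$ and dividing by $\pi$ yields the desired $\sp(G)\ge \Omega\bigl(\sum_{uv}a_{uv}\bigr)-O\bigl(\sum_{uv}b_{uv}\bigr)$. The only mildly subtle point is the lower bound on the norm product needed in the $\langle \vx^u,\vx^v\rangle>0$ case; apart from that, the argument is just Lemma~\ref{lemma:general SDP} together with a one-line convexity estimate and a sign case split.
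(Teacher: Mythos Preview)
Your approach is essentially the same as the paper's---apply Lemma~\ref{lemma:general SDP} and bound $\arcsin$ linearly on each sign range---but there is a genuine gap in the positive-inner-product case. The hypothesis only says $\|\vx^v\|=O(1)$; there is no lower bound on the norms. If $\|\vx^u\|\|\vx^v\|$ is tiny, then $c_{uv}$ can be close to $1$ (so $-\arcsin(c_{uv})\approx -\pi/2$) while $b_{uv}\ge \langle \vx^u,\vx^v\rangle$ is correspondingly tiny; thus $-\arcsin(c_{uv})\ge -O(b_{uv})$ can fail badly. Your suggested fix of ``rescaling the $\vx^v$'' does not work: normalising each vector replaces $\langle \vx^u,\vx^v\rangle$ by $\langle \vx^u,\vx^v\rangle/(\|\vx^u\|\|\vx^v\|)$, and the assumption $\langle \vx^u,\vx^v\rangle\le -a_{uv}+b_{uv}$ no longer holds with the same $a_{uv},b_{uv}$.

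The paper repairs exactly this point with a one-line trick: append an extra coordinate for each vertex $v$, setting $\vy^v_v=1$ and all other new coordinates of $\vy^v$ to $0$. Then $\|\vy^v\|^2=\|\vx^v\|^2+1=\Theta(1)$ (in particular $\ge 1$), while for $u\ne v$ the inner product is unchanged, $\langle \vy^u,\vy^v\rangle=\langle \vx^u,\vx^v\rangle\le -a_{uv}+b_{uv}$. With norms now bounded below by $1$, your case split goes through verbatim: for $c_{uv}\le 0$ one gets $-\arcsin(c_{uv})\ge -c_{uv}\ge (a_{uv}-b_{uv})/M^2$, and for $c_{uv}>0$ one gets $-\arcsin(c_{uv})\ge -\tfrac{\pi}{2}c_{uv}\ge -\tfrac{\pi}{2}b_{uv}$, which combined with $a_{uv}\le b_{uv}$ yields the desired edgewise bound. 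So the missing ingredient is precisely this coordinate-padding step; once you add it, your argument is complete and matches the paper's.
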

	
	\begin{proof}
	We add extra coordinates, one for each vertex $v\in V(G)$. The vector $\vy^v$ is obtained from $\vx^v$ by setting the new coordinate for $v$ to $1$ and all other new coordinates to~$0$.
	Hence, we have $\|\vy^v\|^2=\|\vx^v\|^2+1 = \Theta(1)$ for all $v\in V(G)$, and 
	$\langle \vy^u,\vy^v \rangle = \langle \vx^u,\vx^v \rangle \le -a_{uv}+b_{uv}$ for all $uv\in E(G)$.
	
	For any $x\in[-1,1]$ with $x\le b-a$ for some $a,b\ge 0$, we have $\arcsin(x)\le \frac{\pi}{2}b-a$. This is because when $x<0$, then $\arcsin(x)\le x \le b-a \le \frac{\pi}{2}b-a$, and if $x\ge 0$, then $\arcsin(x)\le \frac{\pi}{2}x\le \frac{\pi}{2}(b-a) \le \frac{\pi}{2}b-a$.
	Hence, for every edge $uv\in E(G)$, we have 
	$$\arcsin\left(\frac{\langle \vy^u,\vy^v \rangle}{\|\vy^u\|\|\vy^v\|}\right) \le \frac{\pi}{2}\frac{b_{uv}}{\|\vy^u\|\|\vy^v\|} - \frac{a_{uv}}{\|\vy^u\|\|\vy^v\|} = -\Omega(a_{uv}) + O(b_{uv}).$$ Summing over all edges and using Lemma~\ref{lemma:general SDP} gives the result.
	\end{proof}

\subsection{Surplus in strongly regular graphs}

As a first illustration of the use of semidefinite programming, we determine the order of magnitude of the surplus in any strongly regular graph. Recall that a graph is called \defn{strongly regular} if in addition to being regular, there exist $\eta,\mu$ such that any two adjacent vertices have exactly $\eta$ common neighbours and any two distinct non-adjacent vertices have exactly $\mu$ common neighbours.

In Section~\ref{sec:few triangles}, we will use a more involved argument to generalize the lower bound to every regular graph, proving Theorem~\ref{thm:triangledependence}.

\begin{theorem} \label{thm:strongly regular surplus}
    Let $G$ be a strongly regular graph with $n$ vertices, degree $d$ and $d^3/6+s$ triangles, where $d\leq 0.99n$.
    \begin{itemize}
        \item If $s<-nd^{3/2}$, then $\sp(G)=\Theta(\frac{|s|}{d})$.
        \item If $-nd^{3/2}\leq s \leq nd^{3/2}$, then  $\sp(G)=\Theta(nd^{1/2})$.
        \item If $s>nd^{3/2}$, then $\sp(G)=\Theta(\frac{n^2d^2}{s})$.
    \end{itemize}
\end{theorem}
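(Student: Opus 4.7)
The plan is to match both the upper and lower surplus bounds with the smallest eigenvalue $\theta_2$ of $G$, and to evaluate $|\theta_2|$ in each of the three regimes. For a SRG with parameters $(n,d,\eta,\mu)$, the non-principal eigenvalues are the two roots $\theta_1\ge 0>\theta_2$ of $x^2-(\eta-\mu)x-(d-\mu)=0$. A double count of triangles yields $\eta=d^2/n+6s/(nd)$, while the standard identity $d(d-1-\eta)=(n-1-d)\mu$ determines $\mu$. Together these give
\[
\eta-\mu=\frac{\eta(n-1)-d(d-1)}{n-1-d}, \qquad d-\mu=\frac{d(n-2d+\eta)}{n-1-d},
\]
and $d\le 0.99n$ implies $d-\mu=\Theta(d)$ and $\eta-\mu=\Theta(d/n)+\Theta(s/(nd))$. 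Comparing the two terms under the square root in the quadratic formula splits the analysis into the three cases: $|\theta_2|=\Theta(\sqrt{d})$ when $|s|\le nd^{3/2}$; $|\theta_2|=\Theta(|s|/(nd))$ when $s<-nd^{3/2}$; and $|\theta_2|=\Theta(nd^2/s)$ when $s>nd^{3/2}$. Multiplied by $n/4$ these are exactly the three surplus magnitudes claimed.

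For the upper bound I would plug the above estimates directly into the spectral inequality $\sp(G)\le |\theta_2|n/4$ from \eqref{eigenvalue bound}.

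For the lower bound, I would apply Lemma~\ref{lemma:general SDP} with $\vx^v$ chosen as the $v$-th column of $P_2$, the orthogonal projection onto the $\theta_2$-eigenspace. Since a SRG is a two-class association scheme, $P_2$ lies in the algebra spanned by $I,A,J$, so $P_2=\alpha I+\beta A+\gamma(J-I-A)$ for some scalars; in particular $(P_2)_{vv}=\alpha$ for every vertex and $(P_2)_{uv}=\beta$ on every edge. Combining the spectral decomposition $A=\frac{d}{n}J+\theta_1 P_1+\theta_2 P_2$ with $P_0+P_1+P_2=I$ and solving the resulting $2\times 2$ system in the edge-entries of $P_1$ and $P_2$ yields
\[
\frac{\beta}{\alpha}=-\frac{n-d+\theta_1}{(n-1)\theta_1+d}.
\]
Because $P_2$ is idempotent, $\langle\vx^u,\vx^v\rangle/\|\vx^v\|^2=\beta/\alpha$ on every edge, so all edges contribute identically in Lemma~\ref{lemma:general SDP}. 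Using $|\arcsin(x)|\ge|x|$ and substituting the asymptotics of $\theta_1$ (the other root of the same quadratic, dual to $\theta_2$) in each regime, the lower bound produced by the lemma matches the upper bound up to constants.

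The main technical effort lies in the case analysis: one has to track which term dominates in $\eta-\mu$, in the discriminant $(\eta-\mu)^2+4(d-\mu)$, and in the numerator and denominator of $\beta/\alpha$, and verify the bounds at the boundaries $|s|\sim nd^{3/2}$ as well as in extreme corners (for example, when $\theta_1\gtrsim n$ for very large $s$, where the dominant terms in $\beta/\alpha$ change). Once this bookkeeping is done, the theorem follows immediately from \eqref{eigenvalue bound} and Lemma~\ref{lemma:general SDP}.
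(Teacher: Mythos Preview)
Your approach is correct and takes a genuinely different route from the paper's proof for the lower bound. Both arguments use the eigenvalue bound \eqref{eigenvalue bound} for the upper bound and the SDP Lemma~\ref{lemma:general SDP} for the lower bound, but the choice of vectors differs. The paper assigns to each vertex $v$ an explicit vector with entries $1+\gamma a$, $-\gamma/\sqrt{d}$, $\gamma a$ according to whether the coordinate equals $v$, lies in $N(v)$, or lies outside, and then tunes the free parameter~$\gamma$ separately in each of the three regimes. Your choice of the columns of the eigenprojection $P_2$ is more canonical for a strongly regular graph and avoids the tuning step entirely. In fact your formula simplifies further: since $\sum_{uv\in E}\langle \vx^u,\vx^v\rangle=\tfrac12\operatorname{tr}(AP_2)=\tfrac12\theta_2 m_2$ while $\|\vx^v\|^2=m_2/n$, one has $\beta/\alpha=\theta_2/d$ directly, so Lemma~\ref{lemma:general SDP} yields $\sp(G)\ge \tfrac{nd}{2\pi}\arcsin(|\theta_2|/d)\ge \tfrac{n|\theta_2|}{2\pi}$. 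Combined with \eqref{eigenvalue bound} this gives $\sp(G)=\Theta(n|\theta_2|)$ in one line, after which only the eigenvalue asymptotics remain. What the paper's construction buys in exchange is portability: the same vectors (and the inner-product identity \eqref{eqn:innerproduct}) are reused verbatim in the proof of Theorem~\ref{thm:triangledependence} for arbitrary regular graphs, where no eigenprojection with constant entries is available.

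One small inaccuracy to flag: the blanket claim ``$d\le 0.99n$ implies $d-\mu=\Theta(d)$'' is false as stated. Since $d-\mu=\dfrac{d(n-2d+\eta)}{n-1-d}$ and $n-2d+\eta=\dfrac{(n-d)^2}{n}+\dfrac{6s}{nd}$, the quantity $d-\mu$ can vanish when $s$ is sufficiently negative and $d\ge n/2$; for instance $K_{d,d}$ has $d-\mu=0$. This does not damage your proof, however: in the regime $s<-nd^{3/2}$ the term $(\eta-\mu)^2$ already dominates $4(d-\mu)$ regardless, and in the other two regimes one checks that $\dfrac{(n-d)^2}{n}=\Theta(n)$ dominates $\dfrac{6|s|}{nd}=O(\sqrt{d})$ (case~2) or has the same sign (case~3), so $d-\mu=\Theta(d)$ there. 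You should make this case distinction explicit rather than asserting $d-\mu=\Theta(d)$ globally.
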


\begin{proof}
Let us start with the lower bounds. Let $a=\frac{\sqrt{d}}{n-d}$ and let $0<\gamma\leq 1$. For every $v\in V(G)$, we define a vector $\vx^v$ whose coordinates are labelled by $V(G)$, i.e. $\vx^v\in \bR^{V(G)}$. Take
$$\vx^v_u=
\begin{cases}
1+\gamma a \text{ if } u=v, \\
-\gamma \frac{1}{\sqrt{d}} \text{ if } u\in N(v), \\
\gamma a \text{ otherwise.}
\end{cases}
$$
Then for any $uv\in E(G)$,
\begin{align*}
    \langle \vx^u,\vx^v\rangle
    &=-2(1+\gamma a)\gamma\frac{1}{\sqrt{d}}+|N(u)\cap N(v)|\gamma^2\frac{1}{d}+|V(G)\setminus (N(u)\cup N(v))|\gamma^2a^2 \\
    &+\bigg(|N(u)\setminus (N(v)\cup \{v\})|+|N(v)\setminus (N(u)\cup \{u\})|\bigg)\left(-\gamma^2\frac{a}{\sqrt{d}}\right).
\end{align*}
Observe that $|N(u)\cap N(v)|=d(u,v)$, $|V(G)\setminus (N(u)\cup N(v))|=n-2d+d(u,v)$ and $|N(u)\setminus (N(v)\cup \{v\})|=|N(v)\setminus (N(u)\cup \{u\})|=d-d(u,v)-1$, so
$$\langle \vx^u,\vx^v\rangle=-2\gamma\frac{1}{\sqrt{d}}+\gamma^2\left(\left(\frac{1}{d}+2\frac{a}{\sqrt{d}}+a^2\right)d(u,v)+(n-2d)a^2-2d\frac{a}{\sqrt{d}}\right).$$

Let $\delta(u,v)=d(u,v)-\frac{d^2}{n}$. Note that
$$\left(\frac{1}{d}+2\frac{a}{\sqrt{d}}+a^2\right)\frac{d^2}{n}+(n-2d)a^2-2d\frac{a}{\sqrt{d}}=\frac{1}{n}(d-2a\sqrt{d}(n-d)+a^2(n-d)^2)=\frac{1}{n}(d-2d+d)=0,$$
so
\begin{equation}
    \langle \vx^u,\vx^v\rangle=-2\gamma \frac{1}{\sqrt{d}}+\gamma^2 \left(\frac{1}{d}+2\frac{a}{\sqrt{d}}+a^2\right)\delta(u,v). \label{eqn:innerproduct}
\end{equation}
Note that so far we have not needed the assumption that $G$ is strongly regular, only that it is regular. We will in fact use (\ref{eqn:innerproduct}) also in the proof of Theorem \ref{thm:triangledependence}.

Since $G$ is strongly regular, every pair of adjacent vertices has the same number $\eta$ of common neighbours. Note that $e(G)\eta=3t(G)$, so \begin{equation}
    \eta=\frac{3t(G)}{e(G)}=\frac{3(d^3/6+s)}{nd/2}=\frac{d^2}{n}+\frac{6s}{nd}, \label{eqn:eta}
\end{equation}
hence $\delta(u,v)=\frac{6s}{nd}$ for every $uv\in E(G)$. Thus,
\begin{equation*}
    \langle \vx^u,\vx^v\rangle=-2\gamma \frac{1}{\sqrt{d}}+\gamma^2 \left(\frac{1}{d}+2\frac{a}{\sqrt{d}}+a^2\right)\frac{6s}{nd}.
\end{equation*}
Observe that since $d\leq 0.99n$, we have $a\leq \frac{100}{\sqrt{d}}$, so $ \frac{1}{d}\leq \frac{1}{d}+2\frac{a}{\sqrt{d}}+a^2\leq \frac{10^5}{d}$.

If $s<-nd^{3/2}$, then take $\gamma=1$ to get
$$\langle \vx^u,\vx^v\rangle\leq \frac{1}{d}\cdot\frac{6s}{nd}=\frac{6s}{nd^2}.$$

If $-nd^{3/2}\leq s\leq nd^{3/2}$, take $\gamma=1/10^6$ and note that $\gamma^2 \left(\frac{1}{d}+2\frac{a}{\sqrt{d}}+a^2\right)\frac{6s}{nd}\leq \gamma\frac{1}{\sqrt{d}}$, so
$$\langle \vx^u,\vx^v\rangle\leq -\gamma\frac{1}{\sqrt{d}}=-\frac{1}{10^6\sqrt{d}}.$$

Finally, if $s>nd^{3/2}$, take $\gamma=\frac{nd^{3/2}}{10^6s}$ and note that $\gamma^2 \left(\frac{1}{d}+2\frac{a}{\sqrt{d}}+a^2\right)\frac{6s}{nd}\leq \gamma\frac{1}{\sqrt{d}}$, so
$$\langle \vx^u,\vx^v\rangle\leq -\gamma\frac{1}{\sqrt{d}}=-\frac{nd}{10^6s}.$$

In all three cases, $\langle \vx^u,\vx^v\rangle<0$, and moreover
it is easy to see that $\|\vx^v\|=O(1)$ for every $v\in V(G)$, so by Corollary~\ref{cor:general SDP}, we have
$$\sp(G)\geq \Omega\left(\sum_{uv\in E(G)} -\langle \vx^u,\vx^v\rangle\right).$$
Summing the respective upper bounds for $\langle \vx^u,\vx^v\rangle$ in the three cases, we obtain the desired lower bounds for $\sp(G)$.

It remains to prove the upper bounds for $\sp(G)$. For this, we will use the eigenvalue bound (\ref{eigenvalue bound}). It is well known (see \cite{KS:06}, for example) that the smallest eigenvalue of a strongly regular graph is
\begin{equation*}
    \lambda_{\min} = \frac{1}{2}\left(\eta-\mu - \sqrt{(\eta-\mu)^2 + 4(d-\mu)} \right),
\end{equation*}
where $\eta$ is the number of common neighbours of adjacent pairs and $\mu$ is the number of common neighbours of non-adjacent pairs. It is a straightforward (but somewhat tedious) exercise to check that this, using \eqref{eqn:eta}, implies the desired bounds. For the details, see Lemma~\ref{lemma:eigenvalue computation} in the appendix.
\end{proof}

\subsection{Finding good almost regular induced subgraphs}

In this subsection we establish an important lemma that allows us to reduce many of our statements to the special case where $G$ is almost regular. For example, in the next subsection we already 
use it to study the surplus of graphs with few triangles. We believe that this lemma will be a useful tool for future works on this subject as well.

\begin{lemma} \label{lemma:regularize basic}
    Let $\alpha$ and $\beta$ be real numbers such that $\beta>0$, $\alpha<\beta$ and $\alpha+\beta\leq 2$. Then there exist positive constants $c_1$, $c_2$ and $C$ (depending on $\alpha,\beta$), such that the following holds.
    
    Let $G$ be a graph on $n$ vertices with average degree $d$. Then either $G$ has surplus at least $c_1n^{\alpha}d^{\beta}$ or $G$ has an induced subgraph $\tilde{G}$ with $\tilde{n}$ vertices and average degree $\tilde{d}$ where $\tilde{n}^{\alpha}\tilde{d}^{\beta}\geq c_2n^{\alpha}d^{\beta}$, $\delta(\tilde{G})\geq \tilde{d}/C$ and $\Delta(\tilde{G})\leq C\tilde{d}$.
\end{lemma}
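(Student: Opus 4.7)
The plan is to find the almost-regular induced subgraph via a dyadic decomposition of $V(G)$ by degree followed by a fixed-threshold cleanup, and to fall back on a global surplus bound when no dyadic class cooperates.

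Partition $V(G)$ into dyadic degree classes $V_i=\{v:d_G(v)\in[2^{i-1},2^i)\}$ for $i=1,\ldots,\lceil\log_2 n\rceil$, with $n_i=|V_i|$ and $D_i=2^i$; note $\sum_i n_i D_i=\Theta(nd)$. A Lagrange-style optimisation over distributions $\{(n_i,D_i)\}$ subject to this constraint and the dyadic progression of the $D_i$ yields $\max_i n_i^\alpha D_i^\beta\geq c_{\alpha,\beta}\,n^\alpha d^\beta$ for a constant $c_{\alpha,\beta}>0$: setting $n_i^\alpha D_i^\beta=c$ at a balanced optimum forces $n_i=c^{1/\alpha}D_i^{-\beta/\alpha}$, and the resulting geometric sum $\sum_i 2^{(1-\beta/\alpha)i}$ converges precisely because $\alpha<\beta$. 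Fix $i^\ast$ attaining this maximum; all degrees in $G[V_{i^\ast}]$ are bounded above by $D_{i^\ast}$.

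Pick a large constant $C$ (to become the $C$ in the conclusion, divided by a numerical factor). If $e(G[V_{i^\ast}])\geq n_{i^\ast}D_{i^\ast}/(4C)$, run a cleanup with the fixed threshold $D_{i^\ast}/(8C)$: iteratively delete any vertex of current degree below $D_{i^\ast}/(8C)$. Each deletion removes fewer than $D_{i^\ast}/(8C)$ edges, hence at most $n_{i^\ast}D_{i^\ast}/(8C)$ edges in total, so the resulting graph $\tilde G$ is non-empty with $\tilde m\geq n_{i^\ast}D_{i^\ast}/(8C)$, $\delta(\tilde G)\geq D_{i^\ast}/(8C)$, $\Delta(\tilde G)\leq D_{i^\ast}$, and $\tilde n\geq 2\tilde m/\Delta(\tilde G)\geq n_{i^\ast}/(4C)$. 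Since $\delta(\tilde G)\leq\tilde d\leq\Delta(\tilde G)$, this gives $\delta(\tilde G)\geq\tilde d/(8C)$ and $\Delta(\tilde G)\leq 8C\,\tilde d$; plugging in the bounds $\tilde n\geq n_{i^\ast}/(4C)$ and $\tilde d\geq D_{i^\ast}/(8C)$ yields $\tilde n^\alpha\tilde d^\beta\geq c_2\,n_{i^\ast}^\alpha D_{i^\ast}^\beta\geq c_2 c_{\alpha,\beta}\,n^\alpha d^\beta$ for an appropriate constant $c_2>0$.

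In the complementary (degenerate) case $e(G[V_{i^\ast}])<n_{i^\ast}D_{i^\ast}/(4C)$, at least $n_{i^\ast}D_{i^\ast}/4$ edges cross from $V_{i^\ast}$ to $V(G)\setminus V_{i^\ast}$, so $G$ contains a large bipartite-like induced structure. I would use this together with Edwards' bound $\sp(G)\geq\Omega(\sqrt{e(G')})$ applied to suitably chosen induced subgraphs (and Lemma~\ref{lemma:surplus additive}) to deduce the surplus bound $\sp(G)\geq c_1\,n^\alpha d^\beta$. The exponent conditions enter precisely here: when $\alpha+\beta\leq 1$ Edwards' $\Omega(\sqrt{nd})$ alone suffices, whereas for $\alpha+\beta\in(1,2]$ the near-bipartite structure must be exploited to promote the surplus up to $\Omega(m)=\Omega(nd)$, which then dominates $n^\alpha d^\beta$ since $n^{1-\alpha}d^{1-\beta}\geq 1$ under $d\leq n$ and $\alpha+\beta\leq 2$.

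The main obstacle will be executing the degenerate case cleanly when $\alpha+\beta$ is close to $2$: one must show that the cross-class structure produced by the failure of the density condition forces MaxCut within $o(m)$ of $m$ rather than merely $m/2+o(m)$. I expect this to require combining a random cut across dyadic levels with a dedicated analysis of multipartite MaxCut, and it is the step where the interplay between $\alpha$, $\beta$ and the dyadic bookkeeping is most delicate.
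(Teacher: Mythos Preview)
Your dyadic-by-degree approach is different from the paper's, which instead proves the stronger Lemma~\ref{lemma:regularize tight} by an \emph{iterative} zoom-in: at each stage, split into low-degree vertices $S$ (degree $\le C_0 d$) and high-degree vertices $T$; if $e(T)$ is a constant fraction of $m$ then pass to $G[T]$ (fewer vertices, higher average degree, so $n^\alpha d^\beta$ does not decrease since $\alpha<\beta$); if $e(S,T)$ is a constant fraction of $m$ then a random thinning of $S$ produces a cut with surplus $\Omega(m)\ge\Omega(n^\alpha d^\beta)$; otherwise $G[S]$ already has bounded max degree. The point is that whenever the ``bipartite'' case is triggered, it involves a constant fraction of \emph{all} current edges.

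Your argument has a genuine gap precisely in the degenerate case. Knowing that $\Omega(n_{i^\ast}D_{i^\ast})$ edges leave $V_{i^\ast}$ tells you nothing about the surplus of $G$: the complement $V(G)\setminus V_{i^\ast}$ may contain almost all of $e(G)$, so the cut $(V_{i^\ast},V\setminus V_{i^\ast})$ can have arbitrarily negative surplus, and $n_{i^\ast}D_{i^\ast}$ itself can be an arbitrarily small fraction of $nd$. Your claim that one can ``promote the surplus up to $\Omega(m)=\Omega(nd)$'' from this local bipartite structure is therefore unjustified; nothing in the setup forces the crossing edges to constitute $\Omega(m)$ edges, and Edwards' bound on $G[V_{i^\ast}]$ only gives $O(\sqrt{n_{i^\ast}D_{i^\ast}})$, which is far too weak when $\beta$ is close to $2$. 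The paper's iteration sidesteps this entirely, because each time the bipartite case arises it is between the current $S$ and $T$, which together hold all the current edges.

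A secondary issue: your Lagrange argument for the pigeonhole writes $n_i=c^{1/\alpha}D_i^{-\beta/\alpha}$, which is meaningless for $\alpha\le 0$; the lemma is stated (and used in the paper, e.g.\ with $\alpha=-(r-3)$) for all real $\alpha<\beta$. Even for $\alpha>0$ the optimisation as written does not use the constraint $\sum_i n_i\le n$, and without it the bound fails for large $d$.
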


A typical use of this lemma will be as follows. Suppose that we want to prove that an $n$-vertex $H$-free graph with average degree $d$ has surplus $\Omega(n^{\alpha}d^{\beta})$. Applying Lemma~\ref{lemma:regularize basic}, we can pass to the induced subgraph $\tilde{G}$ if necessary, so it suffices to prove the bound $\Omega(n^{\alpha}d^{\beta})$ under the assumption that $\delta(G)\geq d/C$ and $\Delta(G)\leq Cd$.

For a few of our results, we will need the following somewhat stronger version of Lemma~\ref{lemma:regularize basic}. Similarly to the previous lemma, one could also impose an additional minimum degree condition $\delta(\tilde{G})\geq \tilde{d}/C$, but we will not need that.

\begin{lemma} \label{lemma:regularize tight}
    Let $\eps$, $\alpha$ and $\beta$ be real numbers such that $\eps,\beta>0$, $\alpha<\beta$ and $\alpha+\beta\leq 2$. Then there exist positive constants $c$ and $C$ (depending on $\eps,\alpha,\beta$), such that the following holds.
    
    Let $G$ be a graph on $n$ vertices with average degree $d$. Then either $G$ has surplus at least $cn^{\alpha}d^{\beta}$ or $G$ has an induced subgraph $\tilde{G}$ with $\tilde{n}$ vertices and average degree $\tilde{d}$ where $\tilde{n}^{\alpha}\tilde{d}^{\beta}\geq (1-\eps)n^{\alpha}d^{\beta}$ and $\Delta(\tilde{G})\leq C\tilde{d}$.
\end{lemma}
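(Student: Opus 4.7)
The plan is an iterative vertex-peeling argument tracking $Q(H) := n(H)^\alpha d(H)^\beta$. Starting from $G^{(0)} := G$, at each step either $\Delta(G^{(i)}) \leq C\,d(G^{(i)})$ (in which case halt and output $\tilde G := G^{(i)}$) or let $v^{(i)}$ be a maximum-degree vertex of $G^{(i)}$ and set $G^{(i+1)} := G^{(i)} - v^{(i)}$, where $C = C(\alpha,\beta,\eps)$ is a large constant. By construction $\tilde G$ satisfies the required maximum-degree condition, so we are done if $Q(\tilde G) \geq (1-\eps)\,Q(G)$. Henceforth assume $Q(\tilde G) < (1-\eps)Q(G)$ and aim for $\sp(G) \geq c\,Q(G)$.

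Write $n_i$, $d_i$, $m_i = n_id_i/2$, $\Delta_i = d_{G^{(i)}}(v^{(i)}) > Cd_i$, and $Q_i = Q(G^{(i)})$. Since $\alpha - \beta < 0$, the first factor of
\begin{equation*}
\frac{Q_{i+1}}{Q_i} \;=\; \Bigl(1 - \tfrac{1}{n_i}\Bigr)^{\alpha-\beta}\Bigl(1 - \tfrac{\Delta_i}{m_i}\Bigr)^\beta
\end{equation*}
is at least $1$, and the estimate $-\log(1-x) \le 2x$ on $[0,\tfrac12]$ gives $\log(Q_i/Q_{i+1}) \le 2\beta\,\Delta_i/m_i$ whenever $\Delta_i \le m_i/2$ (the opposite regime exhibits an induced star with $\Omega(m_i)$ edges, producing $\Omega(m_i)$ surplus; a short case-split handles the step at which $m_i$ first drops below $m/2$). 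Telescoping together with $\log(Q_0/Q_k) \geq -\log(1-\eps) \geq \eps$ yields $\sum_i\Delta_i/m_i \geq \eps/(2\beta)$, and since $m_i \leq m = nd/2$, this forces $\sum_i\Delta_i \geq \eps\,nd/(4\beta)$.

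Let $U := \{v^{(0)},\ldots,v^{(k-1)}\}$, so $\sum_{v\in U}d_G(v) \geq \sum_i\Delta_i = \Omega(\eps\,nd)$ and $|U| \le n/C$. Since $\alpha+\beta \leq 2$ and $d\leq n-1$ give $nd \geq Q(G)$, it suffices to show $\sp(G) = \Omega_\eps(nd)$. I would split into two sub-cases according to the distribution of the $d_G(v)$. \emph{Case 1:} if $e_G(U) \geq \tfrac14\sum_{v\in U}d_G(v) = \Omega(\eps\,nd)$, then $|U| \le n/C$ forces $\bar d(G[U]) \ge \Omega(\eps C d)$, so $Q(G[U]) \geq \Omega_\eps(C^{\beta-\alpha})\,Q(G) \gg Q(G)$ for $C$ sufficiently large. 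Applying Lemma~\ref{lemma:regularize tight} recursively to $G[U]$ (strictly fewer vertices, so induction on $n$ terminates) produces either $\sp(G[U]) \geq c\,Q(G[U]) \gg Q(G)$, whence $\sp(G) \ge \sp(G[U])$ by Lemma~\ref{lemma:surplus additive}; or an induced subgraph of $G[U] \subseteq G$ whose $Q$-parameter already exceeds $(1-\eps)Q(G)$ and whose maximum degree is at most $C$ times its average, directly fulfilling the conclusion for~$G$. \emph{Case 2:} if instead $e_G(U) < \tfrac14\sum_{v\in U}d_G(v)$, then the bipartite graph $B$ between $U$ and $V\setminus U$ has $\Omega(\eps\,nd)$ edges. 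I would find $S \subseteq V\setminus U$ via random subsampling with appropriate density such that the induced subgraph $H := G[U\cup S]$ satisfies $\sp(H) = \Omega(\eps\,nd)$, by balancing the cross-edge gain $e_G(U,S)$ against the loss from $e_G(S)$ (exploiting that each vertex of $V\setminus U$ has $G$-degree at most $Cd$). Then Lemma~\ref{lemma:surplus additive} applied with $S_1 = U\cup S$ delivers $\sp(G)\geq\sp(H) = \Omega(\eps\,nd)$, completing the argument with $c = \Omega(\eps)$.

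The main obstacle is the extraction in Case~2: the degree mass $\Omega(\eps nd)$ is spread across many vertices of $U$ whose neighbourhoods in $V\setminus U$ can overlap heavily, so any family of vertex-disjoint induced stars rooted in $U$ gathers at most $|V\setminus U| = O(n)$ edges and thereby misses the target whenever $d \gg 1$. Bundling many cross-edges into one induced subgraph via a random subsample of $V\setminus U$, together with concentration-based control on $e_G(S)$, is how I would circumvent this, but nailing the right quantitative balance between $e_G(U,S)$, $e_G(S)$ and $e_G(U)$ to guarantee $\sp(H) = \Omega(\eps nd)$ rather than a merely non-negative quantity is the chief technical point that needs verification.
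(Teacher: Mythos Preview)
Your overall plan is close in spirit to the paper's: isolate a set of high-degree vertices and run a case analysis on where the edges lie. But the one-vertex-at-a-time peeling creates two genuine gaps that the paper's ``remove all high-degree vertices at once'' formulation avoids.

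\textbf{The bound $|U|\le n/C$ is not justified.} You assert this and use it crucially in Case~1 to get $Q(G[U])\ge \Omega_\eps(C^{\beta-\alpha})Q(G)$. But each $v^{(i)}$ only satisfies $d_{G^{(i)}}(v^{(i)})>C d_i$, and the averages $d_i$ strictly decrease along the process (indeed $d_{i+1}<d_i(1-(2C-1)/(n_i-1))$). So later-removed vertices are compared against ever smaller thresholds, and there is no averaging argument that forces $|U|\le n/C$. (The minor step ``since $m_i\le m$ this forces $\sum_i\Delta_i\ge \eps nd/(4\beta)$'' also has the inequality the wrong way; what you actually want is $\sum_i\Delta_i=m_0-m_k$ together with $(m_k/m_0)^\beta\ge Q_k/Q_0$, which does give $\sum_i\Delta_i=\Omega_\eps(nd)$.)

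\textbf{The Case~2 balance does not close.} With your threshold, Case~2 only guarantees $e_G(U)<\tfrac14\sum_{v\in U}d_G(v)$, and $\sum_{v\in U}d_G(v)$ may be as large as $nd$. Subsampling $S\subseteq V\setminus U$ with probability $p$ gives an expected surplus of order $p\cdot e(U,V\setminus U)-e(U)-p^2 e(V\setminus U)$; optimising $p$ yields at best $\Theta(\eps^2 nd)$ from the first and third terms, which is swamped by $e(U)$ when $e(U)=\Theta(nd)$. You correctly flag this as the ``chief technical point'', but with the present thresholds it is not merely a verification issue: the inequality genuinely fails.

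The paper sidesteps both problems by defining the high-degree set in one shot as $T=\{v:d_G(v)>C_0 d\}$, so that $|T|\le n/C_0$ is immediate from averaging. It then runs a \emph{three}-way split on $e(T)$, $e(S,T)$, $e(S)$ (with $S=V\setminus T$): if $e(T)\ge \theta^2 m/20$ recurse on $G[T]$; if $e(S,T)\ge \theta m/2$ but $e(T)<\theta^2 m/20$, subsample $S$ and use the smallness of $e(T)$ to make the cut balance work; otherwise $e(S)\ge(1-\theta)m$ and $G[S]$ is the desired $\tilde G$. The key difference from your Case~2 is that the ``loss'' term $e(T)$ is controlled by having already excluded the first case, with a threshold quadratic in $\theta$ precisely so that it is beaten by the $\Theta(\theta^2 m)$ gain from subsampling.
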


\begin{proof}
Choose $\theta$ such that $(1-\theta)^{\beta}=1-\eps$. Let $c=\frac{\theta^2}{320}$, let $C_0$ be chosen so that $(\theta^2/20)^{\beta}C_0^{\beta-\alpha}= 1$ and let $C=C_0/(1-\theta)$.

\begin{claim}
    If $G$ is a graph with $n$ vertices, $m$ edges and average degree $d$, then at least one of the following must hold.
    \begin{enumerate}[label=\rm{(\roman*)}]
        \item $G$ has an induced subgraph $G'$ with $n'<n$ vertices and average degree $d'$, where $(n')^{\alpha}(d')^{\beta}\geq n^{\alpha}d^{\beta}$.
        \item $G$ has surplus at least $\frac{\theta^2}{160}m$.
        \item $G$ has an induced subgraph $\tilde{G}$ with $\tilde{n}$ vertices and average degree $\tilde{d}$, where $\tilde{n}^{\alpha}\tilde{d}^{\beta}\geq (1-\eps)n^{\alpha}d^{\beta}$ and $\Delta(\tilde{G})\leq C\tilde{d}$.
    \end{enumerate}
\end{claim}

Before proving the claim, let us see how the lemma follows. Given a graph $G$, apply the claim repeatedly for the obtained induced subgraphs $G'$, $(G')'$, etc. as long as (i) holds. Eventually, (ii) or (iii) must hold, so $G$ has an induced subgraph $F$ with $N$ vertices and average degree $D$ such that $N^{\alpha}D^{\beta}\geq n^{\alpha}d^{\beta}$, and either $F$ has surplus at least $\frac{\theta^2}{320}ND$, or $F$ has an induced subgraph $\tilde{G}$ with $\tilde{n}$ vertices and average degree $\tilde{d}$, where $\tilde{n}^{\alpha}\tilde{d}^{\beta}\geq (1-\eps)N^{\alpha}D^{\beta}$ and $\Delta(\tilde{G})\leq C\tilde{d}$. In the latter case, we are done. In the former case, note that since $\alpha+\beta\leq 2$, $\alpha<\beta$ and $N>D$, we have $ND\geq (ND)^{\frac{1}{2}(\alpha+\beta)}\geq N^{\alpha}D^{\beta}$, so $F$ (and, consequently, $G$) has surplus at least $\frac{\theta^2}{320}N^{\alpha}D^{\beta}\geq \frac{\theta^2}{320}n^{\alpha}d^{\beta}=cn^{\alpha}d^{\beta}$. Thus, it suffices to prove the claim.

\claimproof
Let $S$ be the set of vertices in $G$ with degree at most $C_0d$ and let $T=V(G)\setminus S$. We consider the following three cases (observe that at least one of these must occur).

\medskip

\textbf{Case 1.} $e_G(T)\geq \frac{\theta^2}{20}m$.

Let $G'=G[T]$ and let $n'$ and $d'$ be the number of vertices and average degree of $G'$, respectively. Then $n'd'=2e(G')\geq \theta^2 m/10=\theta^2 nd/20$, so $\frac{d'}{d}\geq \frac{\theta^2}{20}\frac{n}{n'}$ and hence
$$\frac{(n')^\alpha (d')^{\beta}}{n^{\alpha} d^{\beta}}\geq \left(\frac{n'}{n}\right)^{\alpha} \left(\frac{\theta^2}{20}\frac{n}{n'}\right)^{\beta}=(\theta^2/20)^{\beta}\left(\frac{n}{n'}\right)^{\beta-\alpha}.$$
Note that $G$ has average degree $d$ but every vertex in $T$ has degree at least $C_0d$ in $G$, so $n'=|T|\leq n/C_0$. Hence,
$$(\theta^2/20)^{\beta}\left(\frac{n}{n'}\right)^{\beta-\alpha}\geq (\theta^2/20)^{\beta}C_0^{\beta-\alpha}=1.$$
Thus, $G'$ satisfies (i).

\medskip

\textbf{Case 2.} $e_G(T)< \frac{\theta^2}{20}m$, but $e_G(S,T)\geq \frac{\theta}{2}m$.

Let $S'$ be a random subset of $S$ obtained by keeping each vertex of $S$ with probability $\theta/4$. Then $\expn{e_G(S')}\leq \frac{\theta^2}{16}m$ and $\expn{e_G(S',T)}\geq \frac{\theta^2}{8}m$. Using $e_G(T)\leq \frac{\theta^2}{20}m$, it follows that there exists a cut in $G$ with surplus at least $\frac{1}{2}(\frac{\theta^2}{8}m-\frac{\theta^2}{16}m-\frac{\theta^2}{20}m)=\frac{\theta^2}{160}m$, so (ii) holds.

\medskip

\textbf{Case 3.} $e_G(S)\geq (1-\theta)m$.

Let $\tilde{G}=G[S]$ and let $\tilde{n}$ and $\tilde{d}$ be the number of vertices and average degree of $\tilde{G}$, respectively. Clearly, $\tilde{d}\geq (1-\theta)d$, so $\Delta(\tilde{G})\leq C_0d=(1-\theta)Cd\leq C\tilde{d}$. Also, $\tilde{n}\tilde{d}\geq (1-\theta)nd$, so
$$\frac{\tilde{n}^\alpha \tilde{d}^{\beta}}{n^{\alpha} d^{\beta}}\geq \left(\frac{\tilde{n}}{n}\right)^{\alpha} \left((1-\theta)\frac{n}{\tilde{n}}\right)^{\beta}=(1-\eps)\left(\frac{n}{\tilde{n}}\right)^{\beta-\alpha}\geq 1-\eps.$$
Hence, $\tilde{G}$ satisfies (iii).
\end{proof}

It is not hard to deduce Lemma~\ref{lemma:regularize basic}.

\lateproof{Lemma~\ref{lemma:regularize basic}}
By Lemma~\ref{lemma:regularize tight}, applied with $\eps=1/2$, there exist positive constants $c_1$ and $C'$ such that either $G$ has surplus at least $c_1n^{\alpha}d^{\beta}$ or $G$ has an induced subgraph $G'$ with $n'$ vertices and average degree $d'$ where $(n')^{\alpha}(d')^{\beta}\geq \frac{1}{2}n^{\alpha}d^{\beta}$ and $\Delta(G')\leq C'd'$. In the former case we are done. In the latter case, note that $G'$ has an induced subgraph $\tilde{G}$ with at least $e(G')/2$ edges and minimum degree at least $d'/4$. Then, writing $\tilde{n}$ for the number of vertices and $\tilde{d}$ for the average degree of $\tilde{G}$, we have that $\tilde{d} \geq d'/2$ and $\tilde{n}\tilde{d} \geq n'd'/2$. Therefore
$\tilde{n}^{\alpha}\tilde{d}^{\beta}\geq 2^{-\beta}(n')^{\alpha}(d')^{\beta}$, so we can choose $c_2=2^{-\beta-1}$ and $C=4C'$.
\endproof

\subsection{Fewer triangles than in the random graph}

In this subsection, we prove Theorem~\ref{thm:triangle simple}. Using Lemma~\ref{lemma:regularize tight} (as we show in the end of the subsection), it suffices to consider graphs whose maximum degree is not much larger than the average degree. Under this assumption, we prove the following slightly stronger result, which we will use to prove Theorem~\ref{thm:clique optimal}.

\begin{theorem} \label{thm:triangle square threshold}
	Let $G$ be a graph with $n$ vertices, average degree $d$, maximum degree at most $Cd$ and less than $(1-\eps)\frac{d}{6n}\sum_{u\in V(G)} d(u)^2$ triangles, for some $\eps>0$. Then $G$ has surplus $\Omega(\frac{\eps^2}{C}d^2)$.
\end{theorem}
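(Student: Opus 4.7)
The plan is to apply the semidefinite programming framework from Corollary~\ref{cor:general SDP} using the same vector construction as in the proof of Theorem~\ref{thm:strongly regular surplus}. Specifically, for each $v\in V(G)$ define
\[
\vx^v_u \;=\; \begin{cases} 1+\gamma a, & u=v,\\ -\gamma/\sqrt{d}, & u\sim v,\\ \gamma a, & \text{otherwise,}\end{cases}
\]
with $a=\sqrt{d}/(n-d)$ and $\gamma>0$ a parameter to be tuned. We may assume $d\le n/2$, else a small perturbation reduces to that case. As remarked right after \eqref{eqn:innerproduct}, that derivation of the inner product formula did not rely on regularity, so for every edge $uv$,
\[
\langle \vx^u,\vx^v\rangle \;=\; -\frac{2\gamma}{\sqrt{d}} + \gamma^2 M_{uv},
\qquad M_{uv} := A\,\delta(u,v) - B\bigl(\Delta_u+\Delta_v\bigr),
\]
where $A=n^2/[d(n-d)^2]$, $B=n/(n-d)^2$ (note the identity $Ad/(2n)=B/2$), $\delta(u,v)=d(u,v)-d^2/n$ and $\Delta_v=d(v)-d$. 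The assumption $\Delta(G)\le Cd$ ensures $\|\vx^v\|=O(1)$.

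My first step is to compute $\sum_{uv\in E} M_{uv}$. Using $\sum_{uv\in E}d(u,v)=3t(G)$, $\sum_{uv\in E}(d(u)+d(v))=\sum_v d(v)^2=:S$, together with the identity $Ad/(2n)=B/2$, the hypothesis $3t(G)<(1-\eps)\,dS/(2n)$, and the Cauchy--Schwarz bound $S\ge nd^2$, a short algebraic simplification yields
\[
\sum_{uv\in E(G)} M_{uv} \;\le\; -\tfrac{B}{2}\bigl[(S-nd^2)+\eps S\bigr] \;\le\; -\Omega(\eps\,d^2).
\]
My second step is to bound $\sum_{uv} M_{uv}^+$ using $\Delta(G)\le Cd$: the $A\,\delta^+$ contributions sum to at most $3A\,t(G)=O(Cd^2)$ via the triangle hypothesis, and the $B(\Delta_u+\Delta_v)^-$ contributions sum to at most $B\sum_v d(v)\,|d(v)-d|=O(Cd^2)$. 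Hence $\sum_{uv} M_{uv}^+ = O(Cd^2)$.

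My third step is to apply Corollary~\ref{cor:general SDP} with the splitting $a_{uv}:=2\gamma/\sqrt{d}+\gamma^2 M_{uv}^-$ and $b_{uv}:=\gamma^2 M_{uv}^+$, so that $-a_{uv}+b_{uv}=\langle \vx^u,\vx^v\rangle$ exactly. Then $\sum a_{uv}-\sum b_{uv} = -\sum\langle \vx^u,\vx^v\rangle \ge 2\gamma m/\sqrt{d} + \Omega(\gamma^2 \eps\,d^2)$ and $\sum b_{uv}\le O(\gamma^2 Cd^2)$, giving
\[
\sp(G) \;\ge\; \Omega\!\bigl(\gamma\, n\sqrt{d}\;+\;\gamma^2 \eps\,d^2\bigr) \;-\; O(\gamma^2 Cd^2).
\]
Finally I would tune $\gamma$ in terms of $\eps$ and $C$ so that the right-hand side is at least $\Omega(\eps^2 d^2/C)$; the natural candidate is $\gamma$ of order $\sqrt{\eps/C}$, which makes the quadratic gain scale like $\eps^2 d^2/C$ and the penalty like $\eps d^2$.

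The hard part is this last optimisation. The constants in Corollary~\ref{cor:general SDP} are not equal ($1/\pi$ on the gain side versus $1/2$ on the penalty side), so the quadratic gain must beat the quadratic penalty by a definite constant factor; when $\eps/C$ is small, neither the quadratic gain nor the linear term $\Omega(\gamma n\sqrt{d})$ is individually sufficient in every regime of $d$. A clean way to handle all ranges of parameters is to invoke Lemma~\ref{lemma:regularize basic} at the outset to effectively replace $C$ by a constant depending only on~$\eps$, after which a single choice $\gamma=\Theta(\sqrt{\eps/C})$ makes all three terms cooperate to yield the claimed $\Omega(\eps^2 d^2/C)$ lower bound.
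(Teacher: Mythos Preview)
Your vector construction and the computation of $\langle \vx^u,\vx^v\rangle$ are fine (a minor misreading: the remark after~\eqref{eqn:innerproduct} says the derivation needs only \emph{regularity}, not strong regularity---you have correctly reinserted the extra $\Delta_u,\Delta_v$ terms that appear in the non-regular case). Steps~1 and~2 are also essentially right. The gap is in the final optimisation, and your proposed fix does not close it.

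Concretely, Corollary~\ref{cor:general SDP} gives $\sp(G)\ge c_1\sum a_{uv}-c_2\sum b_{uv}$ with absolute constants $c_2>c_1$, so your splitting yields
\[
\sp(G)\;\ge\; c_1\gamma n\sqrt{d}\;+\;\gamma^2\bigl(c_1\cdot\Omega(\eps d^2)\;-\;(c_2-c_1)\cdot O(Cd^2)\bigr).
\]
When $\eps/C$ is small the quadratic bracket is of order $-Cd^2$; optimising over $\gamma$ then only delivers $\sp(G)=\Omega(n^2/(Cd))$, which matches $\eps^2 d^2/C$ only in the sparse regime $d\lesssim (n/\eps)^{2/3}$ and fails for denser graphs. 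The suggested rescue via Lemma~\ref{lemma:regularize basic} (or~\ref{lemma:regularize tight}) does not work either: those lemmas control $\tilde n,\tilde d,\Delta(\tilde G)$ but not $\sum_u d_{\tilde G}(u)^2$, so the triangle hypothesis of Theorem~\ref{thm:triangle square threshold} need not persist in $\tilde G$. Indeed, all you know is $t(\tilde G)\le t(G)<(1-\eps)\frac{d}{6n}S\le (1-\eps)Cd^3/6$, whereas the new threshold $\frac{\tilde d}{6\tilde n}\tilde S$ may be as small as $\Theta(\tilde d^3)$; for large $C$ there is no implication. Moreover, even granting the hypothesis for $\tilde G$, the regularisation constant $C_0=C_0(\eps)$ from Lemma~\ref{lemma:regularize tight} blows up as $\eps\to 0$, so the same constant-mismatch recurs.

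The paper avoids SDP here entirely and uses random neighbourhood sampling. One picks $k=\lceil \eps n/(4Cd)\rceil$ random centres $v_1,\dots,v_k$, sets $A_j$ to be the label-$0$ part of $N(v_j)$ disjoint from the other neighbourhoods, and compares with independent random sets $B_j$ of density $\Theta(d/n)$. The triangle hypothesis upper-bounds $\expn{\sum_j e(A_j)}$ by a $(1-\eps)$ fraction of the benchmark, the bound $\Delta\le Cd$ keeps the overlap loss in $\expn{\sum_j e(A_j,B_j)}$ to a factor $(1-\eps/4)$, and the three expectations combine directly to $\expn{X-Y-Z}=\Omega(\eps^2 d^2/C)$, with no free parameter to tune and no constant-mismatch to fight.
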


\begin{proof}
Let $\mu=\frac{\eps}{4C}$ and $k=\lceil\mu n/d\rceil$.

First, we give every vertex a random label from $\Set{0,1,\dots,k+1}$, where $0$ is chosen with probability $1/3$, each of $1,\dots,k$ is chosen with probability $d/3n$, and the remaining probability falls on $k+1$. This is feasible since $k\cdot \frac{d}{3n}\leq (\mu n/d+1)\frac{d}{3n}\leq \frac{\mu}{3}+\frac{d}{3n}\leq 2/3$. For every $j\in [k]$, let $B_j$ be the set of vertices with label~$j$.

Now, pick uniformly at random with repetition vertices $v_1,\dots,v_k$.
For $j\in[k]$, let $A_j$ be the set of neighbours of $v_j$ with label $0$ which are not neighbours of any other $v_i$.

Let $X$ be the number of edges which go between $A_j$ and $B_j$ for some $j\in [k]$, let $Y$ be the number of edges inside some $A_j$ and let $Z$ be the number of edges inside some $B_j$.
Our goal will be to show that $\expn{X-Y-Z}= \Omega(\frac{\eps^2}{C}d^2)$.
This is achieved in three separate claims.
\begin{claim}
	$\expn{Y} \le (1-\eps) \frac{kd}{18n^2}\sum_{u\in V(G)} d(u)^2 $.
\end{claim}

\claimproof
	Consider an edge $uv$. Note that $uv$ is only going to be an internal edge of some $A_j$ if some common neighbour is among the $v_j$'s. The probability of this is at most $k\frac{d(u,v)}{n}$. Moreover, both $u$ and $v$ need to be labelled $0$. Now, summing over all edges, we see that $\expn{Y}\le \frac{k}{9n} \sum_{uv\in E(G)}d(u,v)$. The sum counts every triangle exactly three times. Using the assumption, we get the claim.
\endclaimproof

\begin{claim} \label{claim2}
	$\expn{Z} \le \frac{kd^3}{18n}$.
\end{claim}

\claimproof
Consider an edge $uv$. The probability that it is contained in $B_j$ for some fixed $j$ is $(d/3n)^2$. Multiplying this with $k$ for the number of possibilities for $j$ and $nd/2$ for the total number of edges yields the claim.
\endclaimproof

\begin{claim}
	$\expn{X} \ge (1-\eps/4)\frac{kd}{9n^2}\sum_{u\in V(G)} d(u)^2$.
\end{claim}

\claimproof
Consider an edge $uv$. With probability $kd/9n$, $u$ is labelled $0$ and $v$ gets a label from $[k]$. Condition on such a label $j\in [k]$. 
Now, the probability that $v_j$ is a neighbour of $u$, but no other $v_i$ is a neighbour of $u$, is $\frac{d(u)}{n}(1-d(u)/n)^{k-1}\ge \frac{d(u)}{n}(1-(k-1)d(u)/n)\geq \frac{d(u)}{n}(1-(k-1)Cd/n)\geq \frac{d(u)}{n}(1-C\mu)=\frac{d(u)}{n}(1-\eps/4)$.
The same applies with the roles of $u$ and $v$ swapped. Hence, the probability of $uv$ contributing to $X$ is at least $\frac{kd}{9n^2}(d(u)+d(v))(1-\eps/4)$.
Summing over all edges, we obtain the desired inequality.
\endclaimproof

By convexity, $d^2\leq \frac{1}{n}\sum_{u\in V(G)} d(u)^2$, so combining the three claims, we get
$$\expn{X-Y-Z}\geq \frac{kd}{9n^2} \left(\sum_{u\in V(G)} d(u)^2\right)\left((1-\eps/4)-(1-\eps)/2-1/2\right)=\frac{\eps kd}{36n^2}\sum_{u\in V(G)} d(u)^2\geq \frac{\eps \mu}{36}d^2.$$
We infer that there exists an outcome for which $X-Y-Z \ge \Omega(\eps \mu d^2)$.
By Lemma~\ref{lemma:surplus additive}, we conclude that
$$\sp(G)\geq \sum_{j=1}^k \sp(G[A_j\cup B_j])\geq \frac{1}{2}\sum_{j=1}^k (e(A_j,B_j)-e(A_j)-e(B_j))=\frac{1}{2}(X-Y-Z)\geq \Omega(\eps \mu d^2),$$
which completes the proof by using $\mu=\frac{\eps}{4C}$.
\end{proof}

It is easy to deduce Theorem~\ref{thm:triangle simple}.

\lateproof{Theorem~\ref{thm:triangle simple}}
By Lemma~\ref{lemma:regularize tight} (with $\eps/3$ in place of $\eps$, $\alpha=0$ and $\beta=2$), either $G$ has surplus $\Omega_{\eps}(d^2)$, or it has an induced subgraph $\tilde{G}$ with $\tilde{n}$ vertices and average degree $\tilde{d}$, where $\tilde{d}^2\geq (1-\eps/3)d^2$ and $\Delta(\tilde{G})\leq C\tilde{d}$ for some $C=C(\eps)$. In the latter case, we have $\tilde{d}^3\geq (1-\eps/3)^{3/2}d^3\geq (1-\eps/2)d^3$, so the number of triangles in $\tilde{G}$ is less than $(1-\eps)d^3/6\leq \frac{1-\eps}{1-\eps/2}\tilde{d}^3/6\leq (1-\eps/2)\tilde{d}^3/6$. Clearly $\sum_{u\in V(\tilde{G})} d_{\tilde{G}}(u)^2\geq \tilde{n}\tilde{d}^2$, so we can apply Theorem~\ref{thm:triangle square threshold} with $\eps/2$ in place of $\eps$ and deduce that $\tilde{G}$ has surplus $\Omega_{\eps}(\tilde{d}^2)\geq \Omega_{\eps}(d^2)$. It follows that $G$ has the desired surplus.
\endproof

\subsection{\texorpdfstring{$C_5$}{C5}-free graphs} \label{subsec:five cycle}

In this subsection, we prove the following result, which is a special case of Theorem~\ref{thm:odd cycles}. The proof already contains many of the ideas needed to prove the more general result, but is less technical.

\begin{theorem} \label{thm:five cycle}
    Let $G$ be a regular $C_5$-free graph with $m$ edges. Then $G$ has surplus $\Omega(m^{6/7})$.
\end{theorem}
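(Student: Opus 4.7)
My plan is to apply the SDP framework (Corollary~\ref{cor:general SDP}) with the same family of vectors used in the proof of Theorem~\ref{thm:strongly regular surplus}: for each $v\in V(G)$, set $\vx^v_u = 1+\gamma a$ if $u=v$, $\vx^v_u=-\gamma/\sqrt{d}$ if $u\in N(v)$, and $\vx^v_u=\gamma a$ otherwise, with $a=\sqrt{d}/(n-d)$ and $\gamma\in(0,1]$ a free parameter. Identity~\eqref{eqn:innerproduct}, which was derived using only the $d$-regularity of $G$, gives for every edge $uv$
$$\langle \vx^u,\vx^v\rangle = -\frac{2\gamma}{\sqrt{d}}+\gamma^2\left(\frac{1}{d}+\frac{2a}{\sqrt{d}}+a^2\right)\delta(u,v),$$
where $\delta(u,v)=d(u,v)-d^2/n$ is the codegree deviation. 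Summing over $E(G)$ and invoking Corollary~\ref{cor:general SDP} with $a_{uv}=2\gamma/\sqrt d$ and $b_{uv}=\gamma^2\Theta(1/d)\cdot\delta(u,v)^+$ yields
$$\sp(G) \ge \Omega\!\left(\frac{\gamma\,m}{\sqrt d}\right) \,-\, O\!\left(\frac{\gamma^2}{d}\sum_{uv\in E}\delta(u,v)^+\right).$$

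The decisive new input from $C_5$-freeness is the following local disjointness, which I would establish first: for any edge $uv$, writing $A=N(v)\setminus N[u]$ and $B=N(u)\setminus N[v]$, every pair $a\in A$, $b\in B$ satisfies $N(a)\cap N(b)=\emptyset$. Indeed, a common neighbour $c$ would produce the $5$-cycle $u{-}v{-}a{-}c{-}b{-}u$, whose vertices are forced to be distinct by the membership conditions. Equivalently, for every $c\notin\{u,v\}$ one has $|N(c)\cap A|\cdot|N(c)\cap B|=0$. I would then use this pointwise identity to obtain, by swapping the order of summation, a global bound of the form $\sum_{uv\in E}\delta(u,v)^+ = O(m\sqrt{d})$; the underlying idea is that large positive $\delta(u,v)$ forces $A,B$ to be small, and the disjointness condition summed globally controls precisely this imbalance. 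With that bound in hand, a constant choice of $\gamma$ gives $\sp(G)=\Omega(m/\sqrt{d})=\Omega(n\sqrt d)$.

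Since $n\sqrt d \ge (nd)^{6/7}$ iff $d\le n^{2/5}$, this already handles the sparse regime. In the dense regime $d>n^{2/5}$, the Shearer-type contribution is too weak and one must extract an extra $\Omega(d^3)$ of surplus (noting $d^3\ge (nd)^{6/7}$ iff $d\ge n^{2/5}$, so the two regimes match exactly at the critical density $d=n^{2/5}$ of the extremal Alon--Kahale construction). For this, I would either augment the SDP vectors with additional coordinates encoding the distance-$2$ structure of $G$ (equivalently, replacing $Ae_v$ by a small polynomial in $A$ applied to $e_v$), tuned so that the $C_5$-free disjointness cancels the error from high-codegree edges; or pass to an almost-regular dense induced subgraph inside $N(v)$ for a vertex $v$ supporting many triangles (which is itself $C_5$-free and inherits even stronger structural constraints from the parent graph), combining the contributions via Lemma~\ref{lemma:surplus additive}.

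The principal obstacle is to carry out the dense-regime step sharply enough to recover the exponent $6/7$ rather than a weaker value. The Alon--Kahale pseudorandom graphs force both regimes to match precisely at $d\asymp n^{2/5}$ with $\sp=\Theta(n^{6/5})$, leaving no room for slack; the prior exponents of Zeng--Hou and Fox--Himwich--Mani fall short precisely because their analyses introduce lossy inequalities at this point. Finding the correct algebraic choice so that the $C_5$-free disjointness is used tightly in the codegree-counting step is, I expect, the main technical content of the proof.
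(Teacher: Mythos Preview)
Your proposal has a genuine gap: the dense-regime step, which you yourself flag as the ``principal obstacle'' and ``main technical content,'' is not actually carried out. The sparse bound $\sp(G)=\Omega(n\sqrt d)$ that you derive is simply Lemma~\ref{lemma:five cycle sparse} (any $d$-degenerate $C_5$-free graph has surplus $\Omega(m/\sqrt d)$), and the paper just quotes it. Your disjointness observation $N(a)\cap N(b)=\emptyset$ for $a\in N(v)\setminus N[u]$, $b\in N(u)\setminus N[v]$ is correct, but it is not what drives the argument; in fact your claimed consequence $\sum_{uv\in E}\delta(u,v)^+=O(m\sqrt d)$ is weaker than the trivial $\sum_{uv}\delta(u,v)^+\le 3t(G)=O(m)$, which already follows from $P_3$-freeness of neighbourhoods. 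So the SDP computation with the strongly-regular vectors buys nothing beyond Lemma~\ref{lemma:five cycle sparse}.

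For the dense regime you suggest either ``augmenting the SDP vectors with distance-$2$ coordinates'' or ``passing to an almost-regular subgraph inside $N(v)$,'' but neither is fleshed out, and the second is unlikely to work: neighbourhoods in a $C_5$-free graph are $P_3$-free, hence have only $O(d)$ edges, far too few to extract surplus $\Omega(d^3)$. The paper's proof is structurally different from your outline. It does not split into a sparse/dense dichotomy on $d$ versus $n^{2/5}$. Instead it performs dyadic pigeonholing on the codegree $d(u,w)$ over $2$-paths $uvw$ to isolate a scale $s$ with $\Omega(nd^2/s^{1/10})$ paths having $d(u,w)\in[s,2s)$. It then proves two complementary bounds in terms of $s$: an SDP bound $\sp(G)=\Omega(nd^{1/2}s^{2/5})$ using vectors whose coordinates encode both $N(v)$ and the set $S(v)=\{u: d(u,v)\in[s,2s)\}$, and a random-sampling bound $\sp(G)=\Omega(d^3/s^{6/5})$ obtained by covering $G$ with $\approx ns/d^2$ sets of the form $S(v_j)$ and exploiting that $\sum_{uv\in E}|S(u)\cap S(v)|$ is small because $G$ has few homomorphic $C_5$'s. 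Taking the geometric combination $(\text{SDP})^{6/7}(\text{sampling})^{1/7}$ eliminates $s$ and yields $\Omega(m^{6/7})$. The introduction of the auxiliary scale $s$ and the second-neighbourhood sampling are the ideas you are missing.
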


The following lemma will be convenient to use. (Here we only need it for $r=5$, but we will use the general version later.)

\begin{lemma} \label{lemma:five cycle sparse}
    Let $r\geq 3$ and let $G$ be a $d$-degenerate $C_r$-free graph with $m$ edges. Then $\sp(G)=\Omega_r(\frac{m}{\sqrt{d}})$.
\end{lemma}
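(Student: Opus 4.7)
The plan is to apply the SDP relaxation (Corollary~\ref{cor:general SDP}) with vectors built from the degeneracy ordering, generalizing the construction used by Carlson et al.~\cite{CKLMST:18} for the triangle-free case ($r=3$). The new ingredient is to use the Erd\H{o}s--Gallai theorem on $P_{r-2}$-free graphs to handle the cross-terms that arise when $r \ge 4$.

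First I would fix a degeneracy ordering $v_1 < \dots < v_n$ of $V(G)$, so that each vertex $v$ has back-neighbourhood $N^-(v) := \{u \in N(v) : u<v\}$ of size at most $d$, with $\sum_v|N^-(v)|=m$. For a parameter $\lambda \in (0,1/\sqrt{d}]$ to be chosen at the end, define
$$\vx^v = -e_v + \lambda \sum_{u \in N^-(v)} e_u \in \bR^{V(G)},$$
where $e_u$ is the standard basis vector indexed by $u$. Then $\|\vx^v\|^2 = 1 + \lambda^2|N^-(v)| \le 2$, and for every edge $uv$ with $u<v$ (so $u \in N^-(v)$),
$$\langle \vx^u, \vx^v\rangle = -\lambda + \lambda^2 |N^-(u) \cap N^-(v)|.$$

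The key combinatorial step bounds the cross-term via the $C_r$-free hypothesis: if $G[N(v)]$ contained a path with $r-2$ edges, then closing this $P_{r-2}$ through $v$ would yield a copy of $C_r$, a contradiction. Hence $G[N^-(v)] \subseteq G[N(v)]$ is $P_{r-2}$-free, and the Erd\H{o}s--Gallai theorem gives $e(G[N^-(v)]) \le \tfrac{r-3}{2}|N^-(v)|$. Summing over $v$ (noting each triangle of $G$ is counted once, namely at its latest vertex) yields
$$\sum_{uv\in E,\,u<v}|N^-(u)\cap N^-(v)| \,=\, \sum_v e(G[N^-(v)]) \,\le\, \tfrac{r-3}{2}\,m.$$

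Applying Corollary~\ref{cor:general SDP} with $a_{uv} = \lambda$ and $b_{uv} = \lambda^2|N^-(u)\cap N^-(v)|$ then gives
$$\sp(G) \,\ge\, \lambda m\bigl(c_1 - c_2(r-3)\lambda/2\bigr)$$
for absolute constants $c_1, c_2>0$. Choosing $\lambda := \min\{1/\sqrt{d},\,c_1/(c_2(r-3))\}$ keeps the bracketed factor at least $c_1/2$ while respecting $\lambda\leq 1/\sqrt{d}$ (and hence the norm bound). For $d \ge (c_2(r-3)/c_1)^2$, $\lambda = 1/\sqrt{d}$ and we obtain $\sp(G) = \Omega(m/\sqrt{d})$; otherwise $d$ is bounded in terms of $r$, $\lambda$ becomes an $r$-dependent constant, and $\sp(G) = \Omega_r(m) \ge \Omega_r(m/\sqrt{d})$ automatically. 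The main technical point of the proof is thus the Erd\H{o}s--Gallai bound on the triangle cross-term; once this is in hand, the rest is a routine SDP calculation with a suitable capped choice of $\lambda$.
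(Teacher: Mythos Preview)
Your proof is correct and follows essentially the same route as the paper's. Both arguments hinge on the observation that $G[N(v)]$ is $P_{r-2}$-free (so $t(G)=O_r(m)$ via Erd\H{o}s--Gallai), and then feed this triangle bound into an SDP-based surplus inequality; the paper simply cites Corollary~1.2 of~\cite{CKLMST:18} (or Lemma~3.3 of~\cite{AKS:05}) as a black box, whereas you unroll that citation by constructing the degeneracy-ordering vectors and applying Corollary~\ref{cor:general SDP} directly, making your write-up self-contained. One cosmetic point: your formula $\lambda=\min\{1/\sqrt{d},\,c_1/(c_2(r-3))\}$ is undefined at $r=3$, but there the cross-term vanishes and $\lambda=1/\sqrt{d}$ works immediately, so just treat that case separately.
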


Since the proof is very standard, we just briefly sketch it. As $G$ is $C_r$-free, the neighbourhood of any vertex induces a $P_{r-2}$-free graph. In particular, the number of edges in $G[N(v)]$ is $O_r(d(v))$, so the number of triangles in $G$ is at most $O_r(m)$. Using Corollary~1.2 from \cite{CKLMST:18} or Lemma~3.3 from \cite{AKS:05} (mentioned in Section~\ref{subsec:intro triangles}), we conclude that $\sp(G)=\Omega_r(\frac{m}{\sqrt{d}})$.

\lateproof{Theorem~\ref{thm:five cycle}}
Let $d$ be the degree of $G$ and let $n$ be the number of vertices in $G$. First note that we may assume that $d$ is arbitrarily large (when $d$ is constant, the theorem follows from Lemma~\ref{lemma:five cycle sparse}). Then clearly, the number of paths of length $2$ in $G$ is $\Omega(nd^2)$.

Here comes one of the key ideas of the proof. By dyadic pigeonholing, there exists a positive integer $s$ such that for at least $\Omega(\frac{nd^2}{s^{1/10}})$ paths $uvw$ in $G$, we have $s\leq d(u,w)<2s$. (The choice of $1/10$ for the exponent is unimportant; it could be replaced by something smaller.) We now prove the following two lower bounds:
\begin{equation}
    \sp(G)=\Omega(nd^{1/2}s^{2/5}) \label{eqn:five cycle SDP}
\end{equation}
and
\begin{equation}
    \sp(G)=\Omega\left(\frac{d^3}{s^{6/5}}\right). \label{eqn:five cycle nhoods}
\end{equation}
Combining these bounds, we can conclude that
$$\sp(G)\geq \Omega\left((nd^{1/2}s^{2/5})^{6/7}\left(\frac{d^3}{s^{6/5}}\right)^{1/7}\right)=\Omega(n^{6/7}d^{6/7}s^{6/35})\geq \Omega(m^{6/7}).$$
Before proving (\ref{eqn:five cycle SDP}) and (\ref{eqn:five cycle nhoods}), let us briefly discuss the intuition behind them. We know that there are many paths $uvw$ with $d(u,w)\approx s$. Assume for the moment that most paths of length~2 are like that. Then the second neighbourhood of a typical vertex has size roughly $d^2/s$. Since $G$ is $C_5$-free, these neighbourhoods induce very sparse subgraphs. We can use these very sparse subgraphs to construct a large cut, just like in Theorem~\ref{thm:triangle square threshold}. Note that we can take roughly $k=\frac{ns}{d^2}$ more-or-less disjoint second neighbourhoods of size roughly $d^2/s$. Each of them is very sparse whereas in a random graph with average degree $d$ these sets would induce roughly $(d^2/s)^2 (d/n)=d^5/(s^2n)$ edges. This results in a surplus roughly $k\cdot d^5/(s^2n)=d^3/s$, which is what (\ref{eqn:five cycle nhoods}) gives, up to a small error term.

The inequality (\ref{eqn:five cycle SDP}) is harder to explain, but observe that a natural type of graph with the property that paths $uvw$ have $d(u,w)\geq s$ is an $s$-blowup of another graph. However, when $G$ is the $s$-blowup of $H$, then $G$ has larger surplus than what is directly provided by Lemma~\ref{lemma:five cycle sparse}. Indeed, $H$ has $m/s^2$ edges and degeneracy at most $d/s$, so by Lemma~\ref{lemma:five cycle sparse}, $\sp(H)=\Omega(\frac{m}{d^{1/2}s^{3/2}})$, and hence $\sp(G)=s^2\sp(H)=\Omega(\frac{m s^{1/2}}{d^{1/2}})=\Omega(nd^{1/2}s^{1/2})$, coinciding with (\ref{eqn:five cycle SDP}), up to a small error term. We now proceed with the proofs of (\ref{eqn:five cycle SDP}) and (\ref{eqn:five cycle nhoods}).

\paragraph{The lower bound $\Omega(nd^{1/2}s^{2/5})$.}

We use the method inspired by the semidefinite relaxation of the MaxCut problem which we presented in Section~\ref{subsec:SDP}. For a vertex $v\in V(G)$, let $S(v)$ be the set of vertices $u\neq v$ with $s\leq d(u,v)<2s$. Now for every $v\in V(G)$, define $\vx^v\in \bR^{V(G)}$ by
$$\vx^v_u=
\begin{cases}
-\frac{1}{\sqrt{d}} \text{ if } u\in N(v)\setminus S(v), \\
\frac{\sqrt{s}}{d} \text{ if } u\in S(v)\setminus N(v), \\
-\frac{1}{\sqrt{d}}+\frac{\sqrt{s}}{d} \text{ if } u\in N(v)\cap S(v), \\
0 \text{ otherwise.}
\end{cases}
$$
Then for any $u,v\in V(G)$,
\begin{align}
    \langle \vx^u,\vx^v\rangle
    &=|N(u)\cap N(v)|\frac{1}{d}+|S(u)\cap S(v)|\frac{s}{d^2} -(|N(u)\cap S(v)|+|S(u)\cap N(v)|)\frac{s^{1/2}}{d^{3/2}}. \label{eqn:five cycle inner product}
\end{align}
In particular, for any $u\in V(G)$, $\|\vx^u\|^2\leq |N(u)|/d+|S(u)|s/d^2$. Clearly $|N(u)|\leq d$. Moreover, $|S(u)|s$ is a lower bound for the number of paths of the form $uvw$ since for any $w\in S(u)$ there are at least $s$ such paths. Therefore,
\begin{equation}
    |S(u)|s\leq d^2, \label{eqn:upper bound on S(u)}
\end{equation}
so we get $\|\vx^u\|^2\leq 2$.
Hence, using (\ref{eqn:five cycle inner product}), and Corollary~\ref{cor:general SDP},
\begin{align}
    \sp(G)
    &\geq \Omega\left(\sum_{uv\in E(G)} (|N(u)\cap S(v)|+|S(u)\cap N(v)|)\frac{s^{1/2}}{d^{3/2}}\right) \nonumber \\
    &-O\left(\sum_{uv\in E(G)} \left(|N(u)\cap N(v)|\frac{1}{d}+|S(u)\cap S(v)|\frac{s}{d^2}\right)\right). \label{eqn:five cycle arcsin}
\end{align}
Observe that
\begin{equation}
    \sum_{uv\in E(G)} |N(u)\cap N(v)|=3t(G)\leq O(nd), \label{eqn:five cycle nhood intersections}
\end{equation}
where in the inequality we used that for any $v\in V(G)$, the graph $G[N(v)]$ is $P_3$-free.

Write $\hom(C_5,G)$ for the number of graph homomorphisms $C_5\rightarrow G$. In other words, $\hom(C_5,G)$ is the number of 5-tuples $(y_1,y_2,y_3,y_4,y_5)\in V(G)^5$ with $y_5y_1\in E(G)$ and $y_iy_{i+1}\in E(G)$ for every $1\leq i\leq 4$. Note that if $w\in S(u)\cap S(v)$ and $uv\in E(G)$, then there are at least $s^2$ such 5-tuples of the form $(u,v,y_3,w,y_5)$. Thus,
$\sum_{uv\in E(G)} |S(u)\cap S(v)|\leq \frac{\hom(C_5,G)}{s^2}$.
On the other hand, $G$ does not contain a 5-cycle. Hence, in any such 5-tuple, we must have $y_i=y_j$ for some $i\neq j$. Without loss of generality, assume that $y_1=y_4$. Then $y_1$, $y_2$ and $y_3$ form a triangle in $G$. Since $y_5$ is a neighbour of $y_1$ and $y_4=y_1$, the number of homomorphic $C_5$'s in $G$ is at most $O(t(G)\cdot \Delta(G))$. As we noted in (\ref{eqn:five cycle nhood intersections}), $t(G)=O(nd)$, so $\hom(C_5,G)=O(nd^2)$ and hence
\begin{equation}
    \sum_{uv\in E(G)} |S(u)\cap S(v)|\leq O\left(\frac{nd^2}{s^2}\right). \label{eqn:five cycle intersection}
\end{equation}

Finally, recall that the number of paths $uvw$ in $G$ with $s\leq d(u,w)<2s$ is $\Omega(\frac{nd^2}{s^{1/10}})$. Any such path has $w\in S(u)\cap N(v)$. Thus,
\begin{equation}
    \sum_{uv\in E(G)} (|S(u)\cap N(v)|+|S(v)\cap N(u)|)\geq \Omega\left(\frac{nd^2}{s^{1/10}}\right). \label{eqn:five cycle mixed intersection}
\end{equation}
Plugging in inequalities (\ref{eqn:five cycle nhood intersections}), (\ref{eqn:five cycle intersection}) and (\ref{eqn:five cycle mixed intersection}) to (\ref{eqn:five cycle arcsin}), we get
$$\sp(G)\geq \Omega(nd^{1/2}s^{2/5})-O(n)-O(n/s)\geq \Omega(nd^{1/2}s^{2/5}).$$

\paragraph{The lower bound $\Omega(d^3/s^{6/5})$.}
This part of the proof is somewhat similar to the proof of Theorem~\ref{thm:triangle square threshold}. Let $\nu nd^2$ be the number of paths $uvw$ in $G$ with $s\leq d(u,w)<2s$. Recall that $\nu=\Omega(s^{-1/10})$. Let $k=\lceil \frac{sn}{2 d^2}\rceil$. Note that $\binom{n}{2}2s$ is an upper bound for the number of paths $uvw$ with $d(u,w)< 2s$, so $\nu nd^2\leq n^2s$ and $\nu d^2/(sn)\leq 1$. Give every vertex a random label from $\{0,1,\dots,k+1\}$, where $0$ is chosen with probability $1/3$, each of $1,\dots,k$ is chosen with probability $p=\frac{\nu d^2}{10sn}$, and the remaining probability falls on $k+1$. This is feasible since $kp\leq (\frac{sn}{2 d^2}+1)\frac{\nu d^2}{10sn}\leq \frac{\nu}{20}+\frac{1}{10}\leq 2/3$. For every $j\in [k]$, let $B_j$ be the set of vertices with label~$j$.

Now, pick uniformly at random with repetition vertices $v_1,\dots,v_k$.
For $j\in[k]$, let $A_j$ be the set of vertices in $S(v_j)$ with label $0$ which do not belong to any other $S(v_i)$.

Let $X$ be the number of edges which go between $A_j$ and $B_j$ for some $j\in [k]$, let $Y$ be the number of edges inside some $A_j$ and let $Z$ be the number of edges inside some $B_j$.
\begin{claim}
    $\expn{Y}\leq O(\frac{kd^2}{s^2})$.
\end{claim}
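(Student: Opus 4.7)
The plan is to track when a given edge $uv$ ends up inside some $A_j$, and then sum the resulting probabilities over all edges of $G$. A necessary condition for $uv$ to contribute to $Y$ is that both $u$ and $v$ receive label $0$, which happens with probability $\frac{1}{9}$ by independence. A second necessary condition is that some $v_j$ satisfies $u,v\in S(v_j)$. Since $d(x,y)$ is symmetric in $x$ and $y$, the relation $z\in S(w)$ is symmetric, so $u,v\in S(v_j)$ is equivalent to $v_j\in S(u)\cap S(v)$. For fixed $j$, the probability of this event is exactly $|S(u)\cap S(v)|/n$, so by the union bound over $j\in[k]$ the probability of the second condition is at most $k|S(u)\cap S(v)|/n$.

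Combining these two bounds and using independence of the labels from the choice of the $v_j$'s, for each edge $uv\in E(G)$ the probability that $uv$ lies inside some $A_j$ is at most $\frac{k}{9n}|S(u)\cap S(v)|$. Summing over all edges of $G$ and applying linearity of expectation gives
\[
\expn{Y}\;\le\;\frac{k}{9n}\sum_{uv\in E(G)}|S(u)\cap S(v)|.
\]

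Now I would invoke the bound $\sum_{uv\in E(G)}|S(u)\cap S(v)|=O(nd^2/s^2)$ already established in~\eqref{eqn:five cycle intersection}: this is where the $C_5$-freeness of $G$ and the homomorphism count $\hom(C_5,G)=O(nd^2)$ (derived from $t(G)=O(nd)$) enter the argument. Substituting this estimate gives $\expn{Y}\le O\!\left(\frac{k}{n}\cdot\frac{nd^2}{s^2}\right)=O\!\left(\frac{kd^2}{s^2}\right)$, which is exactly the claim. There is no real obstacle here — the only subtle point is recognising that the symmetry of $S$ lets us move the condition "$v_j$ contains $u,v$ in its $S$-neighbourhood" to "$v_j$ lies in $S(u)\cap S(v)$", so that the already-proved bound on $\sum_{uv\in E(G)}|S(u)\cap S(v)|$ can be plugged in directly.
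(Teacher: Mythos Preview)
Your proof is correct and follows essentially the same approach as the paper: bound the probability that an edge $uv$ lies in some $A_j$ by $k|S(u)\cap S(v)|/n$ (using the symmetry $u\in S(v_j)\iff v_j\in S(u)$), sum over edges, and invoke~\eqref{eqn:five cycle intersection}. The paper's version is slightly terser --- it omits the factor $1/9$ from the label condition since only an upper bound is needed --- but the argument is the same.
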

\claimproof
If an edge $uv\in E(G)$ is in some $A_j$, then $v_j\in S(u)\cap S(v)$. Hence, the probability that $uv$ contributes to $Y$ is at most $k\frac{|S(u)\cap S(v)|}{n}$. Thus,
$$\expn{Y}\leq \sum_{uv\in E(G)} k\frac{|S(u)\cap S(v)|}{n}\leq O\left(\frac{kd^2}{s^2}\right),$$
where for the second inequality we used (\ref{eqn:five cycle intersection}). 
\endclaimproof
\begin{claim}
    $\expn{Z}\leq \frac{kp^2nd}{2}$.
\end{claim}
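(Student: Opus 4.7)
The plan is to bound $\expn{Z}$ by a straightforward application of linearity of expectation, exactly in parallel with Claim~\ref{claim2} in the proof of Theorem~\ref{thm:triangle square threshold}. I would fix an arbitrary edge $uv \in E(G)$ and compute the probability that $uv$ ends up inside some $B_j$. Since the random labels are assigned independently across vertices, for each fixed $j \in [k]$ the probability that both $u$ and $v$ receive label $j$ is exactly $p^2$.

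Next, I would observe that the events ``both $u$ and $v$ receive label $j$'' for distinct values of $j$ are mutually exclusive, because each vertex receives only one label. Hence the probability that $uv$ is an internal edge of some $B_j$ is at most $\sum_{j=1}^k p^2 = kp^2$. Summing over the $e(G) = nd/2$ edges of $G$ via linearity of expectation gives
\[
\expn{Z} = \sum_{uv \in E(G)} \bP(uv \in E(G[B_j]) \text{ for some } j) \leq \frac{nd}{2} \cdot kp^2 = \frac{kp^2 nd}{2},
\]
which is exactly the claimed bound.

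There is essentially no obstacle here; unlike the corresponding claims about $\expn{X}$ and $\expn{Y}$, the bound on $\expn{Z}$ does not use any structural property of $G$ beyond its edge count, because the labels that define the $B_j$'s are assigned independently of the choice of the vertices $v_1, \dots, v_k$ (and indeed of anything else). The only sanity check needed is that $p$ is a legitimate probability, which was already verified in the paragraph introducing the labelling.
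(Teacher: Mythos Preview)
Your proof is correct and is essentially the same as the paper's: both compute that the probability an edge lies in a fixed $B_j$ is $p^2$, then sum over the $k$ values of $j$ and the $nd/2$ edges. The paper phrases it by fixing $j$ first and computing $\expn{e(G[B_j])}=p^2e(G)$, whereas you fix the edge first, but the calculation is identical.
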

\claimproof
For any $j$, the expected number of edges in $G[B_j]$ is $p^2e(G)=p^2\frac{nd}{2}$. Summing over the $k$ possibilities for $j$, the claim follows.
\endclaimproof
\begin{claim}
    $\expn{X}\geq \frac{kp\nu d^3}{12s}$.
\end{claim}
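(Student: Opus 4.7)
The plan is to write $\expn{X}$ as a sum over edges and lower-bound each term. An edge $uw \in E(G)$ contributes to $X$ precisely when, for some $j \in [k]$, either $u \in A_j$ and $w \in B_j$, or the symmetric analogue holds. For fixed $uw$, the events $\{u \in A_j, w \in B_j\}$ (as $j$ varies) are pairwise disjoint, since by construction $u$ can lie in at most one of $A_1,\dots,A_k$; hence
$$\bP(\exists j\colon u \in A_j,\, w \in B_j) = \sum_{j=1}^k \bP(u \in A_j,\, w \in B_j).$$

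For a fixed $j$, the three defining conditions are independent. The label of $u$ is $0$ with probability $1/3$; the label of $w$ is $j$ with probability $p$; and among $v_1,\dots,v_k$ we need exactly $v_j$ to lie in $S(u)$, which happens with probability $\frac{|S(u)|}{n}(1-|S(u)|/n)^{k-1}$, using the symmetry $u \in S(v) \iff v \in S(u)$. The key observation is that the exponential factor is bounded below by a constant: inequality \eqref{eqn:upper bound on S(u)} gives $|S(u)| \le d^2/s$, and the choice $k = \lceil sn/(2d^2) \rceil$ yields $(k-1)|S(u)|/n \le 1/2$, so $(1-|S(u)|/n)^{k-1} \ge 1/2$.

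Summing over $j$ and adding the symmetric contribution (with $u$ and $w$ swapped) gives
$$\bP(uw \text{ contributes to } X) \ge \frac{kp}{6n}\bigl(|S(u)| + |S(w)|\bigr).$$
Summing over $uw \in E(G)$ and using $d$-regularity then yields
$$\expn{X} \ge \frac{kpd}{6n}\sum_{v \in V(G)} |S(v)|.$$
Finally, $\sum_{v} |S(v)|$ counts ordered pairs $(u,v)$ with $d(u,v) \in [s, 2s)$, and each such unordered pair accounts for fewer than $2s$ of the $\nu n d^2$ paths under consideration, so $\sum_v |S(v)| \ge \nu n d^2 / s$. Combining these estimates produces $\expn{X} \ge \frac{kp\nu d^3}{6s} \ge \frac{kp\nu d^3}{12s}$, as required.

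The proof presents no real technical obstacle: the calibration $k \asymp sn/d^2$ was chosen precisely so that the exponential factor $(1-|S(u)|/n)^{k-1}$ stays bounded below by a constant, after which everything reduces to a routine linearity-of-expectation computation and the path-counting estimate that gave the definition of $\nu$ in the first place.
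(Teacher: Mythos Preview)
Your proof is correct and follows essentially the same approach as the paper's: lower-bound the per-edge contribution via the independence of the labeling and the vertex sampling, use \eqref{eqn:upper bound on S(u)} together with $k=\lceil sn/(2d^2)\rceil$ to show the factor $(1-|S(u)|/n)^{k-1}\ge 1/2$, sum using $d$-regularity, and finish with the path-counting lower bound on $\sum_v |S(v)|$. The only cosmetic difference is that you obtain $\sum_v |S(v)|\ge \nu nd^2/s$ (by counting ordered pairs) and then discard the extra factor of $2$, whereas the paper directly states $\sum_v |S(v)|\ge \nu nd^2/(2s)$.
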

\claimproof
Let $uv\in E(G)$. For any $j$, the probability that $v_j\in S(u)$, but $v_i\not \in S(u)$ for every $j\neq i$ is
$$\frac{|S(u)|}{n}\left(1-\frac{|S(u)|}{n}\right)^{k-1}\geq \frac{|S(u)|}{n}\left(1-\frac{(k-1)|S(u)|}{n}\right)\geq \frac{|S(u)|}{n}\left(1-\frac{(k-1)d^2}{sn}\right)\geq \frac{|S(u)|}{2n},$$
where the second inequality uses (\ref{eqn:upper bound on S(u)}).
Thus, the probability that $uv$ contributes to $X$ is at least $k\frac{p}{6n}(|S(u)|+|S(v)|)$. Summing over all edges,
$$\expn{X}\geq \frac{kpd}{6n}\sum_{u\in V(G)} |S(u)|.$$
Since the number of paths $uvw$ with $s\leq d(u,w)<2s$ is $\nu nd^2$, there are at least $\nu nd^2/(2s)$ pairs $u,w$ with $w\in S(u)$, so
$\sum_{u\in V(G)}|S(u)|\geq \nu nd^2/(2s)$, from which the claim follows.
\endclaimproof
Combining the three claims, we get
$$\expn{X-Y-Z}\geq kd\left(\frac{p\nu d^2}{12s}-O(d/s^2)-p^2n/2\right)=kd\left(\frac{\nu^2 d^4}{120s^2n}-O(d/s^2)-\frac{\nu^2 d^4}{200 s^2n}\right).$$
If $\frac{\nu^2d^4}{s^2n}=O(d/s^2)$, then $d^3=O(n/\nu^2)\leq O(ns^{1/5})$, so the bound $\sp(G)\geq \Omega(n\sqrt{d}) \geq \Omega(d^3/s^{6/5})$ follows from Lemma~\ref{lemma:five cycle sparse}. Otherwise,
$$\expn{X-Y-Z}\geq \Omega\left(\frac{k\nu^2d^5}{s^2n}\right)\geq \Omega\left(\frac{\nu^2 d^3}{s}\right)\geq \Omega\left(\frac{d^3}{s^{6/5}}\right).$$
Therefore there exists an outcome in which $X-Y-Z\geq \Omega(d^3/s^{6/5})$, and then
$$\sp(G)\geq \sum_{j=1}^k \sp(G[A_j\cup B_j])\geq \frac{1}{2}\sum_{j=1}^k (e(A_j,B_j)-e(A_j)-e(B_j)) = \frac{1}{2}(X-Y-Z)\geq \Omega(d^3/s^{6/5}),$$
completing the proof.
\endproof

\section{Odd cycles} \label{sec:odd cycles}

In this section, we prove Theorem~\ref{thm:odd cycles}. As discussed in Section~\ref{subsec:odd cycles}, the tightness of the result follows from the known construction of dense pseudorandom $C_r$-free graphs due to Alon and Kahale~\cite{AK:98}.
It remains to show that, for fixed odd $r\ge 3$, any $C_r$-free graph with $m$ edges has surplus $\Omega_r(m^{(r+1)/(r+2)})$.
Since the case $r=3$ is covered by Alon's theorem, we may assume $r\ge 5$.
The proof is similar to the one for $C_5$ which we presented in Section~\ref{sec:illustration}, but more technical, and some new ideas are needed.

In the ``sparse case'', when the graph is $d$-degenerate, we have the bound $\Omega(m/\sqrt{d})$ on the surplus. The rest of this section is devoted to the proof of the following lemma which we use in the ``dense'' case.
We set 
\begin{align}
	\alpha=\frac{r}{r+1} \mbox{ and } \beta=\frac{2r+1}{2r+2}.\label{exponents}
\end{align}
Note that $0<\alpha<\beta$ and $\alpha+\beta\le 2$.

\begin{lemma} \label{lemma:general cycle dense}
	Let $G$ be a $C_r$-free graph with $n$ vertices and average degree~$d$. Then $\sp(G)=\Omega_r(n^{\alpha}d^{\beta})$.
\end{lemma}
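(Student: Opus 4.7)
The plan is to generalise the two-bound strategy of Section~\ref{subsec:five cycle} from $k=2$ to general $k := (r-1)/2 \ge 2$, working with walks of length $k$ in place of paths of length $2$. Let $W_k(u,v)$ denote the number of walks of length $k$ from $u$ to $v$ in $G$. First, apply Lemma~\ref{lemma:regularize basic} with the exponents $(\alpha,\beta)$ of~\eqref{exponents} to reduce to the almost-regular case $\delta(G) \ge d/C_r$, $\Delta(G) \le C_r d$; the regime of bounded $d$ is absorbed by Lemma~\ref{lemma:five cycle sparse}, so assume $d$ is large. Since the total number of length-$k$ walks in $G$ is $\Theta_r(nd^k)$, a dyadic pigeonhole with a small sub-polynomial weight $s^{-\delta_r}$ produces $s \ge 1$ and $\nu = \Omega_r(s^{-\delta_r})$ such that the pairs $(u,v)$ with $s \le W_k(u,v) < 2s$ account for at least $\nu \cdot n d^k$ walks. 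Set $S(v) := \{u : s \le W_k(u,v) < 2s\}$; then $|S(v)| \cdot s \le d^k$ since the LHS is at most the total number of length-$k$ walks from $v$.

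For the SDP bound, define $\vx^v \in \bR^{V(G)}$ with coordinates $-1/\sqrt{d}$ on $N(v) \setminus S(v)$, $\sqrt{s}/d^{k/2}$ on $S(v) \setminus N(v)$, and their sum on $N(v) \cap S(v)$ (mirroring the $C_5$ construction). One checks $\|\vx^v\|^2 = O(1)$, and the inner product on each edge $uv$ decomposes into a negative main term in $|N(u) \cap S(v)| + |N(v) \cap S(u)|$ (with coefficient $\sqrt{s}/d^{(k+1)/2}$) plus positive errors in $|N(u) \cap N(v)|$ and $|S(u) \cap S(v)|$. Summing over edges: the main term is $\Omega_r(\nu n d^{1/2} s^{1/2})$ by the dyadic choice of $s$; the triangle error is $O_r(n)$, since $C_r$-freeness forces $G[N(v)]$ to be $P_{r-2}$-free (hence with $O_r(d)$ edges, giving $t(G) = O_r(nd)$); and the $|S \cap S|$ error is controlled via $|S(u) \cap S(v)| \le \hom(C_r, G)/s^2$, since any such $w$ with $uv \in E(G)$ yields $\ge s^2$ homomorphic $r$-cycles by concatenating length-$k$ walks $u \to w$ and $w \to v$ with the edge $vu$, together with the bound $\hom(C_r, G) = O_r(n d^{r-1})$ in $C_r$-free graphs. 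Corollary~\ref{cor:general SDP} then yields $\sp(G) = \Omega_r(\nu n d^{1/2} s^{1/2})$.

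For the sparse-cut bound, I mimic the random sampling of Section~\ref{subsec:five cycle} with the set $S$ above: label vertices randomly in $\{0, 1, \dots, K+1\}$ with $K = \Theta(ns/d^k)$, pick uniform $v_1, \dots, v_K$, let $A_j$ be the label-$0$ vertices of $S(v_j) \setminus \bigcup_{i \ne j} S(v_i)$ and $B_j$ the vertices labelled $j$, and apply Lemma~\ref{lemma:surplus additive}. The same estimates on $|S(v)|$ and $|S(u) \cap S(v)|$ yield $\sp(G) = \Omega_r(\nu^2 d^{(r+1)/2}/s)$. Taking the weighted geometric mean of the two bounds with weights $\alpha = r/(r+1)$ and $1-\alpha = 1/(r+1)$ gives
\[
\sp(G) = \Omega_r\bigl(\nu^{(r+2)/(r+1)}\, n^{\alpha} d^{\beta}\, s^{(r-2)/(2(r+1))}\bigr),
\]
and choosing $\delta_r < (r-2)/(2(r+2))$ ensures that the net $s$-exponent remains non-negative, completing the proof.

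The main obstacle is the bound $\hom(C_r, G) = O_r(n d^{r-1})$ in a $C_r$-free graph: this requires a case analysis over the non-injective collapse patterns of $C_r$, in each of which one must combine $t(G) = O_r(nd)$ with degree bounds to save a factor of $d$ over the trivial count. A secondary subtlety is that $\nu$ enters the SDP bound linearly but the cut bound quadratically, so the dyadic weight exponent $\delta_r$ must be chosen with care: too aggressive a pigeonhole hurts the cut bound, while too mild may fail to isolate a useful scale $s$. A final book-keeping point is verifying that the positive error terms in both steps are strictly dominated by the main negative terms under the almost-regularity assumption, a condition that reduces in each case to $d$ being sufficiently large.
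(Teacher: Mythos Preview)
Your outline has two genuine gaps, both of which the paper's argument is specifically designed to overcome.

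\textbf{The SDP main term.} For $k>2$ the lower bound $\sum_{uv\in E(G)}\bigl(|N(u)\cap S(v)|+|N(v)\cap S(u)|\bigr)=\Omega_r(\nu n d^{(k+2)/2})$ is not justified. A pigeonholed $k$-walk $u_0u_1\dots u_k$ only gives $u_0\in S(u_k)$; it contributes to the cross-term for some edge $uv$ only if, say, $u_0\in N(u_{k-1})$, which you have no reason to expect. (For $k=2$ this happens automatically, since $u_0u_1$ is an edge and $u_0\in S(u_2)$, $u_0\in N(u_1)$.) In fact the quantity you need is $\sum_{w\in S(v)} d(v,w)$ summed over $v$, and nothing prevents the codegrees $d(v,w)$ from being zero on the pairs singled out by $S$. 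The paper avoids this by pairing the $q$th ``neighbourhood'' $S$ with the $(q{-}1)$st ``neighbourhood'' $T$ rather than with $N$; then a good $q$-path $u_0\dots u_q$ gives $u_0\in T(u_{q-1})\cap S(u_q)$ for the edge $u_{q-1}u_q$, and one obtains $\sp(G)=\Omega_r\bigl(\nu n d^{1/2}(s_{0,q}/s_{0,q-1})^{1/2}\bigr)$ for each $q$, which telescopes via AM--GM to $s_{0,\ell}^{1/(2(\ell-1))}$ in place of your $s^{1/2}$.

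\textbf{The error term.} Your control of $\sum_{uv\in E}|S(u)\cap S(v)|$ via $\hom(C_r,G)$ only yields $O_r(nd^{2k}/s^2)$ (the bound $\hom(C_r,G)\le n\Delta^{r-1}$ is just $\sum|\lambda_i|^r\le \Delta^{r-2}\sum\lambda_i^2$ and uses no $C_r$-freeness). This is too weak: in the sampling step it forces $\nu^2 d\gg n$ for $X$ to dominate $Y$, which fails throughout the interesting range; in the SDP step it forces $\nu s^{3/2}\gg d^{k-1/2}$, which fails for small $s$. The paper instead proves $\sum_{uv\in E}|S(u)\cap S(v)|=O_r(nd^k/(\nu s))$ directly (Lemma~\ref{lem:intersection sums}), by pigeonholing on \emph{all} the $s_{i,j}$ and showing, via the $C_r$-freeness, that the auxiliary graph recording ``two good $j$-paths from $z$ to $R$ that close into a $C_{2j+1}$'' has bounded average degree (Lemma~\ref{lemma:smallavgdeg}). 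This refined bound is what makes both the SDP and the sampling arguments go through.
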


Before proving it, let us see how it implies Theorem~\ref{thm:odd cycles}. We will need the following lemma.

\begin{lemma} \label{lemma:good partition}
    Let $G$ be a graph and let $d$ be a non-negative real number. Then there exists a bipartition $(S,T)$ of $V(G)$ such that $G[S]$ is $d$-degenerate and $G[T]$ has minimum degree at least~$d$.
\end{lemma}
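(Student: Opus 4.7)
The plan is to prove Lemma~\ref{lemma:good partition} by the standard ``peeling'' procedure: iteratively strip off vertices whose current degree is below $d$, place them into $S$, and let $T$ be whatever remains. This is a classical construction (essentially the ``$d$-core'' argument), and I expect no real obstacle---only bookkeeping to verify the two required properties.

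More concretely, I would define a sequence $V(G)=T_0\supseteq T_1\supseteq \dots\supseteq T_k$ as follows. At step $i\ge 1$, if $T_{i-1}$ contains a vertex $v_i$ with $\deg_{G[T_{i-1}]}(v_i)<d$, set $T_i=T_{i-1}\setminus\{v_i\}$; otherwise stop. The process terminates since $|T_i|$ strictly decreases. Put $T=T_k$ and $S=\{v_1,\dots,v_k\}$, so $(S,T)$ is a bipartition of $V(G)$.

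Verifying $G[T]$ has minimum degree at least $d$ is immediate: the process stopped precisely because no vertex of $T=T_k$ had fewer than $d$ neighbours in $G[T]$. For the degeneracy of $G[S]$, I would consider the ordering $v_1,v_2,\dots,v_k$ of $S$ given by the removal order. Since $T_{i-1}=V(G)\setminus\{v_1,\dots,v_{i-1}\}$, the condition $\deg_{G[T_{i-1}]}(v_i)<d$ says exactly that $v_i$ has fewer than $d$ neighbours in $\{v_{i+1},\dots,v_k\}\cup T\supseteq\{v_{i+1},\dots,v_k\}$, so in particular
\[
\bigl|N_G(v_i)\cap\{v_{i+1},\dots,v_k\}\bigr|<d.
\]
Hence in the ordering $v_1,\dots,v_k$ of $V(G[S])$, every vertex has fewer than $d$ later neighbours, which is the defining property of $d$-degeneracy (equivalently, every induced subgraph of $G[S]$ has a vertex of degree less than $d$, since one may apply the same consideration to the minimum-index vertex of the subgraph).

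The only point worth double-checking is the convention of $d$-degeneracy in the paper (``$\le d$'' versus ``$<d$''), but since our construction in fact gives strictly less than $d$ neighbours among the later vertices, either convention is satisfied. No induction, no probabilistic or SDP input, and no interaction with the $C_r$-free hypothesis is needed; the lemma is purely structural and will feed into the proof of Theorem~\ref{thm:odd cycles} by letting us reduce to a ``dense'' piece (on which Lemma~\ref{lemma:general cycle dense} applies) and a ``sparse'' piece (on which the $\Omega_r(m/\sqrt{d})$ bound applies), combined via Lemma~\ref{lemma:surplus additive}.
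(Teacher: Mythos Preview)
Your proof is correct and follows essentially the same ``peeling'' (or $d$-core) argument as the paper: iteratively move a vertex of current degree less than $d$ from $T$ into $S$, and then observe that the removal order certifies $d$-degeneracy of $G[S]$ while the stopping condition gives $\delta(G[T])\ge d$. If anything, you are slightly more explicit than the paper in spelling out why the removal order witnesses degeneracy.
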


\begin{proof}
Start with $S_0=\emptyset$ and $T_0=V(G)$. If there exists a vertex $v\in T_0$ with less than $d$ neighbours in $T_0$, then set $S_1=S_0\cup \{v\}$ and $T_1=T_0\setminus \{v\}$. More generally, for $i\geq 0$, if there exists $v\in T_i$ with less than $d$ neighbours in $T_i$, then let $S_{i+1}= S_i\cup \{v\}$ and let $T_{i+1}=T_i\setminus \{v\}$. Eventually the process stops and we are left with a partition $(S_k,T_k)$. Take $S=S_k$, $T=T_k$. By definition, $G[S]$ is $d$-degenerate and $G[T]$ has minimum degree at least~$d$.
\end{proof}

\lateproof{Theorem~\ref{thm:odd cycles}}
Let $G$ be a $C_r$-free graph with $m$ edges. Let $d=m^{2/(r+2)}$ and take a partition $(S,T)$ of $V(G)$ as in Lemma~\ref{lemma:good partition}.
If $e(S,T)\geq 2m/3$, then clearly $\sp(G)\geq m/6$. If $e(S)\geq m/6$, then by Lemma~\ref{lemma:five cycle sparse}, $$\sp(G)\ge \sp(G[S])\geq \Omega_r\left(\frac{m/6}{\sqrt{d}}\right)=\Omega_r(m^{1-1/(r+2)})$$ as desired. Finally, suppose that $e(T)\geq m/6$ and write $n$ for the number of vertices in $G[T]$. Clearly, $nd\geq 2e(T)\geq m/3$. Therefore, by Lemma~\ref{lemma:general cycle dense}, $$\sp(G)\ge \sp(G[T])\geq \Omega_r(n^{\alpha}d^{\beta}) = \Omega_r\left(m^\alpha d^{\beta-\alpha}\right)= \Omega_r(m^{\alpha+2(\beta-\alpha)/(r+2)}) = \Omega_r(m^{1-1/(r+2)}),$$ since $\alpha+2(\beta-\alpha)/(r+2)= 1-1/(r+2)$.
\endproof

It remains to prove Lemma~\ref{lemma:general cycle dense}.
By Lemma~\ref{lemma:regularize basic} (applied with $\alpha,\beta$ as defined in~\eqref{exponents}), it suffices to prove Lemma~\ref{lemma:general cycle dense} for $C_r$-free graphs with $n$ vertices, average degree $d$, minimum degree $\Omega_r(d)$ and maximum degree $O_r(d)$. 
Assume throughout that $G$ is such a graph. We may of course also assume that $d$ is sufficiently large.

Let $\eps>0$ be chosen sufficiently small, depending only on~$r$. (Hence, implicit constants depending on $\eps$ will also be indicated by a subscript~$r$.)
Set $$\ell=\frac{r-1}{2}.$$
Recall from Section~\ref{subsec:five cycle} that it was crucial to find many $2$-paths $uvw$ for which the codegree of $u$ and $w$, that is, the number of $2$-paths from $u$ to $w$, was roughly the same. Here, we need to generalize this concept to paths of length~$\ell$. For some steps of our argument, it will actually be more convenient to work with walks rather than paths.
For two vertices $u,v\in V(G)$ and an integer $j$, we write $h_j(u,v)$ for the number of walks in $G$ of length $j$ from $u$ to~$v$.

Using the minimum degree condition, we easily observe that there are $\Omega_r(nd^\ell)$ paths of length $\ell$ in~$G$.
Now, the first crucial step in our argument is to select a large subset of $\ell$-paths which exhibit a certain regularity condition.

\begin{prop}\label{prop:pigeonholing}
	There exist $c=c(r)>0$, positive integers $(s_{i,j})_{0\le i<j\le \ell}$ and at least $$c\left(\prod_{i<j}s_{i,j}\right)^{-\eps} nd^\ell$$ paths $u_0u_1\dots u_\ell$ in $G$ with the property that 
	\begin{align}
		s_{i,j} \le h_{j-i}(u_i,u_j)< 2s_{i,j}\label{good}
	\end{align}
	for all $0\le i<j\le \ell$.
\end{prop}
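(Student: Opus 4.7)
The plan is a weighted dyadic pigeonholing on the $\Omega_r(nd^\ell)$ paths of length~$\ell$ that $G$ contains (as noted in the sentence just preceding the statement, which itself follows from $\delta(G)=\Omega_r(d)$, $\Delta(G)=O_r(d)$ and $d$ being large: starting from any of the $n$ vertices, each of the $\ell$ extensions is available in $\delta(G)-O(\ell)=\Omega_r(d)$ ways). The reason for choosing a dyadic (rather than a uniform) partition is that one wants the loss incurred by large values of $s_{i,j}$ to be absorbed exactly into the factor $\left(\prod_{i<j}s_{i,j}\right)^{-\eps}$ in the conclusion.

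Concretely, for every length-$\ell$ path $P=u_0u_1\dots u_\ell$ and every pair $0\le i<j\le\ell$, the sub-walk $u_iu_{i+1}\dots u_j$ already witnesses $h_{j-i}(u_i,u_j)\ge 1$, so I would define
$$s_{i,j}(P)=2^{\lfloor \log_2 h_{j-i}(u_i,u_j)\rfloor},$$
a positive integer (a power of $2$) satisfying $s_{i,j}(P)\le h_{j-i}(u_i,u_j)<2s_{i,j}(P)$. This sorts the paths into classes $\cP(\mathbf{s})$ indexed by dyadic tuples $\mathbf{s}=(s_{i,j})_{0\le i<j\le\ell}$.

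Now suppose for a contradiction that $|\cP(\mathbf{s})|<c\left(\prod_{i<j}s_{i,j}\right)^{-\eps}nd^\ell$ for every dyadic $\mathbf{s}$. Summing over $\mathbf{s}$ and using that the weighted sum factorises across the pairs $(i,j)$,
$$\Omega_r(nd^\ell)\ \le\ \sum_{\mathbf{s}}|\cP(\mathbf{s})|\ <\ cnd^\ell\prod_{0\le i<j\le\ell}\sum_{k\ge 0}2^{-k\eps}\ =\ cnd^\ell\left(\tfrac{1}{1-2^{-\eps}}\right)^{\binom{\ell+1}{2}}.$$
Since $\eps$ and $\ell$ depend only on $r$, the right-hand side is $O_r(c\cdot nd^\ell)$, which contradicts the left-hand side once $c=c(r)>0$ is chosen small enough. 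Hence some $\mathbf{s}$ realises the required count.

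The only (mild) obstacle is the convergence of the weighted sum: this is exactly why the exponent in the conclusion is $-\eps$ rather than $0$, since $\sum_{k\ge 0}2^{-k\eps}$ converges precisely for $\eps>0$, and there are only $\binom{\ell+1}{2}=O_r(1)$ factors, so the total constant stays bounded in terms of $r$. No further ingredient beyond the already-established lower bound on the number of $\ell$-paths is required.
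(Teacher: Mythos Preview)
Your proof is correct and is essentially identical to the paper's: both sort the $\ell$-paths into dyadic classes according to $\lfloor\log_2 h_{j-i}(u_i,u_j)\rfloor$, assume for contradiction that every class is too small, and obtain a contradiction by summing the resulting geometric series (with $\binom{\ell+1}{2}$ factors, each converging since $\eps>0$). Your write-up is in fact slightly more explicit about why $h_{j-i}(u_i,u_j)\ge 1$ and about the value of the geometric sum.
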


\begin{proof}
	We apply dyadic pigeonholing. For each given path $u_0u_1\dots u_\ell$, there are unique non-negative integers $(b_{i,j})_{0\le i<j\le \ell}$ such that $2^{b_{i,j}} \le h_{j-i}(u_i,u_j)< 2^{b_{i,j}+1}$. 

	Suppose, for a contradiction, that for every choice of $(b_{i,j})_{0\le i<j\le \ell}$, there are at most $$c\left(\prod_{i<j}2^{b_{i,j}}\right)^{-\eps} nd^\ell$$ corresponding paths. Summing over all choices for the $b_{i,j}$, we deduce that the total number of $\ell$-paths is at most $cnd^\ell \left(\sum_{b=0}^\infty 2^{-\eps b} \right)^{\binom{\ell+1}{2}}=cnd^\ell O_r(1)$. Choosing $c$ small enough, we reach a contradiction.
\end{proof}

From now on, we fix $c$ and $(s_{i,j})_{0\le i<j\le \ell}$ which satisfy the conclusion of Proposition~\ref{prop:pigeonholing}, and call the paths $u_0u_1\dots u_\ell$ in $G$ which satisfy~\eqref{good} \defn{good}. Hence, there are at least $\Omega_r\left(\nu nd^\ell\right)$ good $\ell$-paths in~$G$, where we set $$\nu=\left(\prod_{i<j}s_{i,j}\right)^{-\eps}.$$
Note that we clearly have $s_{i,j}\le \Delta(G)^{j-i-1} = d^{O(\ell)}$ for all $0\le i<j\le \ell$. Thus, for small enough~$\eps$, we have, say, 
\begin{align}
\nu\ge d^{-1/10}. \label{nu bound}
\end{align}

We extend the notion of a good path to smaller lengths in the obvious way. That is, for $q\le \ell$, a path $u_0u_1\dots u_q$ is called \defn{good} if~\eqref{good} holds for all $0\le i<j\le q$.
Since $G$ has maximum degree $O_r(d)$, every $q$-path can be extended into at most $O_r(d^{\ell-q})$ $\ell$-paths. Moreover, every subpath of a good $\ell$-path (with the same start vertex) is also good. Hence, for each $q\le \ell$, there are at least $\Omega_r\left(\nu nd^q\right)$ good $q$-paths in~$G$.

We now state the key lemmas which provide lower bounds on the surplus.
Using the SDP (semidefinite programming) method, we will prove the following.

\begin{lemma} \label{lemma:SDPsurplus}
	For each $2\leq q\leq \ell$, we have
	$$\sp(G) = \Omega_r\left(\nu nd^{1/2} (s_{0,q}/s_{0,q-1})^{1/2}\right).$$
\end{lemma}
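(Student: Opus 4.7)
The plan is to generalise the semidefinite programming argument from the proof of Theorem~\ref{thm:five cycle}, with vectors built from the two codegree scales $s_{0,q-1}$ and $s_{0,q}$ that appear in the target bound. For each $v\in V(G)$, define the two sets
$$A(v) := \{w\in V(G): s_{0,q-1}\leq h_{q-1}(v,w)<2s_{0,q-1}\},\quad B(v):=\{w\in V(G): s_{0,q}\leq h_q(v,w)<2s_{0,q}\}.$$
Since the total number of walks of length $j$ starting at $v$ is at most $\Delta(G)^j = O_r(d^j)$, Markov's inequality yields $|A(v)|=O_r(d^{q-1}/s_{0,q-1})$ and $|B(v)|=O_r(d^q/s_{0,q})$. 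Define $\vx^v\in \bR^{V(G)}$ by setting $\vx^v_u=-a$ for $u\in A(v)\setminus B(v)$, $\vx^v_u=b$ for $u\in B(v)\setminus A(v)$, $\vx^v_u=-a+b$ for $u\in A(v)\cap B(v)$, and $\vx^v_u=0$ otherwise, where $a=\sqrt{s_{0,q-1}/d^{q-1}}$ and $b=\sqrt{s_{0,q}/d^q}$. These choices ensure $\|\vx^v\|^2=O_r(1)$.

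Expanding gives, for any edge $uv\in E(G)$,
$$\langle \vx^u,\vx^v\rangle = a^2\,|A(u)\cap A(v)| + b^2\,|B(u)\cap B(v)| - ab\bigl(|A(u)\cap B(v)|+|B(u)\cap A(v)|\bigr).$$
To lower bound the (negative) cross term, fix any good $q$-path $u_0u_1\dots u_q$; by goodness $h_{q-1}(u_{q-1},u_0)\in [s_{0,q-1},2s_{0,q-1})$ and $h_q(u_q,u_0)\in [s_{0,q},2s_{0,q})$, so $u_0\in A(u_{q-1})\cap B(u_q)$. Thus the triple $(u_{q-1},u_q,u_0)$ contributes via the edge $u_{q-1}u_q$ to $\sum_{uv\in E(G)}|A(u)\cap B(v)|$. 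Two distinct good $q$-paths yielding the same triple differ only in their middle vertices, which constitute walks of length $q-1$ from $u_0$ to $u_{q-1}$, and by goodness there are at most $h_{q-1}(u_0,u_{q-1})<2s_{0,q-1}$ such walks. Since Proposition~\ref{prop:pigeonholing} supplies $\Omega_r(\nu nd^q)$ good $q$-paths,
$$\sum_{uv\in E(G)} \bigl(|A(u)\cap B(v)| + |B(u)\cap A(v)|\bigr) \geq \Omega_r\!\left(\frac{\nu n d^q}{s_{0,q-1}}\right),$$
and Corollary~\ref{cor:general SDP} then produces the desired contribution $ab\cdot\nu nd^q/s_{0,q-1} = \Omega_r(\nu nd^{1/2}\sqrt{s_{0,q}/s_{0,q-1}})$ to $\sp(G)$.

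The main technical obstacle is showing that the two diagonal ``positive'' terms $\sum_{uv\in E(G)}|A(u)\cap A(v)|$ and $\sum_{uv\in E(G)}|B(u)\cap B(v)|$ are of strictly smaller order than the cross term. A triple $(u,v,w)$ contributing to the $B$-sum produces at least $s_{0,q}^2$ closed walks of length $2q+1$ through the edge $uv$ and the vertex $w$; when $q=\ell$ this length is exactly $r$, and the $C_r$-free hypothesis forces any such closed walk to be non-injective, so that it decomposes into a strictly shorter closed walk attached at a repeated vertex. A careful count of such degenerate configurations should give the required bound, in direct analogy with the estimate $\hom(C_5,G)=O(t(G)\cdot \Delta(G))$ derived in Section~\ref{subsec:five cycle}. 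For $q<\ell$, and for the analogous $A\cap A$ estimates (closed walks of length $2q-1\leq r-2$), $C_r$-freeness cannot be invoked directly; instead one expects to iterate the dyadic pigeonholing of Proposition~\ref{prop:pigeonholing} together with the trivial degree bound $h_j(u,v)\leq \Delta(G)^j$, relying on the slack $\nu \geq d^{-1/10}$ from~\eqref{nu bound} to absorb the losses. Carrying out this case analysis cleanly is expected to be the most delicate part of the proof.
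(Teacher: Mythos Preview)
Your setup mirrors the paper's SDP argument, and your lower bound on the cross term $\sum_{uv}(|A(u)\cap B(v)|+|B(u)\cap A(v)|)$ via good paths is essentially the same as Proposition~\ref{prop:intersection sums gain}. The gap is in the diagonal terms, which you yourself flag as the delicate part but do not actually resolve.

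The approach you sketch for $q<\ell$ and for the $A\cap A$ sum --- ``iterate the dyadic pigeonholing together with the trivial degree bound'' --- does not work. Crude walk-counting gives only $\sum_{uv}|B(u)\cap B(v)|\le W/s_{0,q}^2$ where $W$ is the number of closed $(2q{+}1)$-walks, and there is no useful bound on $W$: for $q<\ell$ the graph may contain many genuine copies of $C_{2q+1}$, so no $\hom(C_{2q+1},G)$ estimate is available. The same issue afflicts the $A\cap A$ sum even when $q=\ell$, since the relevant closed-walk length there is $2q-1=r-2<r$. The slack $\nu\ge d^{-1/10}$ is nowhere near enough to absorb these losses.

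The missing idea is that $C_r$-freeness \emph{does} control these shorter odd closed walks, but only in a structured sense. The paper first introduces a random partition $V(G)=U_0\cup\dots\cup U_q$ and restricts to \emph{partite} good paths; then two such paths sharing both endpoints can only meet at vertices with the same index, so the last common index $i$ yields a genuine $C_{2j+1}$ with $j=q-i\le\ell$. The key combinatorial input (Lemma~\ref{lemma:smallavgdeg}) then shows that, with the apex $z=u_i$ fixed, the auxiliary graph on possible far-endpoints --- edges recording the existence of such a $C_{2j+1}$ --- cannot contain a path of length $r-2j$ (such a path would concatenate to a $C_r$ in $G$), and therefore has $O_r(1)$ average degree. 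This is what produces the estimate $\sum_{uv}|S(u)\cap S(v)|=O_r(nd^q/(\nu s_{0,q}))$ of Lemma~\ref{lem:intersection sums} uniformly for all $2\le q\le\ell$, and the analogous bound for the $T$-sets. Without this argument, and without the partition that makes the intersection pattern of the two paths tractable, your outline does not go through.
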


\begin{cor} \label{cor:SDPsurplus}
	We have
	$$\sp(G) = \Omega_r\left(\nu nd^{1/2} s_{0,\ell}^{1/2(\ell-1)}\right).$$
\end{cor}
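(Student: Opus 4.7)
The plan is to deduce the corollary directly from Lemma~\ref{lemma:SDPsurplus} by taking a telescoping geometric mean of the $\ell-1$ bounds obtained by varying $q$. Since Lemma~\ref{lemma:SDPsurplus} does the heavy lifting, the corollary should be a short calculation.

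First, I would apply Lemma~\ref{lemma:SDPsurplus} for each $q \in \{2, 3, \dots, \ell\}$ and multiply all $\ell-1$ resulting inequalities together, obtaining
$$\sp(G)^{\ell-1} \;=\; \Omega_r\!\left( (\nu n d^{1/2})^{\ell-1} \prod_{q=2}^{\ell}(s_{0,q}/s_{0,q-1})^{1/2} \right).$$
The product over $q$ telescopes to $(s_{0,\ell}/s_{0,1})^{1/2}$.

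Next, I would observe that for any good path $u_0u_1\dots u_\ell$, the pair $u_0u_1$ is simply an edge of $G$, so the number of walks of length $1$ from $u_0$ to $u_1$ equals $1$. The defining inequality $s_{0,1}\le h_1(u_0,u_1) < 2s_{0,1}$ from Proposition~\ref{prop:pigeonholing} therefore forces $s_{0,1}=1$. Hence the telescoped product equals $s_{0,\ell}^{1/2}$, which yields
$$\sp(G)^{\ell-1} \;=\; \Omega_r\!\left( (\nu n d^{1/2})^{\ell-1} s_{0,\ell}^{1/2} \right).$$

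Taking the $(\ell-1)$-th root of both sides produces the claimed bound $\sp(G)=\Omega_r(\nu n d^{1/2} s_{0,\ell}^{1/(2(\ell-1))})$, interpreting the exponent $1/2(\ell-1)$ in the statement as $1/(2(\ell-1))$. There is no real obstacle here: once Lemma~\ref{lemma:SDPsurplus} is in hand, the only mild point is recognising that $s_{0,1}=1$ so that the telescoping eliminates the denominator cleanly.
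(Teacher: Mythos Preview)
Your argument is correct and is essentially the same as the paper's: both rely on the telescoping identity $\prod_{q=2}^{\ell}(s_{0,q}/s_{0,q-1})^{1/2}=s_{0,\ell}^{1/2}$ together with $s_{0,1}=1$. The only cosmetic difference is that the paper applies AM--GM to pick a single $q$ with $(s_{0,q}/s_{0,q-1})^{1/2}\ge s_{0,\ell}^{1/(2(\ell-1))}$ and then invokes Lemma~\ref{lemma:SDPsurplus} once, whereas you multiply all $\ell-1$ instances of the lemma and take the $(\ell-1)$-th root.
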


\begin{proof}
By the AM--GM inequality,
$$\frac{1}{\ell-1} \sum_{q=2}^\ell (s_{0,q}/s_{0,q-1})^{1/2} \ge \left(\prod_{q=2}^\ell (s_{0,q}/s_{0,q-1})^{1/2}\right)^{1/(\ell-1)} = s_{0,\ell}^{1/2(\ell-1)}$$
since $s_{0,1}=1$. We infer that there exists $2\leq q\leq \ell$ with $(s_{0,q}/s_{0,q-1})^{1/2}\ge s_{0,\ell}^{1/2(\ell-1)}$.
Lemma~\ref{lemma:SDPsurplus} then implies the claim.
\end{proof}

Moreover, using random neighbourhood sampling, we will prove the following bound on the surplus. We actually only need this for $q=\ell$.

\begin{lemma} \label{lemma:nhoodsurplus}
	For each $2\leq q\leq \ell$, we have
	$$\sp(G) = \Omega_r\left(\nu^2 d^{q+1}/s_{0,q}\right).$$
\end{lemma}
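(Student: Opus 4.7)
The plan is to generalize the second part of the proof of Theorem~\ref{thm:five cycle}, working with good $q$-paths in place of good $2$-paths. For each $u \in V(G)$ let $N_q^*(u)$ denote the set of endpoints of good $q$-paths starting at $u$. By the goodness condition, every $w \in N_q^*(u)$ accounts for at least $s_{0,q}$ of the at most $O_r(d^q)$ walks of length $q$ from $u$, so $|N_q^*(u)| \leq O_r(d^q/s_{0,q})$. Conversely, each good $q$-path has endpoint pair $(u, w)$ counted at most $2 s_{0,q}$ times in this way and there are $\Omega_r(\nu nd^q)$ good $q$-paths, giving $\sum_u |N_q^*(u)| \geq \Omega_r(\nu nd^q/s_{0,q})$. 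I would then take $k = \lceil s_{0,q}n/(C d^q) \rceil$ and $p = \nu d^q/(C' s_{0,q} n)$ for suitable constants $C = C(r)$, $C' = C'(r)$, label each vertex independently as $0$ with probability $1/3$, as $j \in [k]$ with probability $p$, and as $k+1$ with the remaining probability, sample vertices $v_1, \ldots, v_k$ independently and uniformly from $V(G)$, and set $B_j = \{v : \text{label}(v) = j\}$ together with $A_j = \{w \in N_q^*(v_j) : \text{label}(w) = 0 \text{ and } w \notin N_q^*(v_i) \text{ for all } i \neq j\}$. As in the $C_5$ proof, let $X$, $Y$, $Z$ count the edges between some $A_j$ and $B_j$, inside some $A_j$, and inside some $B_j$, respectively.

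A direct computation, mirroring the one in Theorem~\ref{thm:five cycle}, yields $\expn{X} \geq \Omega_r(\nu^2 d^{q+1}/s_{0,q})$ (using the minimum-degree lower bound $\delta(G) = \Omega_r(d)$ together with $\sum_u |N_q^*(u)| \geq \Omega_r(\nu nd^q/s_{0,q})$) and $\expn{Z} \leq k p^2 nd/2 = O_r(\nu^2 d^{q+1}/s_{0,q})$, with hidden constant that can be made arbitrarily small by increasing $C'$. The main obstacle will be bounding $\expn{Y} \leq (k/n) \sum_{uv \in E(G)} |N_q^*(u) \cap N_q^*(v)|$. Each triple $(u, v, w)$ in this sum gives a pair of good $q$-paths from $u$ and $v$ to $w$, and concatenating them with the edge $uv$ produces closed walks of length $2q+1$ in $G$; since $h_q(u, w), h_q(v, w) \geq s_{0,q}$ by goodness, each such triple accounts for at least $s_{0,q}^2$ closed walks, so
\[
\sum_{uv \in E(G)} |N_q^*(u) \cap N_q^*(v)| \leq \operatorname{tr}(A^{2q+1})/(2 s_{0,q}^2).
\]
For the case $q = \ell$ that we actually need, $2q+1 = r$, and since $G$ is $C_r$-free every closed walk of length $r$ must identify a pair of non-adjacent vertices, so $\operatorname{tr}(A^r)$ decomposes as a sum over identification patterns of counts of pairs of shorter closed walks sharing a vertex. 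A case analysis over the cyclic distance of the identification, combined with the triangle bound $t(G) = O_r(nd)$ (which follows from $C_r$-freeness since $G[N(v)]$ is $P_{r-2}$-free and hence has $O_r(d)$ edges by the Erd\H{o}s--Gallai theorem), provides the required estimate on $\operatorname{tr}(A^r)$.

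Finally I would handle the resulting dichotomy. In the ``dense'' regime where the above estimate makes $\expn{Y}$ at most $\expn{X}/4$ (roughly when $\nu^2 d^3 \gtrsim n$), we obtain $\expn{X-Y-Z} = \Omega_r(\nu^2 d^{q+1}/s_{0,q})$; picking a realization with $X-Y-Z$ at least this large and applying Lemma~\ref{lemma:surplus additive} to the disjoint subgraphs $(G[A_j \cup B_j])_{j=1}^k$ yields $\sp(G) \geq \frac{1}{2}(X-Y-Z)$, as required. In the complementary ``sparse'' regime, Lemma~\ref{lemma:five cycle sparse} already gives $\sp(G) \geq \Omega_r(m/\sqrt{d}) = \Omega_r(n\sqrt{d})$, and a short calculation (comparing $n\sqrt{d}$ with $\nu^2 d^{q+1}/s_{0,q}$ using the regime constraint on $d$ and $s_{0,q} \geq 1$) verifies that this exceeds $\Omega_r(\nu^2 d^{q+1}/s_{0,q})$. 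For $2 \leq q < \ell$ the same strategy applies, with the bound on $\operatorname{tr}(A^{2q+1})$ obtained by extending each good $q$-path to a good $\ell$-path to reduce to the $q = \ell$ analysis.
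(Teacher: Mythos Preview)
Your overall framework (random sampling of $v_1,\dots,v_k$, the sets $A_j,B_j$, and the quantities $X,Y,Z$) matches the paper's proof, and your estimates for $\expn{X}$ and $\expn{Z}$ are correct. The genuine gap is in the bound on $\expn{Y}$. The inequality
\[
\sum_{uv\in E(G)}|N_q^*(u)\cap N_q^*(v)|\le \operatorname{tr}(A^{2q+1})/(2s_{0,q}^2)
\]
is valid, but the claim that ``a case analysis over the cyclic distance of the identification, combined with the triangle bound $t(G)=O_r(nd)$'' yields a sufficient estimate on $\operatorname{tr}(A^r)$ is false in general. Take $r=7$ and let $G$ be the balanced blow-up of $C_5$ on $n$ vertices (which is $C_7$-free). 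Here $d=\Theta(n)$, $s_{0,3}=\Theta(n^2)$, and a direct eigenvalue computation gives $\operatorname{tr}(A^7)=\Theta(n^7)$, so your bound yields $\sum_{uv}|N_q^*(u)\cap N_q^*(v)|=O(n^3)$ and hence $\expn{Y}=\Theta(n^2)$. But $\expn{X}=\Theta(\nu^2 n^2)$ with $\nu<1$, so you cannot conclude $\expn{X-Y-Z}>0$; and the fallback via Lemma~\ref{lemma:five cycle sparse} only gives $\sp(G)=\Omega(n\sqrt{d})=\Omega(n^{3/2})$, which is smaller than the target $\nu^2 d^{4}/s_{0,3}=\Theta(\nu^2 n^2)\ge n^{1.8}$. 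The triangle bound alone cannot control $\operatorname{tr}(A^r)$ because a $C_r$-free graph can contain many copies of $C_{r-2}, C_{r-4},\dots$, each contributing heavily to closed walks of length $r$.

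The paper avoids this by \emph{not} passing through $\operatorname{tr}(A^r)$. Instead it partitions $V(G)=U_0\cup\cdots\cup U_q$ and works with partite good paths, so that when $u_0\in S(u)\cap S(v)$, the two good $q$-paths from $u_0$ must coincide at the \emph{same level} $i$, producing a $C_{2(q-i)+1}$ with a controlled structure. At this point the full family of goodness parameters $s_{i,q}$ (not just $s_{0,q}$) is used: the set $R=\{u'\in U_q:s_{i,q}\le h_{q-i}(u_i,u')<2s_{i,q}\}$ has size $O_r(d^{q-i}/s_{i,q})$, and the $C_r$-freeness is exploited through an auxiliary graph on $R$ (Lemma~\ref{lemma:smallavgdeg}) whose average degree is $O_r(1)$. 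This yields $\sum_{uv}|S(u)\cap S(v)|=O_r(nd^q/(\nu s_{0,q}))$, which in the blow-up example gives $O(n^2/\nu)$ rather than $O(n^3)$ and makes the argument go through. Your proposal uses only $s_{0,q}$ and no partite structure, so it cannot access this refinement. The reduction you suggest for $q<\ell$ (``extending each good $q$-path to a good $\ell$-path'') is also unclear, since $\operatorname{tr}(A^{2q+1})$ is a graph invariant independent of any notion of good path.
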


Before proving the two key lemmas, let us complete the proof of Lemma~\ref{lemma:general cycle dense}. We need one final simple observation.
\begin{prop}\label{prop:sis}
	For any $0\le i\le i'<j'\le j \le \ell$, we have $s_{i',j'}\le 2s_{i,j}$. In particular, $\nu = (2s_{0,\ell})^{-O(\ell^2\eps)}$.
\end{prop}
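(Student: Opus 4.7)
The plan is to derive $s_{i',j'} \le 2 s_{i,j}$ from a single good $\ell$-path, by comparing walk counts between pairs of vertices along it. The existence of such a path is guaranteed by Proposition~\ref{prop:pigeonholing} (since $\nu > 0$ and $nd^\ell$ is large), so I would fix one, say $u_0 u_1 \dots u_\ell$. The goodness condition then gives, in particular, $s_{i',j'} \le h_{j'-i'}(u_{i'},u_{j'})$ and $h_{j-i}(u_i,u_j) < 2 s_{i,j}$. It therefore suffices to establish the walk-count monotonicity
\[
h_{j'-i'}(u_{i'},u_{j'}) \;\le\; h_{j-i}(u_i,u_j).
\]

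The key step is walk concatenation. Any walk of length $j'-i'$ from $u_{i'}$ to $u_{j'}$ extends to a walk of length $j-i$ from $u_i$ to $u_j$ by prepending the sub-walk $u_i u_{i+1}\dots u_{i'}$ and appending $u_{j'} u_{j'+1}\dots u_j$. Distinct middle walks yield distinct concatenations, so
\[
h_{j-i}(u_i,u_j) \;\ge\; h_{i'-i}(u_i,u_{i'}) \cdot h_{j'-i'}(u_{i'},u_{j'}) \cdot h_{j-j'}(u_{j'},u_j) \;\ge\; h_{j'-i'}(u_{i'},u_{j'}),
\]
where the last inequality uses that $h_{i'-i}(u_i,u_{i'}) \ge 1$ (the sub-path itself is a walk) and $h_{j-j'}(u_{j'},u_j) \ge 1$, with the convention $h_0(v,v)=1$ taking care of the boundary cases $i=i'$ or $j=j'$. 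Combining with the two inequalities from the goodness condition yields $s_{i',j'} < 2 s_{i,j}$, which is the claimed bound.

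For the ``in particular'' part, I would apply the just-proved inequality with $(i,j) = (0,\ell)$ to every one of the $\binom{\ell+1}{2}$ factors in the product $\prod_{0\le i<j\le \ell} s_{i,j}$ defining $\nu$. This gives $s_{i',j'} \le 2 s_{0,\ell}$ termwise, hence $\prod_{i<j} s_{i,j} \le (2 s_{0,\ell})^{\binom{\ell+1}{2}}$, and therefore
\[
\nu \;=\; \Big(\prod_{i<j} s_{i,j}\Big)^{-\eps} \;\ge\; (2 s_{0,\ell})^{-\binom{\ell+1}{2}\eps} \;=\; (2 s_{0,\ell})^{-O(\ell^2\eps)}.
\]

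I do not anticipate any real obstacle: the whole proposition is a short structural observation about walks along one fixed good path, and the concatenation bound is the only substantive ingredient. The mildly delicate point is the boundary behaviour when $i=i'$ or $j=j'$, which is absorbed by the convention $h_0(v,v)=1$.
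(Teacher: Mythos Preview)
Your proposal is correct and follows essentially the same approach as the paper: fix a good $\ell$-path, use the goodness bounds $s_{i',j'}\le h_{j'-i'}(u_{i'},u_{j'})$ and $h_{j-i}(u_i,u_j)<2s_{i,j}$, and bridge them via the walk-concatenation inequality $h_{j'-i'}(u_{i'},u_{j'})\le h_{j-i}(u_i,u_j)$. The ``in particular'' part is likewise identical, bounding each factor by $2s_{0,\ell}$ and using that there are $\binom{\ell+1}{2}=O(\ell^2)$ factors.
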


\begin{proof}
	Consider any good $\ell$-path $u_0u_1\dots u_\ell$. Then we know $h_{j-i}(u_i,u_j)\le 2s_{i,j}$. On the other hand, we have $h_{j-i}(u_i,u_j) \ge h_{j'-i'}(u_{i'},u_{j'}) \ge s_{i',j'}$ since we can replace $u_{i'}\dots u_{j'}$ with any walk of length $j'-i'$ from $u_{i'}$ to $u_{j'}$. Hence, $s_{i',j'}\le 2s_{i,j}$, proving the first claim.
	The second claim easily follows.
\end{proof}

\lateproof{Lemma~\ref{lemma:general cycle dense}}
Write $s=s_{0,\ell}$. We have
$$\max\Set{d^{\ell+1}/s, nd^{1/2} s^{1/2(\ell-1)} } \ge \left(\frac{d^{\ell+1}}{s}\cdot \left(nd^{1/2} s^{1/2(\ell-1)}\right)^{2\ell+1}\right)^{\frac{1}{2\ell+2}}= n^{\alpha}d^{\beta}s^{\frac{3}{(r-3)(r+1)}}. $$
Thus, by Corollary~\ref{cor:SDPsurplus} and Lemma~\ref{lemma:nhoodsurplus}, we have $\sp(G)=\Omega_r(\nu^2 n^{\alpha}d^{\beta}s^{\frac{3}{(r-3)(r+1)}})$.
Finally, by Proposition~\ref{prop:sis}, we can ensure that $\nu^2 s^{\frac{3}{(r-3)(r+1)}}=\Omega(1)$ when $\eps$ is sufficiently small, which completes the proof.
\endproof

\begin{remark}
Observe that the same proof, using $\max\Set{a,b}\ge (ab^{2\ell+2})^{1/(2\ell+3)}$ instead of $\max\Set{a,b}\ge (ab^{2\ell+1})^{1/(2\ell+2)}$, gives us $\sp(G)=\Omega_r\left((nd)^{(2\ell+2)/(2\ell+3)}\right)$, which proves Theorem~\ref{thm:odd cycles} directly when $G$ is (almost) regular. We used the slightly biased weighting here in order to be able to apply our regularization lemma.
\end{remark}

It remains to prove the key Lemmas~\ref{lemma:SDPsurplus} and~\ref{lemma:nhoodsurplus}. Since some steps in the proofs are similar, we treat them in a unified setup. 
For the rest of this section, fix $2\le q\le \ell$. To enhance readability, we also set $s=s_{0,q} \text{ and } s'=s_{0,q-1}$.

\paragraph{Partition of $V(G)$.} We will be interested in good paths of length~$q$. In order to maintain better control on how different such paths can intersect, we consider ``partite'' paths where the vertices are chosen from disjoint vertex subsets.
To this end, partition $V(G)$ randomly as $U_0\cup U_1\cup \dots \cup U_q$ by placing, each vertex $u$ independently, in $U_i$ with probability $1/(q+1)$, for every $0\leq i\leq q$. Write $\cA$ for the set of tuples $(u_0,u_1,\dots,u_q)\in U_0\times U_1\times \dots \times U_q$ such that $u_0u_1\dots u_q$ is a good path.
Write $\cB$ for the set of tuples $(u_0,u_1,\dots,u_{q-1})\in U_0\times U_1\times \dots \times U_{q-1}$ such that $u_0u_1\dots u_{q-1}$ is a good path.

Since there are $\Omega_r(\nu nd^q)$ good paths of length $q$ in $G$, each of which contributes to $\cA$ with probability $(q+1)^{-(q+1)}=\Omega_r(1)$, we have $\expn{|\cA|}= \Omega_r(\nu nd^q)$.
Hence, there exists a partition $U_0\cup U_1\cup \dots \cup U_q$, which we will fix from now on, for which 
$|\cA|=\Omega_r(\nu nd^q)$. (Note that, since any tuple in $\cB$ can be extended to at most $\Delta(G)$ tuples in $\cA$, we also know that $|\cB|=\Omega_r(\nu nd^{q-1})$, although we will not explicitly need this.)

\paragraph{The sets $S(u),T(u)$.} 
For every vertex $u\in V(G)$, define sets $T(u)$ and $S(u)$ as follows. If $u\in U_0\cup U_1\cup \dots \cup U_{q-2}$, then let $T(u)=S(u)=\emptyset$. If $u\in U_{q-1}$, then let $S(u)=\emptyset$ and let $T(u)$ be the set of those $u_0 \in U_0$ for which there exist at least $\frac{|\cA|s'}{4n\Delta(G)^q}=\Omega_r(\nu s')$ tuples of the form $(u_0,u_1,u_2,\dots,u_{q-2},u) \in \cB$. Finally, if $u\in U_q$, then let $T(u)=\emptyset$ and let $S(u)$ be the set of those $u_0\in U_0$ for which there exist at least $\frac{|\cA|s}{4n\Delta(G)^q}=\Omega_r(\nu s)$ tuples of the form $(u_0,u_1,\dots,u_{q-1},u) \in \mathcal{A}$.

\bigskip
Very roughly, we can think of $S(u)$ as the $q$th neighbourhood of $u$, but we only use this for $u\in U_q$ and only consider ``$q$th neighbours'' in $U_0$ which can be reached by a substantial amount of good paths.

The following lemma is the crucial point where we use the assumption that $G$ is $C_r$-free. We will use it in the proofs of both key lemmas in order to bound terms that would otherwise drive the surplus to be small.

\begin{lemma}\label{lem:intersection sums}
We have $$\sum_{uv\in E(G)} |S(u)\cap S(v)| = O_r\left(\frac{nd^q}{\nu s}\right)\text{ and } 
\sum_{uv\in E(G)} |T(u)\cap T(v)| = O_r\left(\frac{nd^{q-1}}{\nu s'}\right).$$
\end{lemma}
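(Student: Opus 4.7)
The plan is to prove both inequalities by a counting argument. Consider the first one. For every triple $(u_0,u,v)$ with $uv\in E(G)$, $u,v\in U_q$ and $u_0\in S(u)\cap S(v)$, the definition of $S$ produces at least $\Omega_r(\nu s)$ good paths in $\cA$ from $u_0$ to $u$ and $\Omega_r(\nu s)$ good paths from $u_0$ to $v$; concatenating any such pair through the edge $uv$ yields a closed walk
\[ u_0 u_1 \dots u_{q-1}\, u\, v\, v_{q-1}\dots v_1 u_0 \]
of length $2q+1$ whose two halves are good $q$-paths. Hence the number $W_S$ of such ``configurations'' satisfies $W_S\ge \Omega_r((\nu s)^2)\sum_{uv\in E(G)}|S(u)\cap S(v)|$, so it suffices to prove $W_S=O_r(\nu s\cdot nd^q)$.

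For the upper bound on $W_S$ we use the partite structure together with the $C_r$-freeness of $G$. Because $u_i\in U_i$ and $v_i\in U_i$ and the $U_i$ are disjoint, no coincidence of vertices between the two halves is possible except of the form $u_a=v_a$ for some $1\le a\le q-1$. When $q=\ell$ the closed walk has length exactly $r$, so $C_r$-freeness forces such a coincidence; for $q<\ell$ we first \emph{extend} each configuration to length $r$ by appending, to each half, an additional good walk of length $\ell-q$ starting at the endpoint (so that the two extended paths have a common final vertex, producing a closed walk of length $2\ell+1=r$), and the $C_r$-free argument is then applied to the extended walk. Counting the number of extensions per configuration, and keeping track that they are $\Theta_r(\nu d^{\ell-q})$ using the average count of good $(\ell-q)$-paths, lets us reduce the general $q$ case to $q=\ell$ at the cost of an explicit factor we can absorb.

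So we may assume the coincidence $u_a=v_a=w$ occurs, and split $W_S\le\sum_{a=1}^{q-1}W_{S,a}$. For fixed $a$, we estimate $W_{S,a}$ by summing over $(u_0,w,u,v)$ with $uv\in E(G)$ the product (good paths $u_0\to w$ of length $a$)${}^2\cdot$(good paths $w\to u$ of length $q-a$)$\cdot$(good paths $w\to v$ of length $q-a$). The good-path codegree bounds $h_a(u_0,w)<2s_{0,a}$ and $h_{q-a}(w,\cdot)<2s_{a,q}$, together with the basic inequality
\[ s_{0,a}\cdot s_{a,q}\le h_q(u_0,u_q)<2s_{0,q}=2s, \]
which follows from composing walks through the intermediate vertex, control the combinatorics. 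Summing $h_a(u_0,w)^2$ in $u_0$ gives $h_{2a}(w,w)$, and summing $h_{q-a}(w,u)h_{q-a}(w,v)$ over edges $uv$ gives $\tfrac12 h_{2(q-a)+1}(w,w)$; the resulting sum $\sum_w h_{2a}(w,w)h_{2(q-a)+1}(w,w)$ is bounded by exploiting $C_r$-freeness, most importantly via $t(G)=O_r(nd)$ (a consequence of the fact that neighbourhoods in a $C_r$-free graph are $P_{r-2}$-free). The second inequality is handled identically, with $q$ replaced by $q-1$ and $s$ by $s'$; the walks in this case have length $2q-1$ and the extension step up to length $r$ is longer but structurally the same.

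The main obstacle is that the naive, $C_r$-free-free version of the walk count is too weak by a factor of $d$, so the argument really must use the forced intermediate coincidence $u_a=v_a$ (equivalently, the bound $t(G)=O_r(nd)$) to save this factor; handling the case $q<\ell$, where the relevant closed walks are shorter than $r$, requires the extension trick above and careful bookkeeping of the number of good extensions to avoid losing the gain from $\nu$.
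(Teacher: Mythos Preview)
Your outline has two genuine gaps.

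\textbf{The extension step for $q<\ell$ is not well-defined.} You write that you will append to \emph{each} half a good walk of length $\ell-q$ ``so that the two extended paths have a common final vertex''. But appending independent extensions to the two endpoints $u,v$ does not make them meet; forcing a common terminal vertex is itself a codegree-type constraint of exactly the sort you are trying to bound. The paper handles $q<\ell$ by a completely different mechanism: it never extends the walk. Instead it fixes the coincidence vertex $z=u_i=v_i$ and looks at the auxiliary graph $H$ on a set $R\subset U_q$ where $u'v'\in E(H)$ whenever there is a $C_{2j+1}$ (with $j=q-i\le\ell$) through $z,u',v'$. A path of length $r-2j$ in $H$ would concatenate into a $C_r$ in $G$; hence $H$ is $P_{r-2j}$-free and so has bounded average degree. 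This is what replaces your extension trick, and it works uniformly for all $q\le\ell$.

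\textbf{The closed-walk count is too weak even for $q=\ell$.} After double-expanding both $S$-conditions you need $W_S=O_r(\nu s\cdot nd^{q})$, but your bound
\[
W_{S,a}\ \le\ \sum_{w} h_{2a}(w,w)\,h_{2(q-a)+1}(w,w)
\]
discards goodness entirely and only yields $O_r(nd^{2q+1})$. The single input $t(G)=O_r(nd)$ saves at most one factor of $d$; you are still off by a factor of roughly $d^{q}/(\nu s)$, which is not $O_r(1)$. The codegree identity $s_{0,a}s_{a,q}\le 2s$ is correct, but you never show how it enters the estimate above.

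The paper avoids both problems by expanding only \emph{one} side: from $u_0\in S(u)$ it extracts $\Omega_r(\nu s)$ tuples $(u_0,\dots,u_{q-1},u,v)\in\cA\times\{v\}$ with $v\in N(u)\cap U_q$ and $u_0\in S(v)$, and then shows there are only $O_r(nd^{q})$ such tuples. For that count it uses the existence (not the number) of a second good path $u_0\to v$ to locate the largest coincidence index $i$, sets $R=\{u'\in U_q:\ s_{i,q}\le h_{q-i}(u_i,u')<2s_{i,q}\}$, and applies the auxiliary-graph lemma above to get $O_r(d^{q-i})$ choices for $(u_{i+1},\dots,u_q,v)$; the remaining $u_0,\dots,u_{i-1}$ contribute $O_r(d^{i})$. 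Expanding both sides, as you do, forces you to control the full pair count and loses the leverage of the $\nu s$ lower bound on one side.
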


The intuition is that if $u_0\in S(u)\cap S(v)$, then we must have $u,v\in U_q$, and two of the good paths in $\cA$, one from $u_0$ to $u$ and one from $u_0$ to $v$, will give us together with the edge $uv$ an odd cycle. In the extreme case that $q=\ell$ and the two paths are internally vertex-disjoint, this would be a cycle of length $2\ell+1=r$, a contradiction to our assumption that $G$ is $C_r$-free. 
In order to prove the more general case when $q$ can be smaller and the paths might not be internally disjoint, we need a counting lemma which we will establish in the next subsection.

\subsection{Counting good paths and short cycles}

As discussed above, when $u_0\in S(u)\cap S(v)$, then we find an odd cycle of the form $u_i\dots u_{q-1}uvv_{q-1}\dots v_i$, where $u_i=v_i$ and $u_j,v_j\in U_j$ for all $i\le j\le q-1$. The following lemma shows that we can utilize the $C_r$-freeness of $G$ to control such short odd cycles. In its statement, one can think of $z=u_i=v_i$ and $R\In U_q$.

\begin{lemma} \label{lemma:smallavgdeg}
	Let $z\in V(G)$ and let $R \subset V(G)\setminus \{z\}$. Let $j \leq \ell$ be a positive integer. Define the auxiliary graph $H$ with vertex set $R$ such that $uv\in E(H)$ if there is a $C_{2j+1}$ in $G$ of the form $zu_1u_2\dots u_{j-1} u v v_{j-1}v_{j-2}\dots v_1z$ with $u_1,\dots,u_{j-1},v_1,\dots,v_{j-1}\not \in R$. Then $H$ has average degree at most $2^{2j+1}(r-2j)=O_r(1)$.
\end{lemma}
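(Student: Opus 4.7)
The plan is to partition $E(H)$ into at most $2^{2j+1}$ classes, show each induces a $P_{r-2j+1}$-free subgraph, and apply the Erd\H{o}s--Gallai theorem. For every $uv\in E(H)$, I would fix once and for all a witnessing cycle $C^{uv} = zu_1^{uv}\dots u_{j-1}^{uv} u v v_{j-1}^{uv}\dots v_1^{uv} z$ as in the statement, and write $P_u^{uv}$ and $P_v^{uv}$ for its two $z$-to-endpoint paths of length $j$; their internal vertices lie in $V(G)\setminus R$.

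The central observation is a cycle-closing step: given a path $w_0 w_1\dots w_t$ in $H$ of length $t = r - 2j$, the concatenation of the reverse of $P_{w_0}^{w_0 w_1}$, the path $w_0 w_1 \dots w_t$, and $P_{w_t}^{w_{t-1} w_t}$ is a closed walk in $G$ of length $j + (r-2j) + j = r$. Because the $w_i$'s lie in $R$ while the internal vertices of the two side-paths lie in $V(G)\setminus R$, the walk is a genuine simple $C_r$ unless the two side-paths share an internal vertex other than $z$. Since $G$ is $C_r$-free, such a shared internal vertex must exist whenever a path of length $r-2j$ appears in $H$.

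To preclude this obstruction uniformly, I would partition $E(H)$ according to a signature $\sigma(uv)\in\{0,1\}^{2j+1}$ encoding combinatorial data of $C^{uv}$---for instance the colour sequence of its $2j-1$ non-$R$ internal vertices under a fixed $2$-colouring of $V(G)\setminus R$, supplemented with a bit or two of orientation data distinguishing the roles of $u$ and $v$. Within a class $H_\sigma$, matching signatures along a putative length-$(r-2j)$ path $w_0\dots w_t$ constrain the possible location of any shared internal vertex between the side-paths of the two end-cycles; a pigeonhole argument (or averaging over a random $2$-colouring) reroutes the closed walk to a genuine $C_r$, contradicting $C_r$-freeness. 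Hence each $H_\sigma$ is $P_{r-2j+1}$-free. By the Erd\H{o}s--Gallai theorem, $e(H_\sigma) \le \tfrac{1}{2}(r-2j-1)|R|$, and summing over the $2^{2j+1}$ classes bounds the average degree of $H$ by $2^{2j+1}(r-2j)$, as claimed.

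The main obstacle is the concrete design of the signature so that the shared-internal-vertex case collapses inside every class simultaneously. This is the combinatorial heart of the proof, and it is exactly where the factor $2^{2j+1}$ enters; constructing the right signature (deterministic, or via averaging over a random colouring) requires care, because the two end-cycles of a long path in $H_\sigma$ can overlap in several different ways, and each overlap pattern has to be turned into a $C_r$ via a suitable re-routing through $z$.
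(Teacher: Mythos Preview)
Your high-level strategy---random $2$-colouring, forbidden long paths, Erd\H{o}s--Gallai---is exactly the paper's. But you explicitly leave the heart of the argument open (``the concrete design of the signature so that the shared-internal-vertex case collapses''), so as written this is not a proof; it is a plan with the key step missing.

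The paper's implementation is simpler than the signature scheme you sketch, and avoids fixing a single witness per edge. Randomly $2$-colour $V(G)\setminus\{z\}=A\cup B$, and let $H'\subset H$ consist of those edges $ab$ for which \emph{some} witnessing $C_{2j+1}$ has $a,u_1,\dots,u_{j-1}\in A$ and $b,v_1,\dots,v_{j-1}\in B$ (with the $u$'s and $v$'s in $A\setminus R$ and $B\setminus R$ respectively). Each edge of $H$ survives with probability at least $2^{-2j}$, so some colouring gives $e(H')\ge 2^{-2j}e(H)$. The point you were missing is that colouring the \emph{endpoints} $a,b$ as well---not just the internal witness vertices---forces $H'$ to be bipartite with parts $R\cap A$ and $R\cap B$. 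Since $r$ is odd, $r-2j$ is odd, so any path of length $r-2j$ in $H'$ has its endpoints in different parts, say $a_1\in A$ and $b_i\in B$. The witness for the first edge gives a $G$-path from $z$ to $a_1$ through $A\setminus R$; the witness for the last edge gives a $G$-path from $b_i$ to $z$ through $B\setminus R$. These side-paths are automatically internally disjoint (one sits in $A\setminus R$, the other in $B\setminus R$), and both are disjoint from the $R$-vertices of the $H'$-path. Concatenation yields a genuine $C_r$, a contradiction. No signature bookkeeping, no rerouting through overlap patterns.
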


\begin{proof}
	Define a random partition $V(G)\setminus \{z\}=A\cup B$ by placing each vertex on either side with probability $1/2$, independently of the other vertices. Let $H'$ be the subgraph of $H$ on the same vertex set $R$ in which $ab$ is an edge if $a\in A$, $b\in B$ and there is a $C_{2j+1}$ in $G$ of the form $zu_1u_2\dots u_{j-1}abv_{j-1}v_{j-2}\dots v_1z$ such that $u_1,\dots,u_{j-1}\in A\setminus R$ and $v_1,\dots,v_{j-1}\in B\setminus R$. Clearly, each edge in $H$ is an edge in $H'$ with probability at least $2^{-2j}$. Hence, $\mathbb{E}\lbrack e(H')\rbrack\geq 2^{-2j}e(H)$, and there exists a partition $A,B$ for which $e(H')\ge 2^{-2j}e(H)$.
	
	 Crucially, $H'$ cannot contain a path of length $r-2j$. Indeed, suppose there is such a path $a_1b_1a_2b_2\dots a_ib_i$ where $r-2j=2i-1$. Without loss of generality, $a_1\in A$ and $b_i\in B$. Since $a_1b_1\in E(H')$, there exists a path $zu_1u_2\dots u_{j-1}a_1$ in $G$ with $u_1,u_2,\dots,u_{j-1}\in A\setminus R$. Similarly, there exists a path $zv_1v_2\dots v_{j-1}b_i$ in $G$ with $v_1,v_2,\dots,v_{j-1}\in B\setminus R$. Then $$zu_1 u_2\dots u_{j-1}a_1b_1a_2b_2\dots a_ib_iv_{j-1}\dots v_1z$$ is a $C_r$ in $G$, which is a contradiction. Thus, $H'$ does not contain a path of length $r-2j$. It follows that $H'$ has average degree at most $2(r-2j)$. Hence, $H$ has average degree at most $2^{2j+1}(r-2j)=O_r(1)$.
\end{proof}

\begin{lemma} \label{lemma:shortpathcount}
	Let $z\in V(G)$ and let $R \subset V(G)\setminus \{z\}$. Let $j \leq \ell$ be a positive integer. Suppose there exists a real number $s''\ge 1$ such that for any $u\in R$, we have $s''\leq h_j(z,u)< 2s''$.
Then there are at most $O_r(d^j)$ paths $zu_1\dots u_{j-1}uv$ in $G$ with $u_1,\dots,u_{j-1}\not \in R$, $u,v\in R$ which can be extended to a $C_{2j+1}$ of the form $zu_1\dots u_{j-1}uv v_{j-1}\dots v_1z$ with $v_1,\dots,v_{j-1}\not \in R$.
\end{lemma}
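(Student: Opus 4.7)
The plan is to combine the edge bound on the auxiliary graph $H$ from Lemma~\ref{lemma:smallavgdeg} with the two-sided control on $h_j(z,u)$ for $u\in R$. I would organise the counted paths $P=zu_1\dots u_{j-1}uv$ according to their final edge $uv$. The very hypothesis that each such $P$ extends to a $C_{2j+1}$ of the prescribed shape is exactly what puts $uv$ into $E(H)$; so the map $P\mapsto \{u,v\}$ is well-defined from counted paths into $E(H)$, and the whole task reduces to bounding the fibre sizes and $|E(H)|$.

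For a fixed ordered pair $(u,v)$ with $uv\in E(H)$, the number of paths of the form $zu_1\dots u_{j-1}uv$ is at most the number of walks of length $j$ from $z$ to $u$, namely $h_j(z,u)$, which is strictly less than $2s''$ by the hypothesis on $R$. Since an unordered edge $\{u,v\}\in E(H)$ can serve as the final edge in two orderings, and since by Lemma~\ref{lemma:smallavgdeg} the graph $H$ has average degree $O_r(1)$, the total count $N$ satisfies
$$N \;\le\; \sum_{uv\in E(H)}\bigl(h_j(z,u)+h_j(z,v)\bigr) \;<\; 4s''\,|E(H)| \;=\; O_r\!\bigl(s''\,|R|\bigr).$$

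To finish, I would convert $s''\,|R|$ into the desired $O_r(d^j)$ bound using the lower half of the codegree hypothesis. Since $h_j(z,u)\ge s''$ for every $u\in R$, we have
$$s''\,|R| \;\le\; \sum_{u\in R}h_j(z,u) \;\le\; \sum_{u\in V(G)}h_j(z,u) \;\le\; \Delta(G)^j \;=\; O_r(d^j),$$
using the assumption $\Delta(G)=O_r(d)$ available in the almost-regular regime fixed at the start of Section~\ref{sec:odd cycles}. Combining the two displays yields $N=O_r(d^j)$, as required. The only mild subtleties are the twofold overcount incurred by treating each edge of $H$ as two ordered pairs, and the loosening obtained by bounding path-counts by walk-counts; neither is a genuine obstacle, as both are absorbed into the implicit constant. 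All the real content sits in Lemma~\ref{lemma:smallavgdeg} (which is where $C_r$-freeness is used) and in the two-sided hypothesis on $h_j(z,\cdot)$ on $R$.
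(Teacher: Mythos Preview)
Your proof is correct and follows essentially the same approach as the paper's: both arguments map each counted path to its final edge $uv\in E(H)$, use the upper bound $h_j(z,u)<2s''$ to control the fibre sizes, invoke Lemma~\ref{lemma:smallavgdeg} to get $e(H)=O_r(|R|)$, and use the lower bound $h_j(z,u)\ge s''$ together with $\sum_u h_j(z,u)\le \Delta(G)^j$ to deduce $s''|R|=O_r(d^j)$. The only difference is the order in which these three ingredients are assembled.
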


\begin{proof}
	Since every $u\in R$ has $h_j(z,u)\geq s''$ and the total number of walks of length $j$ starting from $z$ is at most $\Delta(G)^j=O_r(d^j)$, we have $|R|=O_r(d^j/s'')$. Define the auxiliary graph $H$ like in Lemma~\ref{lemma:smallavgdeg}. Then we have $e(H)= O_r(d^j/s'')$. Since $h_j(z,u)< 2s''$ for every $u\in R$, there are at most $2e(H)\cdot 2s''=O_r(d^j)$ paths $zu_1\dots u_{j-1}uv$ in $G$ with $u_1,\dots,u_{j-1}\not \in R$ and $u,v\in R$ which can be extended to a $C_{2j+1}$ of the form $zu_1\dots u_{j-1}uv v_{j-1}\dots v_1z$ with $v_1,\dots,v_{j-1}\not \in R$.
\end{proof}

\begin{lemma} \label{lemma:countcircuits}
	The number of tuples $(u_0,\dots,u_q,v)$ with $(u_0,\dots,u_q)\in \cA$, $v\in N(u_q)\cap U_q$ and $u_0\in S(v)$ is at most $O_r(nd^q)$. Similarly, the number of tuples $(u_0,\dots,u_{q-1},v)$ with $(u_0,\dots,u_{q-1})\in \cB$, $v\in N(u_{q-1})\cap U_{q-1}$ and $u_0\in T(v)$ is at most $O_r(nd^{q-1})$. 
\end{lemma}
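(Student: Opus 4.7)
I plan to prove the first assertion (the second being analogous), by fixing $u_0 \in U_0$ and bounding the number of completions $(u_1, \ldots, u_q, v)$ by $O_r(d^q)$, then summing over $u_0$ to get $O_r(nd^q)$. Set $R = \{x \in U_q : s \leq h_q(u_0, x) < 2s\}$. For any admissible tuple: the goodness of $(u_0, \ldots, u_q) \in \cA$ forces $h_q(u_0, u_q) \in [s, 2s)$, so $u_q \in R$; the hypothesis $u_0 \in S(v)$ yields a good path from $u_0$ to $v$, forcing $h_q(u_0, v) \in [s, 2s)$, so $v \in R$; and the partition makes $u_1, \ldots, u_{q-1} \notin R$ automatic, since these lie in $U_1, \ldots, U_{q-1}$ while $R \subseteq U_q$. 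Each tuple thus corresponds to a path $u_0 u_1 \ldots u_q v$ of length $q+1$ in $G$ with endpoints in $R$ and internal vertices outside $R$.

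The central tool is Lemma~\ref{lemma:shortpathcount} applied with $z = u_0$, $j = q$, $s'' = s$: it bounds by $O_r(d^q)$ the number of such paths that extend to a simple $C_{2q+1}$ whose internal vertices lie outside $R$. The extension would come from a good path $u_0 w_1 \ldots w_{q-1} v$, of which $u_0 \in S(v)$ guarantees at least $\Omega_r(\nu s)$; the partition gives $w_i \in U_i \subseteq V \setminus R$ for free, and the only obstruction to a simple cycle is a collision $w_i = u_i$ at some common index $i \in \{1, \ldots, q-1\}$. By walk composition, the number of good paths colliding in this way is at most $\sum_{i=1}^{q-1} h_i(u_0, u_i) h_{q-i}(u_i, v) \leq (q-1) h_q(u_0, v) < 2(q-1)s$, so whenever $\Omega_r(\nu s) > 2(q-1)s$ a valid extension is guaranteed and Lemma~\ref{lemma:shortpathcount} directly yields $O_r(d^q)$ per $u_0$.

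The main obstacle I anticipate is that $\nu$ may be as small as $d^{-1/10}$, in which case the collision count $2(q-1)s$ can dominate $\Omega_r(\nu s)$ and some 'exceptional' tuples admit no collision-free extension. For these I would pigeonhole over the colliding paths to locate an index $i^* \in \{1, \ldots, q-1\}$ and a vertex $a = u_{i^*}$ satisfying $h_{i^*}(u_0, a) h_{q-i^*}(a, v) = \Omega_r(\nu s / q)$; the identity $\sum_{a, v} h_{i^*}(u_0, a) h_{q-i^*}(a, v) = \sum_v h_q(u_0, v) \leq \Delta(G)^q = O_r(d^q)$ bounds the number of such 'heavy' triples per $(u_0, i^*)$ by $O_r(d^q/(\nu s))$, after which I would double-count against compatible completions using the goodness constraint $h_{i^*}(u_0, a) \in [s_{0, i^*}, 2 s_{0, i^*})$ and the restriction $v \in R(u_0)$. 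This bookkeeping is expected to be the technically hardest step, and the resulting $O_r(d^q)$ contribution must absorb cleanly into the generic Lemma~\ref{lemma:shortpathcount} bound. Summing over $u_0 \in U_0$ then yields $O_r(nd^q)$, and the second assertion follows by the identical argument with $q-1$, $s' = s_{0, q-1}$, $T(v)$, and $U_{q-1}$ in place of $q$, $s$, $S(v)$, and $U_q$.
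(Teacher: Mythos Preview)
Your setup is correct and you have identified the right tool, but the proof breaks down in the ``exceptional'' case, which is in fact the only case that matters. Since $\nu\le 1$ (it is a product of terms $s_{i,j}^{-\eps}\le 1$), the inequality $\Omega_r(\nu s)>2(q-1)s$ never holds for $q\ge 2$, so every tuple is potentially exceptional and your entire argument rests on the pigeonhole/heavy-triple sketch. That sketch, as written, loses a factor of roughly $d/\nu$: after locating a heavy pair $(a,v)$ you still have to choose $u_q\in N(v)$ (costing $O_r(d)$) and then $(u_1,\dots,u_{i^*-1})$ and $(u_{i^*+1},\dots,u_{q-1})$, which by goodness cost $O_r(s_{0,i^*})$ and $O_r(s_{i^*,q})$ respectively. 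Using $s_{0,i^*}s_{i^*,q}\le 2s$ and your bound $O_r(d^q/(\nu s))$ on the number of heavy pairs, the total comes to $O_r(d^{q+1}/\nu)$ per $u_0$, not $O_r(d^q)$. I do not see how the additional constraints you mention ($h_{i^*}(u_0,a)\in[s_{0,i^*},2s_{0,i^*})$ and $v\in R(u_0)$) recover this loss.

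The paper avoids the collision problem altogether by \emph{not} centring Lemma~\ref{lemma:shortpathcount} at $u_0$. Instead, it picks an arbitrary good path $(u_0,v_1,\dots,v_{q-1},v)\in\cA$ witnessing $u_0\in S(v)$, takes $i$ to be the \emph{largest} index with $u_i=v_i$, and applies Lemma~\ref{lemma:shortpathcount} at $z=u_i$ with $j=q-i$ and the set $R=\{u'\in U_q:\,s_{i,q}\le h_{q-i}(u_i,u')<2s_{i,q}\}$. The point is that maximality of $i$ forces the two subpaths $u_i u_{i+1}\cdots u_q$ and $u_i v_{i+1}\cdots v_{q-1} v$ to be internally disjoint, so the required simple $C_{2(q-i)+1}$ exists automatically; moreover both $u_q$ and $v$ lie in $R$ because both paths are good (so $h_{q-i}(u_i,u_q),\,h_{q-i}(u_i,v)\in[s_{i,q},2s_{i,q})$). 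One then counts: $n$ choices for $u_i$, $O_r(d^{q-i})$ for $(u_{i+1},\dots,u_q,v)$ via Lemma~\ref{lemma:shortpathcount}, and $O_r(d^i)$ for $(u_0,\dots,u_{i-1})$, giving $O_r(nd^q)$ after summing over $i$. The crucial idea you are missing is to let the centre of the short cycle vary with the tuple and to use the parameters $s_{i,q}$ rather than $s_{0,q}$; choosing $i^*$ by pigeonhole instead of by maximality destroys exactly this disjointness.
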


\begin{proof}
	We prove the first claim. The proof of the second is almost verbatim the same.
	Take a tuple $(u_0,\dots,u_q,v)$ with $(u_0,\dots,u_q)\in \cA$ and $v\in N(u_q)\cap U_q$ for which $u_0\in S(v)$. The last condition implies that there exists $(v_0,v_1,\dots,v_{q-1},v)\in \cA$ with $v_0=u_0$.
	Let $i$ be the largest index for which $u_i=v_i$. Then $u_i\dots u_{q-1}u_qvv_{q-1}\dots v_i$ is a $C_{2j+1}$ in $G$, where $1\le j=q-i\le q\le \ell$. We now count the number of choices for $(u_i,u_{i+1},\dots,u_{q-1},u_q,v)$.
	
	There are at most $n$ possibilities for~$u_i$. Fix such a choice $u_i$ (which also determines~$i$). Set $R=\set{u'\in U_q}{s_{i,q}\le h_{q-i}(u_i,u')<2s_{i,q}}$.	
	By Lemma~\ref{lemma:shortpathcount} (with $z=u_i=v_i$ and $s''=s_{i,q}$), there are at most $O_r(d^j)$ choices for $(u_{i+1},\dots,u_{q-1},u_q,v)$. Given any such choice, there are clearly at most $\Delta(G)^i=O_r(d^{q-j})$ possibilities to choose $u_{i-1},\dots,u_0$. In total, this gives $O_r(nd^q)$ possibilities. 
\end{proof}

\lateproof{Lemma~\ref{lem:intersection sums}}
We prove the first bound, the proof of the second is verbatim the same.
Note that $\sum_{uv\in E(G)} |S(u)\cap S(v)|$ is half the number of triples $(u_0,u,v)$ with $u,v\in U_q$, $uv\in E(G)$ and $u_0\in S(u)\cap S(v)$. Each such triple, spelling out $u_0\in S(u)$, gives us $\Omega_r(\nu s)$ tuples $(u_0,\dots,u_{q-1},u,v)$ with $(u_0,\dots,u_{q-1},u)\in \cA$, $v\in N(u)\cap U_q$ and $u_0\in S(v)$.

Thus, it suffices to show that the total number of tuples $(u_0,\dots,u_{q-1},u,v)$ with $(u_0,\dots,u_{q-1},u)\in \cA$, $v\in N(u)\cap U_q$ and $u_0\in S(v)$ is at most $O_r(nd^q)$. This follows from Lemma~\ref{lemma:countcircuits}.
\endproof

One last tool is the following, which will help us in both key lemmas to give good bounds on terms that influence the surplus in our favour.

\begin{prop}\label{prop:good paths}
	At least half of the tuples $(u_0,\dots,u_{q-1},u_q)\in\cA$ satisfy $u_0\in T(u_{q-1})\cap S(u_q)$.
\end{prop}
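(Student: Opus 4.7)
The plan is to bound the number of \emph{bad} tuples in $\cA$---those $(u_0,\dots,u_q)$ for which $u_0\notin T(u_{q-1})$ or $u_0\notin S(u_q)$---by $|\cA|/2$. By the union bound over the two failure modes, it suffices to show that the count of bad tuples of each type is at most $|\cA|/4$.

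For a pair $(u_0,u_q)\in U_0\times U_q$, let $N(u_0,u_q)$ denote the number of tuples in $\cA$ with first coordinate $u_0$ and last coordinate $u_q$. The key observation is that whenever $N(u_0,u_q)>0$, some good $q$-path connects $u_0$ to $u_q$, and hence $h_q(u_0,u_q)\geq s_{0,q}=s$ by~\eqref{good}. Since the total number of walks of length $q$ in $G$ is at most $n\Delta(G)^q$, the number of \emph{active} pairs (those with $N(u_0,u_q)>0$) is at most $n\Delta(G)^q/s$. For bad tuples with $u_0\notin S(u_q)$, the definition of $S$ gives $N(u_0,u_q)<\frac{|\cA|s}{4n\Delta(G)^q}$ on every active bad pair, and multiplying by the bound on active pairs yields at most $|\cA|/4$ bad tuples of this first type.

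For the second failure mode $u_0\notin T(u_{q-1})$, I would project each bad $\cA$-tuple to its $\cB$-prefix $(u_0,\dots,u_{q-1})$, bound the resulting number of bad $\cB$-tuples, and then re-extend. Letting $N'(u_0,u_{q-1})$ be the analogous count of $\cB$-tuples with the given endpoints, the condition $N'(u_0,u_{q-1})>0$ forces $h_{q-1}(u_0,u_{q-1})\geq s'$, so there are at most $n\Delta(G)^{q-1}/s'$ active pairs in $U_0\times U_{q-1}$. By the definition of $T$, every bad such pair contributes $N'<\frac{|\cA|s'}{4n\Delta(G)^q}$, yielding at most $|\cA|/(4\Delta(G))$ bad $\cB$-tuples. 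Since any $\cB$-tuple extends to at most $\Delta(G)$ tuples in $\cA$ (by choosing $u_q\in N(u_{q-1})\cap U_q$), this gives at most $|\cA|/4$ bad $\cA$-tuples of this second type.

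Adding the two contributions proves the proposition. I do not foresee any major obstacle: the thresholds in the definitions of $S$ and $T$ are evidently set precisely so that the two active-pair counts collapse cleanly against them, with the extra factor of $\Delta(G)$ in the $T$-threshold (coming from the $\Delta(G)^q$ rather than $\Delta(G)^{q-1}$ in the denominator) compensating exactly for the extension cost from $\cB$ to $\cA$.
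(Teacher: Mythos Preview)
Your proposal is correct and follows essentially the same approach as the paper's proof: both bound each of the two bad-tuple types by $|\cA|/4$ via counting active endpoint pairs (at most $n\Delta(G)^q/s$ and $n\Delta(G)^{q-1}/s'$, respectively) and multiplying by the threshold from the definitions of $S$ and $T$, with the $T$-case handled by first passing to $\cB$ and then re-extending at cost $\Delta(G)$. The arithmetic and structure are identical.
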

\begin{proof}
	We count the number of tuples $(u_0,\dots,u_{q-1},u_q)\in\cA$ with $u_0\notin S(u_{q})$ as follows. First, we have at most $n$ choices for $u_0$. Then, the number of possibilities for $u_q$ such that there exists at least one tuple $(u_0,u_1,\dots,u_q)\in\cA$ is at most $\Delta(G)^q/s$, since there are obviously at most $\Delta(G)^q$ walks of length $q$ starting from $u_0$, and if the final vertex $u_q$ is such that there exists a good $q$-path from $u_0$ to $u_q$, then we have $h_q(u_0,u_q)\ge s$. 
	Now, given $u_0$ and $u_q$, if $u_0\notin S(u_{q})$, then there are at most $\frac{|\cA|s}{4n\Delta(G)^q}$ tuples of the form $(u_0,\dots,u_{q-1},u_q)\in\cA$ by definition of $S(u_{q})$. Hence, the total number of such tuples is at most $|\cA|/4$.
	
	The same proof shows that the number of $(u_0,\dots,u_{q-1})\in\cB$ with $u_0\notin T(u_{q-1})$ is at most $$n\cdot \frac{\Delta(G)^{q-1}}{s'} \cdot \frac{|\cA|s'}{4n\Delta(G)^q} = \frac{|\cA|}{4\Delta(G)}.$$
	Since any tuple in $\cB$ can be extended to at most $\Delta(G)$ tuples in $\cA$, we see that the number of tuples $(u_0,\dots,u_{q-1},u_q)\in\cA$ with $u_0\notin T(u_{q-1})$ is also at most $|\cA|/4$, which completes the proof.
\end{proof}

\subsection{Lower bound using the SDP method}

Our goal in this subsection is to prove Lemma~\ref{lemma:SDPsurplus}.
We need one more tool which controls the gain that we will achieve with the SDP method.

\begin{prop}\label{prop:intersection sums gain}
	We have $$\sum_{uv\in E(G)} |S(u)\cap T(v)| = \Omega_r\left(\frac{\nu n d^q}{s'}\right),$$ where the sum here is over all ordered pairs $(u,v)$ with $uv\in E(G)$.
\end{prop}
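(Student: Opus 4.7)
The plan is to prove the proposition by a straightforward double counting, using Proposition~\ref{prop:good paths} as the main input. I will lower bound $\sum_{uv\in E(G)} |S(u)\cap T(v)|$ (where the sum is over ordered pairs with $uv\in E(G)$) by counting triples $(u_0, u, v)$ with $u\in U_q$, $v\in U_{q-1}$, $uv\in E(G)$, and $u_0\in S(u)\cap T(v)$. Each such triple contributes exactly $1$ to the sum (via the ordered pair $(u,v)$ with $u\in U_q$ and $v\in U_{q-1}$).

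To produce many such triples, I would observe that each tuple $(u_0,u_1,\dots,u_{q-1},u_q)\in\cA$ with the property that $u_0\in T(u_{q-1})\cap S(u_q)$ gives a valid triple $(u_0,u_q,u_{q-1})$. Indeed, $u_{q-1}u_q$ is an edge of $G$, $u_q\in U_q$, $u_{q-1}\in U_{q-1}$, and by assumption $u_0\in S(u_q)\cap T(u_{q-1})$. By Proposition~\ref{prop:good paths}, at least $|\cA|/2 = \Omega_r(\nu n d^q)$ tuples in $\cA$ satisfy this condition.

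The next step is to bound the multiplicity: how many tuples in $\cA$ can map to the same triple $(u_0,u,v)$? Once $u_0$, $u_{q-1}=v$ and $u_q=u$ are fixed, the interior vertices $u_1,\dots,u_{q-2}$ must form a walk of length $q-1$ from $u_0$ to $v$. Since $(u_0,u_1,\dots,u_{q-1})$ is a sub-path of a good $q$-path, it is itself a good $(q-1)$-path, so $h_{q-1}(u_0,v) < 2s_{0,q-1} = 2s'$ by~\eqref{good}. Hence each triple is produced by fewer than $2s'$ tuples in $\cA$.

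Putting these together, the number of triples is at least $\frac{|\cA|/2}{2s'} = \Omega_r\!\left(\frac{\nu n d^q}{s'}\right)$, which yields the claimed bound. There is no real obstacle here: the substance of the argument has already been packaged into Proposition~\ref{prop:good paths} (which in turn relies on the definitions of $S(u)$ and $T(u)$ as sets of ``typical'' starting vertices), so this step is just the final accounting that converts that input into the desired sum over edges.
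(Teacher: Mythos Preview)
Your proof is correct and essentially identical to the paper's own argument: both invoke Proposition~\ref{prop:good paths} to obtain $\Omega_r(\nu nd^q)$ tuples in $\cA$ with $u_0\in T(u_{q-1})\cap S(u_q)$, map each to the triple $(u_0,u_{q-1},u_q)$, and bound the multiplicity by $h_{q-1}(u_0,u_{q-1})<2s'$.
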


\begin{proof}
By Proposition~\ref{prop:good paths}, there are $\Omega_r(\nu nd^q)$ tuples $(u_0,\dots,u_{q-1},u_q)\in\cA$ which satisfy $u_0\in T(u_{q-1})\cap S(u_q)$. Each of these gives a triple $(u_0,u_{q-1},u_q)$ which is counted in $\sum_{uv\in E(G)} |S(u)\cap T(v)|$. Moreover, any such triple $(u_0,u_{q-1},u_q)$ arises in this way at most $2s'$ times since, if some $(u_0,u_1,\dots,u_{q-1},u_q)\in\cA$ exists, then the number of choices for $u_1,\dots,u_{q-2}$ is at most $h_{q-1}(u_0,u_{q-1})< 2s'$.
\end{proof}

\lateproof{Lemma~\ref{lemma:SDPsurplus}}
For every $u\in V(G)$, define $\vx^u\in \bR^{V(G)}$ by
	$$\vx^u_w=
	\begin{cases}
		- (s'/d^{q-1})^{1/2} \text{ if } w\in T(u), \\
		(s/d^q)^{1/2} \text{ if } w\in S(u), \\
		0 \text{ otherwise.}
	\end{cases}
	$$
	This is well-defined since for any $u\in V(G)$, we have $T(u)\cap S(u)=\emptyset$.
	
	Our goal is to apply Corollary~\ref{cor:general SDP}. Clearly, we have
	$||\vx^u||^2 \le |T(u)|s'/d^{q-1} + |S(u)|s/d^q$.
	For each $u_0\in T(u)$, we know that $h_{q-1}(u_0,u)\ge s'$. Hence, the number of walks of length $q-1$ starting from $u$ is at least $|T(u)|s'$. On the other hand, this quantity is trivially bounded from above by $\Delta(G)^{q-1}$. Hence, we have $|T(u)|s'/d^{q-1}=O_r(1)$. Similarly, the number of walks of length $q$ starting from $u$ is at least $|S(u)|s$ and at most $\Delta(G)^{q}$, implying that $|S(u)|s/d^{q}=O_r(1)$, too. Hence, $||\vx^u||=O_r(1)$ for all $u\in V(G)$.
	
	Now, consider $uv\in E(G)$. We have  $\langle \vx^u,\vx^v\rangle = -a_{uv}+b_{uv}$, where
	\begin{align*}
        a_{uv} &= (|T(u)\cap S(v)|+|S(u)\cap T(v)|)\frac{(ss')^{1/2}}{d^{q-1/2}} ,
        \\
        b_{uv} &= |T(u)\cap T(v)|\frac{s'}{d^{q-1}} +|S(u)\cap S(v)|\frac{s}{d^q}.
	\end{align*}
Invoking Proposition~\ref{prop:intersection sums gain}, we have a ``gain'' of
$$\sum_{uv\in E(G)} a_{uv} = \Omega_r\left( \frac{\nu nd^q}{s'} \frac{(ss')^{1/2}}{d^{q-1/2}} \right) =  \Omega_r\left(\nu nd^{1/2} (s/s')^{1/2}\right).$$
On the other hand, by Lemma~\ref{lem:intersection sums}, we have a ``loss'' of
	$$\sum_{uv\in E(G)} b_{uv} = O_r\left(\frac{nd^{q-1}}{\nu s'}\frac{s'}{d^{q-1}}+\frac{nd^q}{\nu s} \frac{s}{d^{q}} \right) =O_r(n/\nu).$$

Since $s'\le 2s$ by Proposition~\ref{prop:sis} and $\nu\ge d^{-1/10}$ by~\eqref{nu bound}, the second sum is negligible compared to the first, and so by Corollary~\ref{cor:general SDP}, we conclude $\sp(G)= \Omega_r\left(\nu nd^{1/2} (s/s')^{1/2}\right)$.
This completes the proof.
\endproof

Note that in the above proof, $\vx^u$ is the zero vector unless $u\in U_{q-1}\cup U_q$. This essentially means that we find a cut with large surplus in $G[U_{q-1}\cup U_q]$.

\subsection{Random neighbourhood sampling}

Our goal in this subsection is to prove Lemma~\ref{lemma:nhoodsurplus}. While in the proof of Lemma~\ref{lemma:SDPsurplus}, the dominant term for the surplus was given by the sum over all edges $uv$ of the intersection size $|S(u)\cap T(v)|$, here, the surplus will be mainly given by $\sum_{u\in V(G)} |S(u)| $. Recall that $S(u)=\emptyset$ unless $u\in U_q$, so the important vertices here are the ones in~$U_q$. Roughly speaking, we find subsets $A_j$ inside $U_q$ which contain few edges, and use this to find a large cut.

\begin{prop}\label{prop:nhoodgain}
We have $\sum_{u\in V(G)} |S(u)| =  \Omega_r(\nu nd^q/s)$.
\end{prop}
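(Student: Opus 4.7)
The plan is to read off the bound directly from the counting already set up: $|\cA|=\Omega_r(\nu nd^q)$ and Proposition~\ref{prop:good paths} guarantees that a positive fraction of these tuples witness $u_0\in S(u_q)$, while the ``goodness'' of the paths bounds how many tuples a single endpoint-pair can contribute.

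First I would note that by the definition of $S(u)$, we have $S(u)=\emptyset$ whenever $u\notin U_q$, so
$$\sum_{u\in V(G)}|S(u)| \;=\; \sum_{u\in U_q}|S(u)| \;=\; \#\Set{(u_0,u_q)\in U_0\times U_q : u_0\in S(u_q)}.$$
Thus it suffices to lower-bound the number of such endpoint-pairs.

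Next, by Proposition~\ref{prop:good paths}, at least $|\cA|/2 = \Omega_r(\nu nd^q)$ tuples $(u_0,u_1,\dots,u_{q-1},u_q)\in\cA$ satisfy $u_0\in T(u_{q-1})\cap S(u_q)$; in particular they satisfy $u_0\in S(u_q)$. On the other hand, for any fixed endpoint-pair $(u_0,u_q)$, the number of tuples $(u_0,u_1,\dots,u_{q-1},u_q)\in\cA$ with these endpoints is at most the number of walks of length $q$ in $G$ from $u_0$ to $u_q$. Since any such tuple corresponds to a \emph{good} $q$-path, the goodness condition~\eqref{good} with $i=0,j=q$ forces $h_q(u_0,u_q)<2s$. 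Hence each pair $(u_0,u_q)$ with $u_0\in S(u_q)$ is counted at most $2s$ times among these tuples.

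Dividing, we obtain
$$\#\Set{(u_0,u_q): u_0\in S(u_q)} \;\ge\; \frac{|\cA|/2}{2s} \;=\; \Omega_r\!\left(\frac{\nu nd^q}{s}\right),$$
which is the desired bound. There is no real obstacle here: all the work is packaged into Proposition~\ref{prop:good paths} and the dyadic bound on $h_q(u_0,u_q)$ coming from goodness, so the argument is a one-line double count once these ingredients are invoked.
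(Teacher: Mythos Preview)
Your proof is correct and follows essentially the same argument as the paper: interpret $\sum_u|S(u)|$ as the number of endpoint-pairs $(u_0,u_q)$ with $u_0\in S(u_q)$, use Proposition~\ref{prop:good paths} to get $\Omega_r(\nu nd^q)$ tuples in $\cA$ with $u_0\in S(u_q)$, and divide by the bound $h_q(u_0,u_q)<2s$ from goodness. The paper's proof is essentially a one-sentence version of what you wrote.
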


\begin{proof}
The sum $\sum_{u\in V(G)} |S(u)|$ counts the number of pairs $(u_0,u_q)\in U_0\times U_q$ with $u_0\in S(u_q)$. By Proposition~\ref{prop:good paths} and $\cA \geq \Omega_r(\nu nd^q)$, and since each pair $(u_0,u_q)\in U_0\times U_q$ appears in at most $2s$ tuples in $\cA$, we have $\sum_{u\in V(G)} |S(u)| = \Omega_r(\nu nd^q/s)$.
\end{proof}

\lateproof{Lemma~\ref{lemma:nhoodsurplus}}
Let $\mu$ be a sufficiently small constant depending only on~$r$.
Note that $|\cA|\le n^2 (2s)$ since for any choice of $(u_0,u_q)\in U_0\times U_q$ that appears in some tuple in $\cA$, there are at most $2s$ tuples containing it. Moreover, recall $|\cA|=\Omega_r(\nu nd^q)$. Hence, $\frac{\nu d^q}{sn}=O_r(1)$.
Let $$k=\lcl \frac{sn}{2\Delta(G)^q}\rcl.$$
Give every vertex a random label from $\{0,1,\dots,k+1\}$, where $0$ is chosen with probability $1/3$, each of $1,\dots,k$ is chosen with probability $p=\mu\frac{\nu d^q}{sn}$, and the remaining probability falls on $k+1$. This is feasible since $kp\leq \mu/2 +p \leq 2/3$, provided $\mu$ is sufficiently small. For every $j\in [k]$, let $B_j$ be the set of vertices with label~$j$.

Now, pick uniformly at random with repetition vertices $v_1,\dots,v_k\in V(G)$.
For $j\in[k]$, let $A_j$ be the set of vertices $u$ with label $0$ such that $v_j\in S(u)$, but $v_i\notin S(u)$ for all $i\neq j$.

Let $X$ be the number of edges which go between $A_j$ and $B_j$ for some $j\in [k]$, let $Y$ be the number of edges inside some $A_j$ and let $Z$ be the number of edges inside some $B_j$.
\begin{claim}
	$\expn{Y}=O_r(\frac{kd^q}{\nu s})$.
\end{claim}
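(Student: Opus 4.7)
The plan is to bound $\expn{Y}$ by linearity of expectation, controlling for each edge $uv \in E(G)$ the probability that it is contained in some $A_j$. The key observation is structural: if $u, v \in A_j$ for some $j \in [k]$, then by definition of $A_j$ we must have $v_j \in S(u) \cap S(v)$. Since $v_j$ is chosen uniformly at random from $V(G)$, this happens with probability $|S(u) \cap S(v)|/n$, and so a union bound over the $k$ choices of $j$ gives
$$\bP\bigl(uv \text{ contributes to } Y\bigr) \le k \cdot \frac{|S(u) \cap S(v)|}{n}.$$
The additional requirements that both endpoints receive label $0$ and that no other $v_i$ lies in $S(u)\cup S(v)$ only shrink this probability further, but are not needed for the upper bound.

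Summing over all edges $uv \in E(G)$ and applying the first assertion of Lemma~\ref{lem:intersection sums}, which gives $\sum_{uv \in E(G)} |S(u) \cap S(v)| = O_r(nd^q/(\nu s))$, yields
$$\expn{Y} \le \frac{k}{n} \sum_{uv \in E(G)} |S(u) \cap S(v)| = O_r\!\left(\frac{kd^q}{\nu s}\right),$$
as required. This step directly parallels the corresponding bound in the proof of Theorem~\ref{thm:five cycle}, where~\eqref{eqn:five cycle intersection} played the analogous role. The $C_r$-freeness of $G$ has already been fully exploited in establishing Lemma~\ref{lem:intersection sums} (via the short-cycle counting Lemmas~\ref{lemma:smallavgdeg} and~\ref{lemma:shortpathcount}), so no serious obstacle is anticipated here; the claim is essentially a direct application of that codegree bound.
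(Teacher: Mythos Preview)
Your proposal is correct and essentially identical to the paper's own proof: the paper also observes that if $uv$ lies in some $A_j$ then $v_j\in S(u)\cap S(v)$, bounds the probability by $k|S(u)\cap S(v)|/n$, and concludes by summing over edges and invoking Lemma~\ref{lem:intersection sums}. Your additional remarks (that the label-$0$ and exclusion conditions only help, and the parallel with the $C_5$ case) are accurate but not needed for the argument.
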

\claimproof
If an edge $uv\in E(G)$ is in some $A_j$, then $v_j\in S(u)\cap S(v)$. Hence, the probability that $uv$ contributes to $Y$ is at most $k\frac{|S(u)\cap S(v)|}{n}$. Thus,
$$\expn{Y}\leq \sum_{uv\in E(G)} k\frac{|S(u)\cap S(v)|}{n}= O_r\left(\frac{kd^q}{\nu s}\right)$$
by Lemma~\ref{lem:intersection sums}.
\endclaimproof

\begin{claim}
	$\expn{Z}=O(kp^2nd)$.
\end{claim}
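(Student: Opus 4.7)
The plan is to imitate the corresponding computation for the triangle result (Claim 2 in the proof of Theorem~\ref{thm:triangle square threshold}): bound $\mathbb{E}[Z]$ by summing over edges and indices $j\in[k]$, using linearity of expectation together with the fact that the labels are assigned independently.

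Concretely, I would fix an arbitrary edge $uv\in E(G)$ and a fixed index $j\in[k]$, and observe that $uv$ lies inside $B_j$ precisely when both $u$ and $v$ receive label $j$. Since the random labels are assigned independently across vertices, and each vertex receives label $j$ with probability $p$, this occurs with probability exactly $p^2$. Summing over $j\in[k]$ and over all $e(G)=nd/2$ edges, linearity of expectation gives
$$\expn{Z} \le \sum_{j=1}^{k}\sum_{uv\in E(G)} p^2 = k\cdot p^2 \cdot \frac{nd}{2} = O(kp^2nd),$$
as required. There is no real obstacle here: the calculation is a direct union/linearity bound and does not require any structural information about $G$ beyond its edge count.
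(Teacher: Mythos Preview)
Your proof is correct and essentially identical to the paper's: the paper also notes that for each $j$ the expected number of edges in $G[B_j]$ is $p^2 e(G)=p^2\frac{nd}{2}$ and then sums over the $k$ values of~$j$. (In fact your inequality is an equality, since the $B_j$ are disjoint.)
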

\claimproof
For any $j$, the expected number of edges in $G[B_j]$ is $p^2e(G)=p^2\frac{nd}{2}$. Summing over the $k$ possibilities for $j$, the claim follows.
\endclaimproof

\begin{claim}
	$\expn{X}=\Omega_r(kp\nu d^{q+1}/s)$.
\end{claim}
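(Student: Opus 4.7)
The plan is to proceed by a routine first-moment calculation analogous to the corresponding step in Section~\ref{subsec:five cycle}. Fix an edge $uv \in E(G)$ and an index $j \in [k]$. The event ``$uv$ contributes to $X$ through $j$'' is implied by either (i) $u$ is labelled $0$, $v$ is labelled $j$, and $v_j$ is the unique element of $\{v_1,\dots,v_k\}$ lying in $S(u)$, or (ii) the symmetric event with the roles of $u$ and $v$ swapped. These two events are disjoint because $v$ gets only one label. Using independence of the labelling and the sampling of the $v_i$'s, each contributes probability $\tfrac{1}{3}\cdot p\cdot \tfrac{|S(u)|}{n}(1-|S(u)|/n)^{k-1}$ (resp.\ with $v$ in place of $u$).

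The main quantitative point is to control $(1-|S(u)|/n)^{k-1}$ from below by a positive constant. For this I will use the bound $|S(u)|\cdot s \le \Delta(G)^q = O_r(d^q)$, which follows immediately from the definition of $S(u)$ since every element of $S(u)$ is reached by at least $s$ walks of length $q$ starting from $u$. Combined with $k = O_r(sn/d^q)$, this gives $k|S(u)|/n = O_r(1)$, and by choosing constants appropriately (or directly from $k \le sn/(2\Delta(G)^q)+1$ and $|S(u)|\le \Delta(G)^q/s$) we obtain $(k-1)|S(u)|/n \le 1/2$, hence $(1-|S(u)|/n)^{k-1} \ge 1/2$.

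Summing the probability over $j \in [k]$ and then over edges yields
\[
\expn{X} \ge \frac{kp}{6n}\sum_{uv\in E(G)}\bigl(|S(u)|+|S(v)|\bigr) = \frac{kp}{6n}\sum_{u\in V(G)} d(u)\,|S(u)|.
\]
Now I invoke the minimum degree assumption $\delta(G) = \Omega_r(d)$, which comes from the standing hypothesis that $G$ is almost regular, to bound $d(u) = \Omega_r(d)$ for every $u\in V(G)$. This converts the right-hand side into $\Omega_r\!\left(\frac{kpd}{n}\sum_{u\in V(G)} |S(u)|\right)$, and Proposition~\ref{prop:nhoodgain} gives $\sum_u |S(u)| = \Omega_r(\nu n d^q/s)$, producing the claimed bound $\expn{X} = \Omega_r(kp\nu d^{q+1}/s)$.

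The only real obstacle is the verification of $(1-|S(u)|/n)^{k-1} \ge 1/2$; this is why the threshold $k = \lceil sn/(2\Delta(G)^q)\rceil$ was chosen as it was, together with the trivial upper bound $|S(u)| \le \Delta(G)^q/s$. Everything else is routine bookkeeping, and the structure mirrors the ``$\expn{X}$'' claim in the regular $C_5$-free case (Section~\ref{subsec:five cycle}).
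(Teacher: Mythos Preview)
Your proposal is correct and follows essentially the same approach as the paper: bound the per-edge contribution using $|S(u)|s\le \Delta(G)^q$ together with the definition of $k$ to get $(1-|S(u)|/n)^{k-1}\ge 1/2$, sum over edges to obtain $\frac{kp}{6n}\sum_u d(u)|S(u)|$, then apply the minimum-degree hypothesis and Proposition~\ref{prop:nhoodgain}. The only minor remark is that the justification of $|S(u)|s\le \Delta(G)^q$ goes via the good-path condition~\eqref{good} (any $u_0\in S(u)$ lies on some good $q$-path, hence $h_q(u_0,u)\ge s$), not directly via the threshold in the definition of $S(u)$.
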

\claimproof
Let $uv\in E(G)$. For any $j\in[k]$, the probability that $v_j\in S(u)$, but $v_i\not \in S(u)$ for every $i\neq j$, is
$$\frac{|S(u)|}{n}\left(1-\frac{|S(u)|}{n}\right)^{k-1}\geq \frac{|S(u)|}{n}\left(1-\frac{(k-1)|S(u)|}{n}\right)\geq \frac{|S(u)|}{n}\left(1-\frac{(k-1)\Delta(G)^q}{sn}\right)\geq \frac{|S(u)|}{2n},$$
where we used the bound $|S(u)|s\le \Delta(G)^q$ (explained in the proof of Lemma~\ref{lemma:SDPsurplus}) and the definition of~$k$.
Thus, the probability that $uv$ contributes to $X$ is at least $k\frac{p}{6n}(|S(u)|+|S(v)|)$. Hence,
$$\expn{X}\geq \frac{kp}{6n}\sum_{u\in V(G)} d(u)|S(u)|\geq \frac{kp\delta(G)}{6n}\sum_{u\in V(G)} |S(u)| =  \Omega_r(kp\nu d^{q+1}/s),$$
where we have used Proposition~\ref{prop:nhoodgain} in the last step.
\endclaimproof

Combining the three claims, and plugging in $p$ with a sufficiently small $\mu$, we get
$$\expn{X-Y-Z}\geq kd \left(\Omega_r(p\nu d^{q}/s)-O_r\left(\frac{d^{q-1}}{\nu s}\right) -O(p^2n)  \right) \ge kd\left(\Omega_r\left(\frac{\nu^2 d^{2q}}{s^2n}\right)-O_r\left(\frac{d^{q-1}}{\nu s}\right) \right).$$

If $\frac{\nu^2 d^{2q}}{s^2n}=O_r\left(\frac{d^{q-1}}{\nu s}\right)$, then $\frac{\nu^2d^{q+1}}{s}=O_r(n/\nu)=O_r(nd^{1/10})$ by~\eqref{nu bound}, so the surplus we are aiming for already follows from Lemma~\ref{lemma:five cycle sparse}.
Otherwise,
$$\expn{X-Y-Z}= \Omega_r\left(\frac{k\nu^2 d^{2q+1}}{s^2n}\right) = \Omega_r\left(\frac{\nu^2 d^{q+1}}{s}\right).$$
Then there exists an outcome in which $X-Y-Z= \Omega_r\left(\nu^2 d^{q+1}/s\right)$, and therefore
$$\sp(G)\geq \sum_{j=1}^k \sp(G[A_j\cup B_j])\geq \frac{1}{2}(X-Y-Z)=\Omega_r\left(\nu^2 d^{q+1}/s\right)$$
by Lemma~\ref{lemma:surplus additive}, completing the proof.
\endproof

\section{Few triangles} \label{sec:few triangles}

In this section, we prove Theorem~\ref{thm:triangledependence}. Similarly to the proof of Theorem~\ref{thm:strongly regular surplus}, we let $a=\frac{\sqrt{d}}{n-d}$, $0<\gamma\leq 1$ and for every $v\in V(G)$, we define $\vx^v\in \bR^{V(G)}$ by
$$\vx^v_u=
\begin{cases}
1+\gamma a \text{ if } u=v, \\
-\gamma \frac{1}{\sqrt{d}} \text{ if } u\in N(v), \\
\gamma a \text{ otherwise.}
\end{cases}
$$

Just like in the strongly regular case, the sum $\sum_{uv\in E(G)} \frac{\langle \vx^u,\vx^v\rangle}{\|\vx^u\|\|\vx^v\|}$ is negative for a suitable choice of $\gamma$. However, this does not directly lead to a large surplus because the inner product is not negative for every $uv\in E(G)$, and for positive $x$, we have $\arcsin(x)>x$. Nevertheless, for small positive $x$, $\arcsin(x)$ is very close to $x$, so the main discrepancy is caused by edges $uv$ for which $\langle \vx^u,\vx^v\rangle$ is very large. In view of (\ref{eqn:innerproduct}), this happens when $d(u,v)$ is much larger than $d^2/n$. To deal with these edges, we will introduce a slightly modified version of $\vx^v$. For $v\in V(G)$, define $\vy^v\in \mathbb{R}^{V(G)}$ randomly by
$$\vy^v_u=
\begin{cases}
1+\gamma a \text{ if } u=v, \\
-\gamma \frac{1}{\sqrt{d}} \text{ if } u\in N(v) \text{ and } d(u,v)\leq 20d^2/n, \\
\pm \gamma \frac{1}{\sqrt{d}} \text{ if } u\in N(v) \text{ and } d(u,v)> 20d^2/n, \\
\gamma a \text{ otherwise,}
\end{cases}
$$
where, for any $uv\in E(G)$ with $d(u,v)>20d^2/n$, the sign of $\vy^v_u$ is + with probability $1/2$, independently of all other coordinates and vectors.

Our first lemma states that if there are many edges with large codegree, then, when applying Lemma~\ref{lemma:general SDP}, the vectors $\vy^v$ provide significantly better surplus than the vectors $\vx^v$.

\begin{lemma} \label{lemma:compare x and y}
    Let $0<\gamma\leq 1/10$ and $d\leq n/2$. If $G$ is an $n$-vertex $d$-regular graph with at most $d^3/3$ triangles and the vectors $\vx^v$, $\vy^v$ are defined as above, then
    $$\expn{\sum_{uv\in E(G)} \left(\arcsin\left(\frac{\langle \vy^u,\vy^v\rangle}{\|\vy^u\| \|\vy^v\|}\right)-\arcsin\left(\frac{\langle \vx^u,\vx^v\rangle}{\|\vx^u\| \|\vx^v\|}\right)\right)} \leq \frac{\gamma n\sqrt{d}}{4}-\gamma^2\left(\sum_{\substack{uv\in E(G): \\ d(u,v)>20d^2/n}} \frac{d(u,v)}{10d}\right).$$
\end{lemma}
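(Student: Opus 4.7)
The plan is to bound the expected difference of arcsines edge-by-edge, exploiting the fact that $\vy^v$ differs from $\vx^v$ only in signs. First I would verify that $\|\vy^v\|=\|\vx^v\|$ holds deterministically (not merely in expectation), since the two vectors agree in absolute value at every coordinate; one then checks $\|\vx^v\|=O(1)$ under $d\le n/2$, so every arcsine argument lies in a bounded subinterval of $(-1,1)$ where $\arcsin$ is Lipschitz. For each vertex $v$ let $R_v=\{w\in N(v):d(w,v)>20d^2/n\}$. Decomposing $\langle \vy^u,\vy^v\rangle$ coordinate-wise and using the mutual independence of the random signs at the different $(v,w)$ pairs, every term in which at least one coordinate is randomized is mean zero, so
\[
\expn{\langle\vy^u,\vy^v\rangle}=\langle\vx^u,\vx^v\rangle-\sum_{w\in R_u\cup R_v}\vx^u_w\,\vx^v_w.
\]

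I would then split the edge set into $E_{\mathrm{good}}=\{uv:d(u,v)\le 20d^2/n\}$ and $E_{\mathrm{bad}}=\{uv:d(u,v)>20d^2/n\}$. For an edge $uv\in E_{\mathrm{good}}$ neither $u\in R_v$ nor $v\in R_u$, so the dominant negative contribution $-2(1+\gamma a)\gamma/\sqrt{d}$ from the $\{u,v\}$-coordinates survives in expectation; the only fluctuations come from coordinates $w\in R_u\cup R_v$ with $w\notin\{u,v\}$. Using Lipschitzness of $\arcsin$ on the relevant interval, the per-edge expected difference of arcsines is controlled by the expected change in the inner product. Aggregating over $E_{\mathrm{good}}$ and bounding the number of common neighbours with abnormally high codegree via the hypothesis $t(G)\le d^3/3$, the total contribution is at most $\gamma n\sqrt{d}/4$.

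For an edge $uv\in E_{\mathrm{bad}}$, both $\vy^u_v$ and $\vy^v_u$ are randomized, so the $-2(1+\gamma a)\gamma/\sqrt{d}$ term vanishes in expectation, but the large positive contribution $|N(u)\cap N(v)|\cdot\gamma^2/d=d(u,v)\gamma^2/d$ coming from common neighbours is also killed whenever the common neighbour lies in $R_u\cup R_v$, which (by definition of bad edges and the fact that many of the common neighbours will themselves have high codegree with one of $u,v$) holds for a constant fraction of them. Combining the concavity of $\arcsin$ on $[-1,0]$ via Jensen with the bound $\arcsin(x)\le\tfrac{\pi}{2}x$ for $x\ge 0$, I would conclude
\[
\expn{\arcsin\!\Bigl(\tfrac{\langle\vy^u,\vy^v\rangle}{\|\vy^u\|\,\|\vy^v\|}\Bigr)}-\arcsin\!\Bigl(\tfrac{\langle\vx^u,\vx^v\rangle}{\|\vx^u\|\,\|\vx^v\|}\Bigr)\le -\gamma^2\,\frac{d(u,v)}{10d}
\]
for each bad edge, which summed over $E_{\mathrm{bad}}$ yields the negative term on the right.

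The main obstacle I foresee is controlling the correction coming from the ``off-diagonal'' randomized coordinates $w\in(R_u\cup R_v)\setminus\{u,v\}$: these are neighbours of $u$ or $v$ with abnormally large codegree, and their joint contribution appears both in the expected inner product and in its variance in a subtle way. Handling this will require combining the triangle-count assumption $t(G)\le d^3/3$ with a careful double-counting of paths through high-codegree vertices, so that the cumulative error can be absorbed into the $\gamma n\sqrt d/4$ term. A secondary technical issue is ensuring that $\langle\vy^u,\vy^v\rangle/(\|\vy^u\|\,\|\vy^v\|)$ stays bounded away from $\pm 1$ so $\arcsin$ does not blow up, which should follow from the uniform bound $\|\vx^v\|=\Theta(1)$ and the choice $\gamma\le 1/10$.
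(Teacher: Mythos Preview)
Your approach has two genuine gaps.

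\textbf{Nonlinearity of $\arcsin$.} You propose to bound the expected difference of arcsines via the expected change in the inner product, invoking Lipschitzness and ``concavity of $\arcsin$ on $[-1,0]$ via Jensen''. But Lipschitzness only gives $|\arcsin(Y)-\arcsin(x)|\le L|Y-x|$, hence a bound in terms of $\expn{|Y-x|}$, not $|\expn{Y}-x|$; these can differ substantially. Jensen does not help either, since $\langle\vy^u,\vy^v\rangle$ takes both signs with positive probability and $\arcsin$ is convex on $[0,1]$ and concave on $[-1,0]$. The paper handles this differently: it first checks that all arguments lie in $[-1/2,1/2]$, then writes the arcsine difference via the mean value theorem as $\alpha(\langle\vy^u,\vy^v\rangle-\langle\vx^u,\vx^v\rangle)/q$ with a \emph{random} $\alpha\in[1,2]$, decomposes the inner-product difference into signed terms, and bounds each term using $\alpha\ge 1$ on the negative ones and $\alpha\le 2$ on the positive ones \emph{before} taking expectation.

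\textbf{Where the gain comes from.} Your per-bad-edge inequality $\expn{\arcsin(\cdot)}-\arcsin(\cdot)\le -\gamma^2 d(u,v)/(10d)$ cannot hold in general. For a bad edge $uv$ the loss from the two diagonal coordinates is $\Theta(\gamma/\sqrt d)$, first-order in $\gamma$, whereas any gain from common neighbours is at most $d(u,v)\gamma^2/d$, second-order in $\gamma$; when $d^{3/2}<n$ the loss dominates. Moreover, your claim that ``a constant fraction'' of the common neighbours $w$ of a bad edge $uv$ satisfy $w\in R_u\cup R_v$ is unfounded: $d(u,v)$ being large says nothing about $d(u,w)$ or $d(v,w)$. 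In the paper the negative term arises not per bad edge but from a global double count: summing the gain term $|S(u)\cap N(v)|+|S(v)\cap N(u)|$ over \emph{all} edges $uv$ (good and bad alike) counts triples $(u,v,w)$ forming a triangle with $d(u,w)>20d^2/n$, which equals $2\sum_{uw\text{ bad}}d(u,w)$. The bad edges themselves contribute a \emph{loss} of $O(\gamma/\sqrt d)$ each, absorbed into $\gamma n\sqrt d/4$ via the hypothesis $t(G)\le d^3/3$ (which bounds the number of bad edges by $nd/20$).
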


The proof of this lemma is a tedious calculation, so it is given in the appendix.
Choose signs in a way that
$$\sum_{uv\in E(G)} \left(\arcsin\left(\frac{\langle \vy^u,\vy^v\rangle}{\|\vy^u\| \|\vy^v\|}\right)-\arcsin\left(\frac{\langle \vx^u,\vx^v\rangle}{\|\vx^u\| \|\vx^v\|}\right)\right) \leq \frac{\gamma n\sqrt{d}}{4}-\gamma^2\left(\sum_{\substack{uv\in E(G): \\ d(u,v)>20d^2/n}} \frac{d(u,v)}{10d}\right).$$
By Lemma~\ref{lemma:general SDP}, we get
\begin{align}
    \sp(G)
    &\geq -\frac{1}{\pi}\sum_{uv\in E(G)} \arcsin\left(\frac{\langle \vy^u,\vy^v \rangle}{\|\vy^u\|\|\vy^v\|}\right) \nonumber \\
    &\geq -\frac{1}{\pi}\left(\sum_{uv\in E(G)} \arcsin\left(\frac{\langle \vx^u,\vx^v \rangle}{\|\vx^u\|\|\vx^v\|}\right)+\frac{\gamma n\sqrt{d}}{4}-\gamma^2\left(\sum_{\substack{uv\in E(G): \\ d(u,v)>20d^2/n}} \frac{d(u,v)}{10d}\right)\right). \label{eqn:sp with pi}
\end{align}

We can use this inequality to prove the following lemma, which is a lower bound on the surplus of a graph in terms of the number of triangles and the codegrees. Recall that for vertices $u,v$, we write $\delta(u,v)=d(u,v)-d^2/n$. Let $\delta_+(u,v)$ be equal to $\delta(u,v)$ when $\delta(u,v)>0$, and otherwise let $\delta_+(u,v)=0$.

\begin{lemma} \label{lemma:sp with deltaplus}
    Let $0<\gamma\leq 1/10$ and $d\leq n/2$. If $G$ is an $n$-vertex $d$-regular graph with $d^3/6+s$ triangles where $s\leq d^3/6$, then
    $$\sp(G)\geq \Theta(\gamma n\sqrt{d})-\Theta\left(\frac{\gamma^2 s}{d}\right)-\Theta\left(\gamma^2 \sum_{\substack{uv\in E(G): \\ d(u,v)\leq 20d^2/n}} \frac{\delta_+(u,v)^3}{d^3}\right).$$
\end{lemma}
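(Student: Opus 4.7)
I would begin from inequality~\eqref{eqn:sp with pi}, which reduces the task to bounding $-\sum_{uv\in E(G)}\arcsin(r_{uv})$ from below, where $r_{uv}=\langle\vx^u,\vx^v\rangle/(\|\vx^u\|\|\vx^v\|)$. Since $|\langle\vx^u,\vx^v\rangle|\leq 2\gamma/\sqrt{d}+\gamma^2 c\,|\delta(u,v)|$ with $c=1/d+2a/\sqrt{d}+a^2=\Theta(1/d)$ (using $d\leq n/2$) and $|\delta(u,v)|\leq d$, the assumptions $\gamma\leq 1/10$ and $\|\vx^u\|\geq 1$ force $|r_{uv}|=O(\gamma)\leq 1/2$ uniformly on $E(G)$. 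This lets me apply the elementary Taylor bound $\arcsin(r)\leq r+\kappa r_+^3$, valid for $|r|\leq 1/2$ with some absolute constant $\kappa$ and $r_+=\max(r,0)$.

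I would then split the cubic sum $\sum_{uv\in E(G)} r_{uv,+}^3$ according to whether $d(u,v)\leq 20d^2/n$ or $d(u,v)>20d^2/n$. On the large-codegree edges, $r_{uv,+}\leq O(\gamma^2 d(u,v)/d)$, so $r_{uv,+}^3\leq O(\gamma^6 d(u,v)/d)$ using $d(u,v)\leq d$; since $\gamma\leq 1/10$, this contribution is much smaller than the gain $\gamma^2 d(u,v)/(10d)$ already present in~\eqref{eqn:sp with pi}, so the two cancel and the large-codegree edges drop out. On the small-codegree edges, $r_{uv,+}\leq O(\gamma^2\delta_+(u,v)/d)$ instead, which gives $\kappa\sum r_{uv,+}^3\leq O(\gamma^6\sum\delta_+(u,v)^3/d^3)\leq O(\gamma^2\sum\delta_+(u,v)^3/d^3)$ since $\gamma\leq 1$, reproducing the cubic loss term of the lemma.

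For the linear part $\sum_{uv\in E(G)} r_{uv}$ I would use the $d$-regularity crucially. A direct computation shows $\|\vx^u\|^2=1+2\gamma a+\gamma^2 n/(n-d)$ for every $u$; denote this common value by $K^2$ and note that $d\leq n/2$ gives $K^2=1+O(\gamma)$. Hence $\|\vx^u\|\|\vx^v\|=K^2$ is a global constant. Summing~\eqref{eqn:innerproduct} over all edges and using $\sum_{uv\in E(G)}\delta(u,v)=3t(G)-e(G)\cdot d^2/n=3s$ yields $\sum_{uv\in E(G)}\langle\vx^u,\vx^v\rangle=-\gamma n\sqrt{d}+3\gamma^2 c s$, so $-\sum_{uv\in E(G)} r_{uv}=K^{-2}(\gamma n\sqrt{d}-3\gamma^2 c s)\geq (1-O(\gamma))\gamma n\sqrt{d}-\Theta(\gamma^2 s/d)$. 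Adding this to the $\vy$-loss $-\gamma n\sqrt{d}/4$ from~\eqref{eqn:sp with pi}, which absorbs only a constant fraction of the main term when $\gamma\leq 1/10$, and dividing by $\pi$ gives the required lower bound.

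The main obstacle I would anticipate is precisely this last normalization step. For a general graph the norms $\|\vx^u\|$ vary from vertex to vertex, inner products have mixed signs, and crude estimates on $\sum_{uv\in E(G)}|\delta(u,v)|$ propagate to a spurious $\gamma^4 d^2$ term that is not controlled by $\Theta(\gamma^2 s/d)$ when $s$ is small. The $d$-regularity hypothesis fixes this in the sharpest way: because $\|\vx^u\|$ depends only on the degree, the normalization is a single global scalar $K^{-2}$ that can be pulled out of every sum, so the only residual error lands harmlessly in the $\Theta(\gamma^2 s/d)$ allowance already present in the lemma.
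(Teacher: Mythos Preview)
Your proposal is correct and follows essentially the same route as the paper's proof. Both arguments apply the Taylor bound $\arcsin(r)\le r+O(r_+^3)$ to the normalised inner products, use the $d$-regularity to make $\|\vx^v\|^2$ a single constant so that the linear sum collapses via $\sum_{uv\in E(G)}\delta(u,v)=3s$, and cancel the cubic loss on large-codegree edges against the gain term $\gamma^2\sum_{d(u,v)>20d^2/n}d(u,v)/(10d)$ coming from~\eqref{eqn:sp with pi}; the only cosmetic difference is that the paper bounds the cubic on all edges first and splits afterwards, whereas you split before bounding.
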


The proof is again given in the appendix.
Our goal is now to remove the ``error term" in the above lemma and prove the following stronger result.

\begin{lemma} \label{lemma:sp without deltaplus}
    Let $0<\gamma\leq 1/10$ and $d\leq n/2$. If $G$ is an $n$-vertex $d$-regular graph with $d^3/6+s$ triangles where $s\leq d^3/6$, then
    $\sp(G)\geq \Theta(\gamma n\sqrt{d})-\Theta\left(\frac{\gamma^2 s}{d}\right)$.
\end{lemma}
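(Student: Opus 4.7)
The plan is to eliminate the error term
$$\gamma^2 \sum_{uv\in E(G),\, d(u,v)\le 20 d^2/n} \frac{\delta_+(u,v)^3}{d^3}$$
that appears in Lemma~\ref{lemma:sp with deltaplus}. Write $E_1 = \{uv\in E(G) : d(u,v)\le 20 d^2/n\}$ for the set of edges to which the error refers. My first observation is that for every $uv\in E_1$, the normalised inner product $\langle \vx^u,\vx^v\rangle /(\|\vx^u\|\|\vx^v\|)$ (the argument of $\arcsin$ in Lemma~\ref{lemma:general SDP}) is non-positive as long as the main negative contribution $-2\gamma/\sqrt{d}$ dominates the codegree correction $\gamma^2 \delta(u,v)/d$ that was computed in the proof of Theorem~\ref{thm:strongly regular surplus}. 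A short calculation shows that this happens as soon as $\delta(u,v) \le O(\sqrt{d}/\gamma)$, which in turn holds for every edge in $E_1$ provided $d \le O((n/\gamma)^{2/3})$. In this ``sparse'' regime, $\arcsin$ is non-positive on $E_1$, the error term is effectively absent, and Lemma~\ref{lemma:sp with deltaplus} directly gives the claim.

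For the complementary ``dense'' regime, where $d$ exceeds that threshold, the naive bound $\sum_{uv\in E_1} \delta_+(u,v)^3/d^3 \le O(d^4/n^2)$ (obtained from $\delta_+\le 20 d^2/n$ together with the triangle constraint $\sum_{uv\in E(G)} \delta_+(u,v)\le 3s + d^3/2$, which follows from $\sum_{uv} \delta(u,v) = 3s$ and $\delta_- \le d^2/n$) is too weak to be absorbed into $\gamma n\sqrt{d}$. To handle this regime I would extend the random-sign construction used to define $\vy$ in Lemma~\ref{lemma:compare x and y}: instead of flipping signs only on edges with $d(u,v)>20 d^2/n$, apply random $\pm$ signs to all edges whose codegree exceeds a smaller threshold of order $\sqrt{d}/\gamma$. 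On these additional edges the independent signs force the expected contribution to $\langle \vy^u,\vy^v\rangle$ to vanish, removing exactly the positive arcsin deviations responsible for the $\delta_+^3/d^3$ error, at the cost of also forfeiting the negative contribution $-2\gamma/\sqrt{d}$ that $\vx$ gave on those edges.

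The main obstacle is proving the analogue of Lemma~\ref{lemma:compare x and y} for this enlarged random-sign set. One has to verify that the total ``loss'' from surrendering the negative contributions on the extra edges remains of order $\gamma n\sqrt{d}$, so that it can be absorbed into the positive main term, while the ``gain'' from suppressing positive inner products on edges of moderate codegree is still large enough to kill the $\delta_+^3/d^3$ error. Carrying this out requires redoing, with the new sign distribution, the tedious norm and cross-term computations that underlie Lemma~\ref{lemma:compare x and y} and Lemma~\ref{lemma:sp with deltaplus} (relegated to the appendix in the original proofs), with particular attention paid to asymmetric pairs where the new threshold makes $\vy^u_v$ random but $\vy^v_u$ deterministic, or vice versa.
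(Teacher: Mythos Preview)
Your ``sparse'' observation is correct: when $d\le c_0(n/\gamma)^{2/3}$ for a suitable constant $c_0$, every edge $uv\in E_1$ has $\delta(u,v)\le 19d^2/n=O(\sqrt{d}/\gamma)$, so by (\ref{eqn:innerproduct}) the inner product $\langle\vx^u,\vx^v\rangle$ is non-positive and the cubic correction in the appendix derivation of Lemma~\ref{lemma:sp with deltaplus} never appears for those edges. This does give the conclusion in that regime, though it requires re-running the appendix computation rather than quoting the lemma as stated.

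The dense case, however, does not go through; the loss you flag as the ``main obstacle'' is genuinely fatal. If you flip signs on all edges with codegree above $T=C\sqrt{d}/\gamma$, the indicator term in the analogue of Lemma~\ref{lemma:compare x and y} contributes a loss of order $\frac{\gamma}{\sqrt{d}}$ per flipped edge. Since $\sum_{uv}d(u,v)=3t(G)\le d^3$, there can be as many as $d^3/T=\gamma d^{5/2}/C$ such edges, giving a total loss of order $\gamma^2 d^2/C$. In the dense regime $\gamma d^{3/2}\gg n$ this dominates $\gamma n\sqrt{d}$, and the ratio can be as large as $\sqrt{d}$, so the main term is swamped. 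Raising $C$ does not help: for the residual $\delta_+^3$ error to be harmless you need the threshold at $O(\sqrt{d}/\gamma)$, while keeping the loss below $\gamma n\sqrt{d}$ forces the threshold to be $\Omega(d^2/n)$; these are compatible only in the sparse regime you already covered. The ``gain'' side of Lemma~\ref{lemma:compare x and y}, of order $\gamma^2 d(u,v)/d$, is also too small on edges with $d(u,v)\approx T$ to compensate.

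The paper's proof abandons the SDP entirely when the error term is large. It splits on whether $\sum_{uv\in E_1}\delta_+(u,v)^3$ is below or above a multiple of $sd^2$. If below, the error is absorbed into $\Theta(\gamma^2 s/d)$ and Lemma~\ref{lemma:sp with deltaplus} finishes directly. If above, the abundance of edges with large $\delta_+$ is exploited combinatorially: ordered edges of $G$ become vertices of an auxiliary graph $H$ whose edges encode triangles of $G$; Lemma~\ref{lemma:surplusinaux} locates a subset of $V(H)$ that is noticeably sparser than random; projecting into a single neighbourhood of $G$ and applying Lemma~\ref{lemma:surplusfromsparse} converts this into surplus of order $\frac{n}{d^4}\sum_{uv\in E_1}\delta_+(u,v)^3\ge\Omega\bigl(\gamma^2\sum_{uv\in E_1}\delta_+(u,v)^3/d^3\bigr)$, which then cancels the error term of Lemma~\ref{lemma:sp with deltaplus}.
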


This is a significant sharpening (in the regular case) of Corollary 1.2 from \cite{CKLMST:18}, which states that if $G$ is a $d$-degenerate graph with $m$ edges and $t$ triangles, then for any $\gamma\leq 1$, we have $\sp(G)\geq \frac{\gamma m}{2\pi \sqrt{d}}-\frac{\gamma^2 t}{2d}$. Note that $s$ is much smaller than $t$, and can be negative. Given Lemma~\ref{lemma:sp without deltaplus}, it is easy to deduce Theorem~\ref{thm:triangledependence}.

\lateproof{Theorem~\ref{thm:triangledependence}}
We claim that for any $0<\gamma\leq 1/10$,
\begin{equation}
    \sp(G)\geq \Theta(\gamma n\sqrt{d})-\Theta\left(\frac{\gamma^2 s}{d}\right). \label{eqn:sp with gamma}
\end{equation}
When $s\leq d^3/6$, this follows from Lemma~\ref{lemma:sp without deltaplus}. Otherwise, $s=\Theta(t(G))$ and we can use the aforementioned Corollary 1.2 from \cite{CKLMST:18} to obtain the desired inequality.

When $s<-nd^{3/2}$, take $\gamma=1/10$ and use (\ref{eqn:sp with gamma}) to get $\sp(G)= \Omega(|s|/d)$.

When $-nd^{3/2}\leq s\leq nd^{3/2}$, take $\gamma$ to be a sufficiently small constant and use (\ref{eqn:sp with gamma}) to get $\sp(G)= \Omega(n\sqrt{d})$.

Finally, when $s>nd^{3/2}$, take $\gamma=c\frac{nd^{3/2}}{s}$ for a sufficiently small constant $c$ and use (\ref{eqn:sp with gamma}) to get $\sp(G)=\Omega(n^2d^2/s)$.
\endproof

The rest of this section is devoted to the proof of Lemma~\ref{lemma:sp without deltaplus}.

\begin{lemma} \label{lemma:surplusinaux}
    Let $H$ be a graph on $N$ vertices. Let $D>0$ be a real number. For $z\in V(H)$, let $\Delta_+(z)=d_H(z)-D$ if $d_H(z)>D$, and otherwise let $\Delta_+(z)=0$. Then there exists a positive absolute constant $c$ with the following property. If \begin{equation}
        e(H)\leq \frac{ND}{2}+\frac{c}{D^2}\sum_{\substack{z\in V(H): \\ d_H(z)\leq 20D}} \Delta_+(z)^3, \label{eqn:bound on edges of H}
    \end{equation}
    then there exists some $T\subset V(H)$ such that $$e_H(T)\leq \frac{|T|^2}{2}\frac{D}{N}-\frac{c}{D^2}\sum_{\substack{z\in V(H): \\ d_H(z)\leq 20D}} \Delta_+(z)^3.$$
\end{lemma}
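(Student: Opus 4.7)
The plan is to produce $T$ by random sampling, where each $z\in V(H)$ is included independently with probability $p_z=1/2-q_z$ and the correction $q_z\in[0,1/2]$ downweights high-degree vertices. Concretely, I would set $q_z=0$ when $d_H(z)\le D$ and
\[
q_z=\frac{\Delta_+(z)}{2(d_H(z)+D/N)}
\]
when $z\in V_+:=\{z:d_H(z)>D\}$; an easy check gives $q_z\le 1/2$. Writing $f(T):=\frac{|T|^2 D}{2N}-e_H(T)$ and $E:=\frac{c}{D^2}\sum_{z:d_H(z)\le 20D}\Delta_+(z)^3$, the goal is $\bE[f(T)]\ge E$, from which the desired $T$ exists by the probabilistic method.

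First I would compute $\bE[|T|^2]=\sum_z p_z+(\sum_z p_z)^2-\sum_z p_z^2$ and $\bE[e_H(T)]=\sum_{zw\in E(H)}p_z p_w$, substitute $p_z=1/2-q_z$, and regroup. The $q_z$-independent terms collapse to $\frac{ND}{8}+\frac{D}{8}-\frac{e(H)}{4}$, which is at least $\frac{D}{8}-\frac{E}{4}$ by the hypothesis $e(H)\le ND/2+E$. Using $\sum_{zw\in E(H)}(q_z+q_w)=\sum_z q_z d_H(z)$, the linear-in-$q_z$ contributions combine to $\frac{1}{2}\sum_z q_z(d_H(z)-D)=\frac{1}{2}\sum_{z\in V_+}q_z\Delta_+(z)$. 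The quadratic terms are controlled by AM--GM, $\sum_{zw\in E(H)}q_z q_w\le\frac{1}{2}\sum_z q_z^2 d_H(z)$. Discarding the further positive term $\frac{D(\sum q_z)^2}{2N}$ yields
\[
\bE[f(T)]\ge \frac{D}{8}-\frac{E}{4}+\frac{1}{2}\sum_{z\in V_+}q_z\bigl[\Delta_+(z)-q_z(d_H(z)+D/N)\bigr].
\]
By design, the chosen $q_z$ maximises each summand pointwise, with optimal value $\frac{\Delta_+(z)^2}{4(d_H(z)+D/N)}$.

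Next I would extract $E$ from this sum. For $z\in V_1:=\{D<d_H(z)\le 20D\}$ we have $d_H(z)+D/N\le 21D$ and, crucially, $\Delta_+(z)\le 19D$; the latter is exactly why the error sum in the lemma is restricted to $d_H(z)\le 20D$. Hence $\sum_{V_1}\Delta_+(z)^2\ge \frac{1}{19D}\sum_{V_1}\Delta_+(z)^3$, so the $V_1$-contribution alone gives
\[
\frac{1}{8}\sum_{V_1}\frac{\Delta_+(z)^2}{d_H(z)+D/N}\ge \frac{1}{8\cdot 21\cdot 19}\cdot\frac{1}{D^2}\sum_{V_1}\Delta_+(z)^3=\frac{E}{3192\,c}.
\]
Picking any $c\le 1/5000$ forces $\frac{E}{3192c}\ge \frac{5E}{4}$, so $\bE[f(T)]\ge \frac{D}{8}+E\ge E$, as wanted. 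Vertices with $d_H(z)>20D$ contribute only positively and may be ignored.

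The main obstacle is striking the right balance in $q_z$: increasing $q_z$ gains a linear-in-$\Delta_+(z)$ term in the expectation but incurs a quadratic penalty $q_z^2 d_H(z)$, and only the scaling $q_z\asymp \Delta_+(z)/d_H(z)$ converts the net gain into $\Delta_+(z)^2/d_H(z)$. The threshold $20D$ in the error sum is then precisely what turns this into $\Delta_+(z)^3/D^2$ via the pointwise bound $\Delta_+(z)\le 19D$ on $V_1$.
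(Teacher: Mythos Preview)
Your argument is correct and takes a genuinely different route from the paper's proof. The paper argues by dyadic pigeonholing: it picks a scale $i$ for which $\sum_{z:\,2^{-i}20D<\Delta_+(z)\le 2^{-(i-1)}20D}\Delta_+(z)^3\ge q/2^i$, lets $S$ be the vertices at that scale, deletes a uniformly random subset $S'\subset S$ with retention probability $p=2^{-i}$, and sets $T=V(H)\setminus S'$. The computation then exploits that on $S$ both $d_H(z)$ and $\Delta_+(z)$ are pinned down up to constant factors. By contrast, you avoid any bucketing by letting the inclusion probability vary smoothly with the vertex, $p_z=\tfrac12-q_z$ with $q_z=\Delta_+(z)/(2(d_H(z)+D/N))$; the optimisation of each summand $q_z[\Delta_+(z)-q_z(d_H(z)+D/N)]$ directly produces the gain $\Delta_+(z)^2/(4(d_H(z)+D/N))$, and the restriction $d_H(z)\le 20D$ in the error sum is then used exactly once, via $\Delta_+(z)\le 19D$, to convert this to $\Delta_+(z)^3/D^2$. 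Your approach is cleaner and yields a better constant ($c=1/5000$ versus the paper's $c=1/(12\cdot 40^2)=1/19200$); the paper's approach has the minor advantage that once the dyadic level is fixed, the sampling is uniform and the arithmetic is elementary, whereas yours requires keeping track of the cross-term $\frac{D}{2N}\sum_z q_z^2$ (which you correctly absorb into the bracket).
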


\begin{proof}
Let $c=\frac{1}{12\cdot 40^2}$ and let $H$ be a graph on $N$ vertices satisfying (\ref{eqn:bound on edges of H}).
Write $$q=\sum_{\substack{z\in V(H): \\ d_H(z)\leq 20D}} \Delta_+(z)^3.$$ Then
\begin{equation}
    e(H)\leq \frac{ND}{2}+\frac{c}{D^2}q. \label{eqn:upper bound on e(H)}
\end{equation}
Observe that there exists a positive integer $i$ such that $$\sum_{z:\,2^{-i}20D< \Delta_+(z)\leq 2^{-(i-1)}20D} \Delta_+(z)^3\geq q/2^{i}.$$
Letting $S$ be the set of $z\in V(H)$ with $2^{-i}20D< \Delta_+(z)\leq 2^{-(i-1)}20D$, we get
\begin{equation}
    |S|\geq \frac{q/2^{i}}{(2^{-(i-1)}20D)^3}= \frac{2^{2i}q}{40^3D^3}. \label{eqn:lower bound on S}
\end{equation}
Let $p=2^{-i}$ and let $S'$ be the random subset of $S$ obtained by keeping each vertex of $S$ with probability $p$, independently of the other vertices. Let $T=V(H)\setminus S'$.

\begin{claim}
\begin{equation}
    \mathbb{E}[e_H(T)]\leq \frac{\mathbb{E}[|T|^2]}{2}\frac{D}{N}-\frac{c}{D^2}q. \label{eqn:expectedgain}
\end{equation}
\end{claim}
This suffices since then we can find $T\subset V(H)$ such that $e_H(T)\leq\frac{|T|^2}{2}\frac{D}{N}-\frac{c}{D^2}q$. \claimproof
For a set $X\subset V(H)$, write $e(X)$ for the number of edges in $H[X]$. For disjoint sets $X$ and $Y$, write $e(X,Y)$ for the number of edges between $X$ and $Y$ in $H$.

We start by giving an upper bound for $\expn{e(T)}$. Let $R=V(H)\setminus S$. Since $T=V(H)\setminus S'$, we have $e(T)=e(H)-e(T,S')-e(S')=e(H)-e(R,S')-e(S\setminus S',S')-e(S')$. Taking expectations, we get
\begin{align}
\expn{e(T)}
&=e(H)-\expn{e(R,S')}-\expn{e(S\setminus S',S')}-\expn{e(S')} \nonumber \\
&=e(H)-pe(R,S)-2(1-p)pe(S)-p^2e(S) \nonumber \\
&=e(H)-p(e(R,S)+2e(S))+p^2e(S) \nonumber \\
&=e(H)-p\left(\sum_{z\in S} d_H(z)\right)+p^2e(S) \nonumber \\
&\leq e(H)-p|S|(D+2^{-i}20D)+p^2e(S) \nonumber \\
&\leq e(H)-p|S|D-p2^{-i}|S|20D+p^2\frac{|S|21D}{2} \nonumber \\
&\leq e(H)-p|S|D-\frac{1}{3}2^{-2i}|S|20D, \nonumber \\
&\leq \frac{ND}{2}+\frac{c}{D^2}q-p|S|D-\frac{1}{3}2^{-2i}|S|20D, \label{eqn:bound on e(T)}
\end{align}
where the first inequality follows from the fact every $z\in S$ has $\Delta_+(z)>2^{-i}20D$, the second inequality holds since $\Delta_+(z)\leq 2^{-(i-1)}20D\leq 20D$ for every $z\in S$, and the last inequality is true because of (\ref{eqn:upper bound on e(H)}). By (\ref{eqn:lower bound on S}), we have $\frac{1}{3}2^{-2i}|S|20D\geq \frac{q}{6\cdot 40^2 D^2}=\frac{2c}{D^2}q$. Hence, by (\ref{eqn:bound on e(T)}) we have
$$\expn{e(T)}\leq \frac{ND}{2}-p|S|D-\frac{c}{D^2}q.$$

On the other hand, $\expn{|T|^2}\geq \expn{|T|}^2=(N-p|S|)^2\geq N^2-2p|S|N$, so
$$\frac{\expn{|T|^2}}{2}\frac{D}{N}\geq \frac{ND}{2}-p|S|D,$$
which completes the proof of the claim.
\end{proof}

Finally, we need the following lemma.

\begin{lemma} \label{lemma:surplusfromsparse}
    Let $G$ be a $d$-regular graph on $n$ vertices and let $S\subset V(G)$ be a subset. Then $\sp(G)\geq \frac{1}{2}\left(\frac{|S|^2}{2}\frac{d}{n}-e_G(S)\right)$.
\end{lemma}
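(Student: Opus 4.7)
The plan is to deduce this bound from a single biased random cut with independent coordinates. For any vector $\boldsymbol{\mu}\in[-1,1]^{V(G)}$, let $\vx\in\{\pm 1\}^{V(G)}$ have independent coordinates with $\bE[x_v]=\mu_v$, and write $A$ for the adjacency matrix of~$G$. The cut $\{v:x_v=+1\}$ versus $\{v:x_v=-1\}$ has exactly $e(G)/2-\tfrac14\vx^{T}A\vx$ crossing edges, so taking expectations and using $\mc(G)\ge\bE[\text{random cut size}]$ yields
\[
\sp(G)\;\ge\;-\tfrac14\,\bE[\vx^{T}A\vx]\;=\;-\tfrac14\,\boldsymbol{\mu}^{T}A\boldsymbol{\mu},
\]
where the second equality follows from independence of the $x_v$ together with the vanishing of the diagonal of~$A$.

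The plan is then to apply this estimate to the single vector $\boldsymbol{\mu}:=\mathbf{1}_S-(s/n)\mathbf{1}$, where $s=|S|$, $\mathbf{1}_S$ is the indicator of $S$ and $\mathbf{1}$ is the all-ones vector. Its entries are $(n-s)/n\in[0,1]$ on $S$ and $-s/n\in[-1,0]$ on $V(G)\setminus S$, so $\boldsymbol{\mu}$ is admissible. Invoking $d$-regularity in the form $A\mathbf{1}=d\mathbf{1}$ (which gives $\mathbf{1}^{T}A\mathbf{1}=nd$ and $\mathbf{1}_S^{T}A\mathbf{1}=sd$), together with the obvious identity $\mathbf{1}_S^{T}A\mathbf{1}_S=2e_G(S)$, a direct expansion collapses to
\[
\boldsymbol{\mu}^{T}A\boldsymbol{\mu}\;=\;2e_G(S)-2\tfrac{s}{n}\cdot sd+\bigl(\tfrac{s}{n}\bigr)^{\!2}\!nd\;=\;2e_G(S)-\tfrac{s^{2}d}{n}.
\]

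Plugging this back into the surplus bound yields
\[
\sp(G)\;\ge\;\tfrac{s^{2}d}{4n}-\tfrac{e_G(S)}{2}\;=\;\tfrac12\!\left(\tfrac{|S|^{2}}{2}\cdot\tfrac{d}{n}-e_G(S)\right),
\]
which is the claim (and is trivially true if the right-hand side is non-positive). The only non-routine step is identifying the correct vector $\boldsymbol{\mu}$: one must mean-shift $\mathbf{1}_S$ so that its entries stay within $[-1,1]$, and it is precisely this shift, combined with the $d$-regularity of $G$, that makes the cross term and the $(s/n)^{2}$ term telescope to the clean expression above. Beyond this observation there is no real obstacle.
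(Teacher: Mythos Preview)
Your proof is correct. The approaches are closely related but not identical. The paper fixes $S$, chooses a random subset $T$ by including each vertex independently with probability $|S|/n$, computes $\bE[e_G(S,T)-e_G(S)-e_G(T)]=\frac{|S|^2}{2}\frac{d}{n}-e_G(S)$ using $d$-regularity, and then has to tidy up the overlap $S\cap T$ by a greedy vertex-removal argument before invoking Lemma~\ref{lemma:surplus additive}. Your argument instead encodes the same bias via a single vector $\boldsymbol{\mu}=\mathbf{1}_S-\tfrac{|S|}{n}\mathbf{1}$ and reads off the surplus directly from the quadratic form $-\tfrac14\boldsymbol{\mu}^T A\boldsymbol{\mu}$; the mean-shift simultaneously keeps the entries in $[-1,1]$ and, via $A\mathbf{1}=d\mathbf{1}$, produces exactly the cancellation the paper obtains combinatorially. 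The payoff of your route is that there is no overlap to repair and no appeal to subadditivity of the surplus---the global cut is produced in one step---at the cost of a slightly more algebraic presentation.
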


\begin{proof}
Let $T$ be a random subset of $V(G)$ such that each $v\in V(G)$ belongs to $T$ with probability $|S|/n$, independently of the other vertices. Write $e_G(S,T)$ for the number of pairs $(u,v)\in S\times T$ with $uv\in E(G)$. Clearly, $\mathbb{E}[e_G(S,T)]=|S|d\cdot \frac{|S|}{n}$. Moreover, $\mathbb{E}[e_G(T)]=\frac{nd}{2}\cdot \frac{|S|^2}{n^2}$. Hence,
$$\mathbb{E}[e_G(S,T)-e_G(S)-e_G(T)]=\frac{|S|^2}{2}\frac{d}{n}-e_G(S).$$
So we may choose $T$ in a way that $e_G(S,T)-e_G(S)-e_G(T)\geq \frac{|S|^2}{2}\frac{d}{n}-e_G(S)$. If $S$ and $T$ are disjoint, then we get the desired surplus in $G[S\cup T]$. Otherwise, one by one for each vertex $v$ which appears both in $S$ and $T$, we can remove $v$ from one of the two remaining sets (to be precise, from the one in which it has more neighbours) and not decrease the surplus corresponding to the cut.
\end{proof}

\lateproof{Lemma~\ref{lemma:sp without deltaplus}}
Let $c$ be the constant provided by Lemma~\ref{lemma:surplusinaux}. If $$\sum_{\substack{uv\in E(G): \\ d_G(u,v)\leq 20d^2/n}} \delta_+(u,v)^3< \frac{3}{2c}sd^2,$$
then the result follows from Lemma~\ref{lemma:sp with deltaplus}. Assume that
\begin{equation}
    \sum_{\substack{uv\in E(G): \\ d_G(u,v)\leq 20d^2/n}} \delta_+(u,v)^3\geq  \frac{3}{2c}sd^2. \label{eqn:deltacubes}
\end{equation}

Define an auxiliary graph $H$ as follows. Let $V(H)=\{(u,v)\in V(G)^2: uv\in E(G)\}$ and take an edge between $(u,v)$ and $(u,w)$ if $vw\in E(G)$. Let all other vertex pairs be non-edges. Observe that $e(H)=3t(G)=d^3/2+3s$. Let $N=|V(H)|=nd$ and $D=\frac{d^2}{n}$. Note that for any $z=(u,v)\in V(H)$, we have $d_H(z)=d_G(u,v)$. Moreover, if $\Delta_+(z)$ is defined as in Lemma~\ref{lemma:surplusinaux}, then for $z=(u,v)$, we have $\Delta_+(z)=\delta_+(u,v)$. Hence, (\ref{eqn:deltacubes}) translates to
$$\sum_{\substack{z\in V(H): \\ d_H(z)\leq 20D}} \Delta_+(z)^3\geq  \frac{3}{c}sd^2,$$
since an edge $uv\in E(G)$ corresponds to both $(u,v)\in V(H)$ and $(v,u)\in V(H)$. Thus,
$$\frac{c}{D^2}\sum_{\substack{z\in V(H): \\ d_H(z)\leq 20D}} \Delta_+(z)^3\geq \frac{3sd^2}{D^2}\geq 3s,$$
and hence
$$e(H)=d^3/2+3s\leq \frac{ND}{2}+\frac{c}{D^2}\sum_{\substack{z\in V(H): \\ d_H(z)\leq 20D}} \Delta_+(z)^3.$$
Therefore, by Lemma~\ref{lemma:surplusinaux}, there exists $T\subset V(H)$ such that
$$e_H(T)\leq \frac{|T|^2}{2}\frac{D}{N}-\frac{c}{D^2}\sum_{\substack{z\in V(H): \\ d_H(z)\leq 20D}} \Delta_+(z)^3.$$
For every $u\in V(G)$, let $T_u=\{(u,v)\in T\}$. Note that $\sum_{u\in V(G)} |T_u|^2\geq \frac{(\sum_{u\in V(G)} |T_u|)^2}{n}=\frac{|T|^2}{n}$. Moreover, there are no edges in $H$ between $T_u$ and $T_v$ when $u\neq v$, so $e_H(T)=\sum_{u\in V(G)} e_H(T_u)$. Thus,
$$\sum_{u\in V(G)} e_H(T_u)\leq \sum_{u\in V(G)} \frac{|T_u|^2}{2}\frac{nD}{N}-\frac{c}{D^2}\sum_{\substack{z\in V(H): \\ d_H(z)\leq 20D}} \Delta_+(z)^3.$$
Therefore there exists $w\in V(G)$ such that
$$e_H(T_w)\leq \frac{|T_w|^2}{2}\frac{nD}{N}-\frac{c}{nD^2}\sum_{\substack{z\in V(H): \\ d_H(z)\leq 20D}} \Delta_+(z)^3=\frac{|T_w|^2}{2}\frac{d}{n}-\frac{2cn}{d^4}\sum_{\substack{uv\in E(G): \\ d_G(u,v)\leq 20d^2/n}} \delta_+(u,v)^3.$$
This means that there exists a set $S\subset N_G(w)$ such that
$$e_G(S)\leq \frac{|S|^2}{2}\frac{d}{n}-\frac{2cn}{d^4}\sum_{\substack{uv\in E(G): \\ d_G(u,v)\leq 20d^2/n}} \delta_+(u,v)^3.$$
Hence, by Lemma~\ref{lemma:surplusfromsparse}, $$\sp(G)\geq \frac{cn}{d^4}\left(\sum_{\substack{uv\in E(G): \\ d_G(u,v)\leq 20d^2/n}} \delta_+(u,v)^3\right) \geq \Omega\left(\gamma^2 \sum_{\substack{uv\in E(G): \\ d(u,v)\leq 20d^2/n}} \frac{\delta_+(u,v)^3}{d^3}\right).$$
This, combined with Lemma~\ref{lemma:sp with deltaplus}, implies the result.
\endproof

\section{Cliques} \label{sec:cliques}

In this section, we prove Theorems~\ref{thm:clique degeneracy}, \ref{thm:clique free in terms of m} and~\ref{thm:clique optimal}.
We start with the proof of Theorem~\ref{thm:clique optimal}, as the proof of the other two results will use its weaker version Theorem~\ref{thm:clique free simple}.

\lateproof{Theorem~\ref{thm:clique optimal}}
We prove the theorem by induction on $r$. The case $r=3$ is Theorem~\ref{thm:triangle simple}. Assume that $r\geq 4$. First observe that we may assume that $G$ has maximum degree at most $Cd$ for some $C$, depending only on $\eps$ and $r$. Indeed, we may apply Lemma~\ref{lemma:regularize tight} with $\eps/r$ in place of $\eps$, $\alpha=-(r-3)$ and $\beta=r-1$ to find an induced subgraph $\tilde{G}$ with $\tilde{n}$ vertices and average degree $\tilde{d}$ such that $\frac{\tilde{d}^{r-1}}{\tilde{n}^{r-3}}\geq (1-\eps/r)\frac{d^{r-1}}{n^{r-3}}$ and $\Delta(\tilde{G})\leq C\tilde{d}$ (if such subgraph does not exist, then $G$ has surplus $\Omega_{\eps,r}(d^{r-1}/n^{r-3})$ by the lemma). Now
$$\tilde{n}^r(\tilde{d}/\tilde{n})^{\binom{r}{2}}=(\tilde{d}^{r-1}/\tilde{n}^{r-3})^{r/2}\geq ((1-\eps/r)d^{r-1}/n^{r-3})^{r/2}= (1-\eps/r)^{r/2}n^r(d/n)^{\binom{r}{2}}\geq (1-\eps/2)n^r(d/n)^{\binom{r}{2}},$$
so the number of $K_r$'s in $\tilde{G}$ is at most $(1-\eps/2)\frac{\tilde{n}^r}{r!}(\tilde{d}/\tilde{n})^{\binom{r}{2}}$.

Thus, by replacing $G$ by $\tilde{G}$ and $\eps$ by $\eps/2$ if necessary, we may assume that $\Delta(G)\leq Cd$ for some $C=C(\eps,r)$.

If $G$ has at most $(1-\frac{\eps}{4r^2})\frac{d}{6n}\sum_{v\in V(G)} d(v)^2$ triangles, then by Theorem~\ref{thm:triangle square threshold}, $G$ has surplus $\Omega_{\eps,r,C}(d^2)$ and we are done since $d^2\ge d^{r-1}/n^{r-3}$. So assume that
\begin{equation}
    t(G)\geq \left(1-\frac{\eps}{4r^2}\right)\frac{d}{6n}\sum_{v\in V(G)} d(v)^2. \label{eqn:triangle count}
\end{equation}
Let $k=\lceil \frac{\eps}{8Cr^2}\frac{n}{d}\rceil$ and let $v_1,v_2,\dots,v_k$ be random vertices chosen from $V(G)$ with replacement. For every $1\leq i\leq k$, let $A_i=N(v_i)\setminus (\cup_{j\neq i} N(v_j))$.

Fix $1\leq i\leq k$. Let $X$ be the number of edges in $G[A_i]$ and let $Y$ be the number of copies of $K_{r-1}$ in $G[A_i]$.

\begin{claim}
    $\expn{X}\geq (1-\frac{\eps}{4r^2})\frac{3t(G)}{n}$. \label{claim:edges in A}
\end{claim}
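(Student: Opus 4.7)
The plan is a direct expected-value computation exploiting the independence of the $v_j$'s.

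First, I would write
\[
X = \sum_{uw\in E(G)} \IND[u,w\in A_i],
\]
and observe that the event $\{u,w\in A_i\}$ is the intersection of two events that are independent of each other: (a) $v_i\in N(u)\cap N(w)$, which depends only on $v_i$, and (b) $v_j\notin N(u)\cup N(w)$ for every $j\neq i$, which depends only on the $v_j$ with $j\neq i$. Using that the $v_j$'s are i.i.d.\ uniform on $V(G)$, this yields
\[
\prob{u,w\in A_i} = \frac{d(u,w)}{n}\cdot\left(1-\frac{|N(u)\cup N(w)|}{n}\right)^{k-1}.
\]

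Second, I would bound the second factor from below. Since $\Delta(G)\leq Cd$, we have $|N(u)\cup N(w)|\leq d(u)+d(w)\leq 2Cd$; we may also assume $d\leq n/(2C)$, since otherwise $d^{r-1}/n^{r-3}\leq d^2$ and the desired surplus bound already follows from Theorem~\ref{thm:triangle simple} (applied after the reduction to maximum degree $\leq Cd$ carried out at the start of the proof). Bernoulli's inequality then gives
\[
\left(1-\frac{2Cd}{n}\right)^{k-1}\geq 1-(k-1)\cdot\frac{2Cd}{n}.
\]
The choice $k=\lceil\frac{\eps}{8Cr^2}\cdot\frac{n}{d}\rceil$ ensures $k-1<\frac{\eps n}{8Cr^2 d}$, so $(k-1)\cdot\frac{2Cd}{n}<\frac{\eps}{4r^2}$, and hence the second factor is at least $1-\frac{\eps}{4r^2}$.

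Finally, summing the resulting bound $\prob{u,w\in A_i}\geq \big(1-\frac{\eps}{4r^2}\big)\frac{d(u,w)}{n}$ over all edges $uw\in E(G)$ and using the standard identity $\sum_{uw\in E(G)}d(u,w)=3t(G)$ (each triangle is counted once per edge), we conclude that $\expn{X}\geq \big(1-\frac{\eps}{4r^2}\big)\cdot\frac{3t(G)}{n}$. There is no real obstacle here: once the independence structure of $\{v_i\}$ versus $\{v_j:j\neq i\}$ is recognised, the bound reduces to Bernoulli's inequality combined with the definition of $k$, which is calibrated exactly so that the probability of a common neighbour of $u,w$ appearing among the other $v_j$'s is at most $\eps/(4r^2)$.
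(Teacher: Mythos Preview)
Your argument is correct and follows essentially the same route as the paper: express $X$ as a sum of indicators over edges, factor the probability using the independence of $v_i$ from $\{v_j:j\neq i\}$, bound $|N(u)\cup N(w)|\le 2Cd$, apply Bernoulli, and use $\sum_{uw\in E(G)}d(u,w)=3t(G)$. One small remark: your side assumption $d\le n/(2C)$ is unnecessary, since if $d>n/(2C)$ then $\frac{\eps}{8Cr^2}\cdot\frac{n}{d}<1$, hence $k=1$ and the factor $\bigl(1-\tfrac{2Cd}{n}\bigr)^{k-1}$ equals $1$ anyway.
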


\claimproof
Let $xy\in E(G)$. We have $xy\in E(G[A_i])$ precisely if $x,y\in N(v_i)\setminus (\cup_{j\neq i} N(v_j))$, i.e. if $v_i\in N(x)\cap N(y)$, but $v_j\not \in N(x)\cup N(y)$ for every $j\neq i$. Since $|N(x)\cup N(y)|\leq  2Cd$, we get
$$\mathbb{P}(xy\in E(G[A_i]))\geq \frac{d(x,y)}{n}\left(1-\frac{2Cd}{n}\right)^{k-1}\geq \frac{d(x,y)}{n}\left(1-(k-1)\frac{2Cd}{n}\right)\geq \left(1-\frac{\eps}{4r^2}\right)\frac{d(x,y)}{n}.$$
Hence,
$$\expn{X}\geq \sum_{xy\in E(G)} \left(1-\frac{\eps}{4r^2}\right)\frac{d(x,y)}{n}= \left(1-\frac{\eps}{4r^2}\right)\frac{3\cdot t(G)}{n}.$$
\endclaimproof

\begin{claim}
    $\expn{Y}\leq (1-\eps)\frac{n^{r-1}}{(r-1)!}(d/n)^{\binom{r}{2}}$. \label{claim:cliques in A}
\end{claim}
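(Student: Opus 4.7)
The plan is to observe that every $K_{r-1}$ in $G[A_i]$ extends, via the vertex $v_i$, to a $K_r$ in $G$. Indeed, by definition of $A_i$, every vertex of $A_i$ is adjacent to $v_i$, so if $\{x_1,\dots,x_{r-1}\}$ spans a $K_{r-1}$ in $G[A_i]$, then $\{v_i,x_1,\dots,x_{r-1}\}$ spans a $K_r$ in $G$. This immediately reduces the bound on $\expn{Y}$ to counting $K_r$-copies in $G$.

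Concretely, I would proceed as follows. For any $(r-1)$-subset $\{x_1,\dots,x_{r-1}\}\subset V(G)$ that spans a $K_{r-1}$, the event that all of $x_1,\dots,x_{r-1}$ lie in $A_i$ requires $v_i$ to be a common neighbour of $x_1,\dots,x_{r-1}$ and each $v_j$ (with $j\ne i$) to avoid $\bigcup_l N(x_l)$. Since the $v_j$'s are independent, this probability equals
$$\frac{d(x_1,\dots,x_{r-1})}{n}\cdot\left(1-\frac{|\bigcup_l N(x_l)|}{n}\right)^{k-1}\le \frac{d(x_1,\dots,x_{r-1})}{n}.$$
By linearity of expectation, summing over the $K_{r-1}$-subsets of $G$ gives
$$\expn{Y}\le \frac{1}{n}\sum_{\{x_1,\dots,x_{r-1}\}\text{ spans }K_{r-1}}d(x_1,\dots,x_{r-1}).$$

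Next I would apply the standard double-counting identity
$$\sum_{\{x_1,\dots,x_{r-1}\}\text{ spans }K_{r-1}}d(x_1,\dots,x_{r-1})=r\cdot k_r(G),$$
where $k_r(G)$ denotes the number of $K_r$-copies in $G$: each $K_r$ is counted once for each of its $r$ $K_{r-1}$-subcliques, and conversely each $K_{r-1}$ together with a common neighbour yields such a $K_r$. Combining this with the hypothesis $k_r(G)\le(1-\eps)\frac{n^r}{r!}(d/n)^{\binom{r}{2}}$ yields
$$\expn{Y}\le \frac{r\cdot k_r(G)}{n}\le (1-\eps)\frac{n^{r-1}}{(r-1)!}(d/n)^{\binom{r}{2}},$$
which is exactly the claimed bound.

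There is no real obstacle here: the argument is a one-line observation (the $K_{r-1}$--$v_i$ extension) plus a trivial double count, and the probability upper bound is crude (discarding the $(1-|\bigcup N(x_l)|/n)^{k-1}$ factor entirely). The only point requiring a moment of care is making sure one counts \emph{unordered} $K_{r-1}$-subsets consistently so that the factor of $r$ in the double-counting identity, and hence the factor of $1/(r-1)!$ in the final bound, comes out correctly.
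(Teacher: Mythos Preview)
Your proof is correct and essentially identical to the paper's own argument: both bound $\mathbb{P}(x_1,\dots,x_{r-1}\in A_i)$ by $d(x_1,\dots,x_{r-1})/n$, sum over all $(r-1)$-cliques, and use the double count $\sum d(x_1,\dots,x_{r-1})=r\cdot k_r(G)$ together with the hypothesis on the number of $K_r$'s.
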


\claimproof
Let $x_1x_2\dots x_{r-1}$ be a copy of $K_{r-1}$ in $G$. Then, writing $d(x_1,\dots,x_{r-1})$ for the number of common neighbours of $x_1,\dots,x_{r-1}$, we have
$\mathbb{P}(x_1,\dots,x_{r-1}\in A_i)\leq \frac{d(x_1,\dots,x_{r-1})}{n}$.
Since the sum of $d(x_1,\dots,x_{r-1})$ over all $(r-1)$-cliques $x_1\dots x_{r-1}$ is $r$ times the number of $K_r$'s, and the number of $K_r$'s in $G$ is at most $(1-\eps)\frac{n^r}{r!}(d/n)^{\binom{r}{2}}$, we get the desired bound.
\endclaimproof

For simplicity, write $A=A_i$. Note that $G[A]$ has average degree $\frac{2X}{|A|}$. By the induction hypothesis, there exists a positive $c'=c'(r,\eps)$ such that if $Y\leq (1-\eps/2)\frac{|A|^{r-1}}{(r-1)!}(2X/|A|^2)^{\binom{r-1}{2}}$, then $G[A]$ has surplus at least $c'(2X)^{r-2}/|A|^{2r-6}$. On the other hand, since $|A|^{r-1}(2X/|A|^2)^{\binom{r-1}{2}}=((2X)^{r-2}/|A|^{2r-6})^{\frac{r-1}{2}}$, if
$$Y\geq (1-\eps/2)\frac{|A|^{r-1}}{(r-1)!}(2X/|A|^2)^{\binom{r-1}{2}},$$
then
$$c'\left(\frac{(r-1)!\cdot Y}{1-\eps/2}\right)^{\frac{2}{r-1}}\geq c'(2X)^{r-2}/|A|^{2r-6}.$$
Hence, in both cases we have
\begin{equation*}
    \sp(G[A])\geq c'(2X)^{r-2}/|A|^{2r-6}-c'\left(\frac{(r-1)!\cdot Y}{1-\eps/2}\right)^{\frac{2}{r-1}}.
\end{equation*}
Taking expectations, we get
\begin{equation}
    \expn{\sp(G[A])}\geq c'\left(2^{r-2}\expn{X^{r-2}/|A|^{2r-6}}-\left(\frac{(r-1)! }{1-\eps/2}\right)^{\frac{2}{r-1}}\expn{Y^{\frac{2}{r-1}}}\right). \label{eqn:surplus in A}
\end{equation}
By H\"older's inequality, we have
$$\expn{\frac{X^{r-2}}{|A|^{2r-6}}}^{\frac{1}{r-2}}\expn{|A|^2}^{\frac{r-3}{r-2}}\geq \expn{\left(\frac{X^{r-2}}{|A|^{2r-6}}\right)^{\frac{1}{r-2}}|A|^{\frac{2(r-3)}{r-2}}}=  \expn{X},$$
so
$$\expn{\frac{X^{r-2}}{|A|^{2r-6}}}\geq \frac{\expn{X}^{r-2}}{\expn{|A|^{2}}^{r-3}}.$$
Observe that $\expn{|A|^2}\le \frac{1}{n}\sum_{v\in V(G)} d(v)^2$. This inequality is the reason why we used Theorem~\ref{thm:triangle square threshold} instead of Theorem~\ref{thm:triangle simple} in our argument.
By Claim~\ref{claim:edges in A} and (\ref{eqn:triangle count}),
$$\expn{X}\geq \left(1-\frac{\eps}{4r^2}\right)\frac{3t(G)}{n}\geq \left(1-\frac{\eps}{4r^2}\right)^2\frac{d}{2n^2}\sum_{v\in V(G)} d(v)^2\geq \left(1-\frac{\eps}{4r^2}\right)^2\frac{d}{2n}\expn{|A|^2}.$$
Thus, using Claim~\ref{claim:edges in A}, (\ref{eqn:triangle count}) and $\sum_v d(v)^2\geq nd^2$,
\begin{align*}
    \frac{\expn{X}^{r-2}}{\expn{|A|^2}^{r-3}}
    &\geq \left(\left(1-\frac{\eps}{4r^2}\right)^2\frac{d}{2n}\right)^{r-3}\expn{X}\geq \left(\left(1-\frac{\eps}{4r^2}\right)^2\frac{d}{2n}\right)^{r-3}\left(1-\frac{\eps}{4r^2}\right)\frac{3t(G)}{n} \\
    &\geq \left(\left(1-\frac{\eps}{4r^2}\right)^2\frac{d}{2n}\right)^{r-3}\left(1-\frac{\eps}{4r^2}\right)^2\frac{d^3}{2n}\geq \left(1-\frac{\eps}{2r}\right)\frac{d^r}{(2n)^{r-2}},
\end{align*}
so
\begin{equation}
    \expn{\frac{X^{r-2}}{|A|^{2r-6}}}\geq \left(1-\frac{\eps}{2r}\right)\frac{d^r}{(2n)^{r-2}}. \label{eqn:ratio lower bound}
\end{equation}
On the other hand, by Claim~\ref{claim:cliques in A}, we have
$$\expn{Y^{\frac{2}{r-1}}}\leq \expn{Y}^{\frac{2}{r-1}}\leq \left(\frac{1-\eps}{(r-1)!}\right)^{\frac{2}{r-1}}\frac{d^r}{n^{r-2}}.$$
Substituting this and equation (\ref{eqn:ratio lower bound}) into equation (\ref{eqn:surplus in A}), we get
$$\expn{\sp(G[A])}\geq c'\left(\left(1-\frac{\eps}{2r}\right)\frac{d^r}{n^{r-2}}-\left(\frac{1-\eps}{1-\eps/2}\right)^{\frac{2}{r-1}}\frac{d^r}{n^{r-2}}\right).$$
Note that $(\frac{1-\eps}{1-\eps/2})^{\frac{2}{r-1}}\leq (1-\eps/2)^{\frac{2}{r-1}}\leq 1-\eps/(r-1)\leq 1-\eps/r$, so we conclude that
$$\expn{\sp(G[A])}\geq c'\frac{\eps}{2r}\frac{d^r}{n^{r-2}}.$$
Since $\sp(G)\geq \sum_{i=1}^k \sp(G[A_i])$ by Lemma~\ref{lemma:surplus additive}, we get
$$\sp(G)\geq k\expn{\sp(G[A])}\geq \frac{\eps}{8Cr^2}c'\frac{\eps}{2r}\frac{d^{r-1}}{n^{r-3}},$$
which completes the proof.
\endproof

	It remains to prove Theorems~\ref{thm:clique degeneracy} and~\ref{thm:clique free in terms of m}.
	We prove them simultaneously using induction on~$r$. 
	Carlson et al.~\cite{CKLMST:18} provided a general tool that allows to convert results on the surplus formulated purely in terms of the number of edges (as in Theorem~\ref{thm:clique free in terms of m}) into results that hold for degenerate graphs (as in Theorem~\ref{thm:clique degeneracy}). In particular, a special case of their Lemma~3.5 is the following: If $\sp(m,K_{r-1})=\Omega_r(m^a)$ for some $a=a(r)\in [\frac{1}{2},1]$, then any $d$-degenerate graph $G$ with $m\ge1$ edges has surplus 
	\begin{align}
	\Omega_r(m/d^{(2-a)/(1+a)}). \label{m to degen}
	\end{align}
	
	\lateproof{Theorems~\ref{thm:clique degeneracy} and~\ref{thm:clique free in terms of m}}
	We know that both theorems hold for $r=3$.
	Assume now that $r\ge 4$ and that both theorems hold for~$r-1$. It follows from \eqref{m to degen} (applied with $a=\frac{1}{2}+\frac{3}{4(r-1)-2}=\frac{r}{2r-3}$) that Theorem~\ref{thm:clique degeneracy} holds for~$r$. We now use this additional information to prove Theorem~\ref{thm:clique free in terms of m} for~$r$.
	
	Let $G$ be any $K_r$-free graph with $m$ edges.
Let $d=m^{\frac{r-1}{2r-1}}$. Assume first that $G$ is $d$-degenerate. Then by Theorem~\ref{thm:clique degeneracy}, $G$ has surplus $\Omega_r(md^{-\frac{r-2}{r-1}})=\Omega_r(m^{1-\frac{r-2}{2r-1}})=\Omega_r(m^{\frac{r+1}{2r-1}})$, which is the desired bound. Otherwise, $G$ has a non-empty induced subgraph $H$ with minimum degree at least $d$. If the number of vertices in $H$ is $n$, then clearly $nd\leq 2e(H)\leq 2e(G)=2m$, so $n\leq \frac{2m}{d}=2m^{\frac{r}{2r-1}}$. Thus, by Theorem~\ref{thm:clique free simple}, $H$ has surplus $$\Omega_r(d^{r-1}/n^{r-3})\geq \Omega_r\left(m^{\frac{(r-1)^2}{2r-1}-\frac{r(r-3)}{2r-1}}\right)=\Omega_r(m^{\frac{r+1}{2r-1}}),$$ which is again the desired bound.
	\endproof

\section{Concluding remarks} \label{sec:concluding remarks}

\begin{itemize}
\item 
We proved that for any odd $r\geq 3$, $\sp(m,C_r)=\Theta_r(m^{(r+1)/(r+2)})$. The same function has been shown to be a lower bound when $r$ is even, but a matching upper bound is only known for $r\in \{4,6,10\}$ (see~\cite{AKS:05}). A related open problem is to construct graphs with $m$ edges and girth at least $r+1$ which have surplus $O_r(m^{(r+1)/(r+2)})$. Such examples are only known for $r=3$ and $r=4$ (see \cite{alon:96} and \cite{ABKS:03}).

\item
We obtained sharp bounds for the surplus of regular graphs as a function of the order, degree and number of triangles. As we remarked after Theorem~\ref{thm:triangledependence}, it is easy to find examples for which the statement of the theorem is not true when $G$ is not $d$-regular, but has average degree $d$. In light of Theorems~\ref{thm:triangle simple} and~\ref{thm:triangle square threshold}, it would be interesting to see if  this result can be extended to irregular graphs in a meaningful way.

\item
We also proved that any $K_r$-free graph with $m$ edges has surplus $\Omega_r(m^{\frac{1}{2}+\frac{3}{4r-2}})$. Arguably, the main open problem is to decide whether there exists a positive absolute constant~$\eps$ such that any $K_r$-free graph with $m$ edges has surplus $\Omega_r(m^{1/2+\eps})$. It seems that the bound $\Omega_r(m^{\frac{1}{2}+\frac{3}{4r-2}})$ is the best our methods can give. We think that any improvement on it, even just beating the exponent $5/7$ in the $r=4$ case, would be interesting.

\item
Another natural infinite sequence of graphs to consider is the set of complete bipartite graphs $K_{r,r}$. Forbidding bipartite subgraphs puts more restrictions on the host graph, so it is potentially easier to show that there exists an absolute constant $\eps>0$ such that any $K_{r,r}$-free graph with $m$ edges has surplus $\Omega_r(m^{1/2+\eps})$. Nevertheless, it is likely that this requires new ideas. See~\cite{AKS:05} for the correct exponent for $K_{r,s}$ when $r\in\Set{2,3}$, and a conjecture for the general case.

\item
Conlon, Fox, Kwan and Sudakov~\cite{CFKS:19} proved analogues of Edwards's result for hypergraphs. It would be interesting to further investigate the MaxCut problem in this setting and to obtain the tight bounds for the surplus.

\item
Finally, we remark that if Conjecture~\ref{conj:degeneracy} and the conjecture of Alon, Bollob\'as, Krivelevich and Sudakov stating $\sp(m,K_r)\geq c_rm^{3/4+\eps_r}$ are not true, then a possible way of disproving them is by finding a $K_r$-free strongly regular graph with many triangles. To be more precise, by Lemma~\ref{lemma:eigenvalue computation}, a strongly regular graph with degree $d$ and $d^3/6+\omega(nd^{3/2})$ triangles has smallest eigenvalue $|\lambda_{\min}|=o(\sqrt{d})$, which implies that it has surplus $o(n\sqrt{d})$. If a $K_r$-free graph exists (for some fixed $r$) with these properties, it disproves Conjecture~\ref{conj:degeneracy}.
\end{itemize}

\providecommand{\bysame}{\leavevmode\hbox to3em{\hrulefill}\thinspace}
\providecommand{\MR}{\relax\ifhmode\unskip\space\fi MR }
\providecommand{\MRhref}[2]{%
  \href{http://www.ams.org/mathscinet-getitem?mr=#1}{#2}
}
\providecommand{\href}[2]{#2}

\begin{appendices}

\section{The smallest eigenvalue of strongly regular graphs}

\begin{lemma} \label{lemma:eigenvalue computation}
    Let $G$ be a strongly regular graph with $n$ vertices, degree $d\leq 0.99n$ and $d^3/6+s$ triangles. Let $\lambda_{\min}$ be the smallest eigenvalue of $G$.
    \begin{itemize}
        \item If $s<-nd^{3/2}$, then $|\lambda_{\min}|= O(\frac{|s|}{nd})$.
        \item If $-nd^{3/2}\leq s\leq nd^{3/2}$, then $|\lambda_{\min}|= O(d^{1/2})$.
        \item If $s>nd^{3/2}$, then $|\lambda_{\min}|=O(\frac{nd^2}{s})$.
    \end{itemize}
\end{lemma}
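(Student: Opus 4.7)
The plan is to start from the closed-form expression
$$\lambda_{\min}=\frac{1}{2}\left(\eta-\mu-\sqrt{(\eta-\mu)^2+4(d-\mu)}\right)$$
recalled in the proof of Theorem~\ref{thm:strongly regular surplus}, introduce the shorthand $X=\eta-\mu$ and $Y=d-\mu$ (so that $|\lambda_{\min}|=(\sqrt{X^2+4Y}-X)/2$), and reduce the task to asymptotic estimates of $X$ and $Y$. We already have $\eta=d^2/n+6s/(nd)$ from~\eqref{eqn:eta}; the remaining ingredient is the value of $\mu$, obtained from the standard double-count of edges between $N(v)$ and $V(G)\setminus(N(v)\cup\Set{v})$ in an SRG: every vertex of $N(v)$ has $d-1-\eta$ neighbours outside $N(v)\cup\Set{v}$, while every vertex outside $N(v)\cup\Set{v}$ has exactly $\mu$ neighbours in $N(v)$, giving $d(d-1-\eta)=(n-1-d)\mu$. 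Substituting the formula for $\eta$ into $\mu=d(d-1-\eta)/(n-1-d)$ and simplifying yields
$$X=\frac{d(n-d)}{n(n-1-d)}+\frac{6s(n-1)}{nd(n-1-d)} \qquad\text{and}\qquad Y=\frac{d(n-d)^2}{n(n-1-d)}+\frac{6s}{n(n-1-d)}.$$

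Using $d\le 0.99n$ (so that $n-d$ and $n-1-d$ are both $\Theta(n)$), the first term of $X$ is $O(1)$, the second is $\Theta(s/(nd))$, and the first term of $Y$ is $\Theta(d)$; moreover the trivial inequalities $0\le\mu\le d$ force $0\le Y\le d$ regardless of $s$, so $\sqrt{Y}=O(\sqrt{d})$. With these estimates in hand, the three ranges are handled separately by elementary asymptotics of $\sqrt{X^2+4Y}\pm X$. If $|s|\le nd^{3/2}$ then $|s|/(nd)\le\sqrt{d}$ gives $|X|=O(\sqrt{d})$, hence $\sqrt{X^2+4Y}=O(\sqrt{d})$ and $|\lambda_{\min}|=O(\sqrt{d})$. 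If $s<-nd^{3/2}$ then the second term of $X$ dominates, so $X<0$ with $|X|=\Theta(|s|/(nd))\ge\sqrt{d}\ge\sqrt{Y}$, and the inequality $\sqrt{X^2+4Y}\le|X|+2Y/|X|$ yields $|\lambda_{\min}|\le|X|+Y/|X|\le 2|X|=O(|s|/(nd))$. If $s>nd^{3/2}$ then $X>0$ with $X=\Theta(s/(nd))$, and $|\lambda_{\min}|=2Y/(\sqrt{X^2+4Y}+X)\le 2Y/X\le 2d/X=O(nd^2/s)$.

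The only real obstacle is the algebraic simplification in deriving the explicit formulas for $X$ and $Y$, together with the book-keeping that the hypothesis $d\le 0.99n$ keeps every relevant denominator of order $n$, so that none of the lower-order terms spoil the three case estimates. Once that is done, each case reduces to the elementary asymptotics $\sqrt{a^2+b}\pm a=\Theta(b/a)$ when $a\gg\sqrt{b}$ and $\sqrt{a^2+b}=\Theta(\sqrt{b})$ when $a\ll\sqrt{b}$, with no further ideas required.
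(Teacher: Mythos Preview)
Your proposal is correct and follows essentially the same approach as the paper: both start from the standard formula $\lambda_{\min}=\frac{1}{2}\bigl(\eta-\mu-\sqrt{(\eta-\mu)^2+4(d-\mu)}\bigr)$, use a double-count to express $\mu$ in terms of $\eta$ (the paper counts copies of $K_{1,2}$, you count edges between $N(v)$ and its complement, which is equivalent), and then do a three-way case analysis on the size of $\eta-\mu$ relative to $\sqrt{d}$. The only cosmetic difference is that the paper works with the shifts $\delta=\eta-d^2/n$ and $\beta=\mu-d^2/n$, whereas you substitute directly to obtain closed forms for $X=\eta-\mu$ and $Y=d-\mu$; your explicit formulas are correct, and the asymptotics you extract from them match the paper's. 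One minor remark: in the case $s>nd^{3/2}$ you assert $X=\Theta(s/(nd))$, but strictly speaking you only establish (and only need) $X>0$ and $X=\Omega(s/(nd))$, since both summands of $X$ are positive when $s>0$; this is harmless for the upper bound $|\lambda_{\min}|\le 2d/X$.
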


\begin{proof}
We can assume that $n$ is sufficiently large, otherwise the statement is clear. Recall that
\begin{equation}
    \lambda_{\min} = \frac{1}{2}\left(\eta-\mu - \sqrt{(\eta-\mu)^2 + 4(d-\mu)} \right), \label{eqn:eigenvalue of strongly regular}
\end{equation}
where $\eta$ is the number of common neighbours of adjacent pairs and $\mu$ is the number of common neighbours of non-adjacent pairs. Double-counting the number of (not necessarily induced) copies of $K_{1,2}$ in $G$, we get
\begin{equation}
    n\binom{d}{2}=\frac{nd}{2}\eta+\left(\binom{n}{2}-\frac{nd}{2}\right)\mu. \label{eqn:eta and mu}
\end{equation}
Let $\delta=\eta-\frac{d^2}{n}$ and $\beta=\mu-\frac{d^2}{n}$. After a straightforward algebraic manipulation of (\ref{eqn:eta and mu}), we get
$$\beta=-\frac{d}{n-d-1}\delta-\frac{d(n-d)}{n(n-d-1)}.$$
Using $d\leq 0.99n$, we see that if $|\delta|\leq 6d^{1/2}$, then $|\beta|=O(d^{1/2})$; if $\delta<-6d^{1/2}$, then $\delta-\beta=-\Theta(|\delta|)$; and if $\delta>6d^{1/2}$, then $\delta-\beta=\Theta(\delta)$.

Now if $s<-nd^{3/2}$, then by (\ref{eqn:eta}), we have $\delta=\frac{6s}{nd}<-6d^{1/2}$, so $\eta-\mu=\delta-\beta=-\Theta(|\delta|)=-\Theta(\frac{|s|}{nd})$. Thus, (\ref{eqn:eigenvalue of strongly regular}) gives $\lambda_{\min}=-\Theta(\frac{|s|}{nd})$.

If $-nd^{3/2}\leq s\leq nd^{3/2}$, then $|\delta|=|\frac{6s}{nd}|\leq 6d^{1/2}$, so $|\eta-\mu|=|\delta-\beta|=O(d^{1/2})$. Thus, (\ref{eqn:eigenvalue of strongly regular}) gives that $|\lambda_{\min}|=O(d^{1/2})$.

Finally, if $s>nd^{3/2}$, then $\delta=\frac{6s}{nd}>6d^{1/2}$, so $\eta-\mu=\delta-\beta=\Theta(\delta)$. Note that $$\sqrt{(\eta-\mu)^2 + 4(d-\mu)}\leq \eta-\mu+\frac{2(d-\mu)}{\eta-\mu},$$ so by (\ref{eqn:eigenvalue of strongly regular}) we have $\lambda_{\min}\geq -\frac{d-\mu}{\eta-\mu}\geq -\frac{d}{\eta-\mu}$. Thus, $|\lambda_{\min}|= O(\frac{d}{\delta})=O(\frac{nd^2}{s})$.
\end{proof}

\section{Proofs for Section~\ref{sec:few triangles}}

Before proving Lemma~\ref{lemma:compare x and y} and Lemma \ref{lemma:sp with deltaplus}, we make a few computations which will be used in the proofs. Assume that $d\leq n/2$ and $0<\gamma\leq 1/10$, like in the setting of these lemmas. Recall that $a=\frac{\sqrt{d}}{n-d}$, which is at most  $\frac{2\sqrt{d}}{n}$ since $d\leq n/2$. Again using $d\leq n/2$, we have $a\leq \frac{1}{\sqrt{d}}$. In particular, $\frac{1}{d}+\frac{2a}{\sqrt{d}}+a^2\leq \frac{4}{d}$. 

Note also that if vectors $\vx^v$ and $\vy^v$ are defined as at the beginning of Section \ref{sec:few triangles}, then for every $v\in V(G)$, $\|\vx^v\|^2=\|\vy^v\|^2=(1+\gamma a)^2+d\frac{\gamma^2}{d}+(n-d-1)\gamma^2 a^2$. Write $q$ for this value and note that $1\leq q\leq (1+\gamma \frac{1}{\sqrt{d}})^2+\gamma^2+(n-d-1)\gamma^2\frac{d}{(n-d)^2}\leq 2$.

\lateproof{Lemma~\ref{lemma:compare x and y}}
For a vertex $v\in V(G)$, write $S(v)$ for the set of vertices $u\in N(v)$ for which $d(u,v)>20d^2/n$. Also write $F(v)=V(G)\setminus N(v)$.
First note that for any $uv\in E(G)$, we have
\begin{align*}
|\langle \vx^u,\vx^v\rangle| &\leq \sum_{w\in V(G)}|\vx^u_w||\vx^v_w| \le  2(1+\gamma a)\frac{\gamma}{\sqrt{d}} +  d\frac{\gamma^2}{d}+(n-d)\gamma^2a^2 + 2d \gamma^2\frac{a}{\sqrt{d}} \\ &\le 2\frac{\gamma}{\sqrt{d}}  + \gamma^2 \left(\frac{2}{n-d}+1+ \frac{d}{n-d}+2\frac{d}{n-d} \right) \leq 2\gamma + 6\gamma^2 \leq 1/2.
\end{align*}
The same estimate holds for $|\langle \vy^u,\vy^v\rangle| $. On the other hand, we have $\|\vx^v\|,\|\vy^v\|\geq 1$ for all~$v$. Hence, for any $uv\in E(G)$, $\left|\frac{\langle \vx^u,\vx^v\rangle}{\|\vx^u\| \|\vx^v\|}\right|\leq 1/2$ and $\left|\frac{\langle \vy^u,\vy^v\rangle}{\|\vy^u\| \|\vy^v\|}\right|\leq 1/2$.

The derivative of $\arcsin(x)$ is $\frac{1}{\sqrt{1-x^2}}$ which is always at least $1$, and for $-1/2\leq x\leq 1/2$ it is at most $2$. Thus, by the mean value theorem, we have
\begin{equation}
    \arcsin\left(\frac{\langle \vy^u,\vy^v\rangle}{\|\vy^u\| \|\vy^v\|}\right)-\arcsin\left(\frac{\langle \vx^u,\vx^v\rangle}{\|\vx^u\| \|\vx^v\|}\right)=\alpha \left(\frac{\langle \vy^u,\vy^v\rangle}{\|\vy^u\| \|\vy^v\|}-\frac{\langle \vx^u,\vx^v\rangle}{\|\vx^u\| \|\vx^v\|}\right) \label{eqn:arcsindiff}
\end{equation}
for some random variable $1\leq \alpha\leq 2$ ($\alpha$ depends on the choices of signs in $\vy^u$ and $\vy^v$).

Write $S(v)_+$ for the set of those $u\in S(v)$ for which $\vy^v_u=\gamma/\sqrt{d}$.
Now if $uv\in E(G)$, then
\begin{align*}
    \langle \vy^u,\vy^v\rangle - \langle \vx^u,\vx^v\rangle
    &=\mathbbm{1}_{u\in S(v)_+}\frac{2\gamma}{\sqrt{d}}+\mathbbm{1}_{v\in S(u)_+}\frac{2\gamma}{\sqrt{d}}+|(S(u)_+\cap F(v))\cup (S(v)_+\cap F(u))|\frac{2\gamma^2 a}{\sqrt{d}} \\
    &-|(S(u)_+\cap N(v))\cup (S(v)_+\cap N(u))\setminus (S(u)_+\cap S(v)_+)|\frac{2\gamma^2}{d}.
\end{align*}
Hence, using that $1\leq \alpha\leq 2$,
\begin{align*}
    \alpha\left(\langle \vy^u,\vy^v\rangle - \langle \vx^u,\vx^v\rangle \right)
    &\leq \mathbbm{1}_{u\in S(v)_+}\frac{4\gamma}{\sqrt{d}}+\mathbbm{1}_{v\in S(u)_+}\frac{4\gamma}{\sqrt{d}}+|(S(u)_+\cap F(v))\cup (S(v)_+\cap F(u))|\frac{4\gamma^2 a}{\sqrt{d}} \\
    &-|(S(u)_+\cap N(v))\cup (S(v)_+\cap N(u))\setminus (S(u)_+\cap S(v)_+)|\frac{2\gamma^2}{d}.
\end{align*}
If $w\in S(u)$, then $\bP(w\in S(u)_+)=1/2$, so
\begin{align}
    \expn{\alpha\left(\langle \vy^u,\vy^v\rangle - \langle \vx^u,\vx^v\rangle \right)}
    &\leq \mathbbm{1}_{u\in S(v)}\frac{2\gamma}{\sqrt{d}}+\mathbbm{1}_{v\in S(u)}\frac{2\gamma}{\sqrt{d}}+|(S(u)\cap F(v))\cup (S(v)\cap F(u))|\frac{2\gamma^2 a}{\sqrt{d}} \nonumber \\
    &-|(S(u)\cap N(v))\cup (S(v)\cap N(u))|\frac{\gamma^2}{d} \nonumber \\
    &\leq \frac{4\gamma}{\sqrt{d}}\mathbbm{1}_{d(u,v)>20d^2/n}+\frac{2\gamma^2 a}{\sqrt{d}}(|S(u)|+|S(v)|) \nonumber \\
    &-\frac{\gamma^2}{2d}(|S(u)\cap N(v)|+ |S(v)\cap N(u)|). \label{eqn:alphatimesdiff}
\end{align}
Recall that $\|\vx^v\|^2=\|\vy^v\|^2=q$ for every $v$, where $1\leq q\leq 2$. Therefore, by equations (\ref{eqn:arcsindiff}) and (\ref{eqn:alphatimesdiff}), we have
\begin{align}
    (\ast)&:=\expn{\sum_{uv\in E(G)} \left(\arcsin \frac{\langle \vy^u,\vy^v\rangle}{\|\vy^u\| \|\vy^v\|}-\arcsin \frac{\langle \vx^u,\vx^v\rangle}{\|\vx^u\| \|\vx^v\|}\right)} \nonumber \\
    &\leq \frac{4\gamma}{\sqrt{d}}\sum_{uv\in E(G)} \mathbbm{1}_{d(u,v)>20d^2/n}
    +\frac{2\gamma^2 a}{\sqrt{d}}\sum_{uv\in E(G)} (|S(u)|+|S(v)|) \nonumber \\
    &-\frac{1}{2}\cdot \frac{\gamma^2}{2d}\sum_{uv\in E(G)} (|S(u)\cap N(v)|+|S(v)\cap N(u)|). \label{eqn:star}
\end{align}
Now note that
\begin{align}
    \sum_{uv\in E(G)} (|S(u)\cap N(v)|+|S(v)\cap N(u)|)
    &= |\{(u,v,w)\in V(G)^3: uv,vw,wu\in E(G), d(u,w)>20d^2/n\}| \nonumber \\
    &=2\sum_{\substack{uw\in E(G): \\ d(u,w)>20d^2/n}} d(u,w). \label{eqn:a.4}
\end{align}
On the other hand,
\begin{equation}
    \sum_{uv\in E(G)} (|S(u)|+|S(v)|)=\sum_{u\in V(G)} d|S(u)|=\sum_{\substack{uv\in E(G): \\ d(u,v)>20d^2/n}} 2d. \label{eqn:sumofs}
\end{equation}
Finally, as the number of triangles in $G$ is at most $d^3/3$, the number of $uv\in E(G)$ with $d(u,v)>20d^2/n$ is at most $\frac{nd}{20}$, so
\begin{equation}
    \sum_{uv\in E(G)} \mathbbm{1}_{d(u,v)>20d^2/n} \leq \frac{nd}{20}. \label{eqn:sumofindicator}
\end{equation}
Thus, plugging (\ref{eqn:a.4}), (\ref{eqn:sumofs}) and (\ref{eqn:sumofindicator}) into (\ref{eqn:star}), we get
\begin{align*}
    (\ast)
    &\leq \gamma \frac{4n\sqrt{d}}{20}-\gamma^2\left(\sum_{\substack{uv\in E(G): \\ d(u,v)>20d^2/n}} \left(\frac{d(u,v)}{2d}-4\sqrt{d}a\right)\right) \leq \frac{\gamma n\sqrt{d}}{4}-\gamma^2\left(\sum_{\substack{uv\in E(G): \\ d(u,v)>20d^2/n}} \frac{d(u,v)}{10d}\right),
\end{align*}
where in the second inequality we used that $a\leq \frac{2\sqrt{d}}{n}$.
\endproof

\lateproof{Lemma~\ref{lemma:sp with deltaplus}}
Define the vectors $\vx^v$ as before. Looking at the Taylor series of $\arcsin(x)$, one can see that $\arcsin(x)\leq x+10x^3$ holds for all $0\leq x\leq 1$. On the other hand, $\arcsin(x)\leq x$ for $-1\leq x<0$. Thus, by (\ref{eqn:innerproduct}),
\begin{align*}
    \arcsin \frac{\langle \vx^u,\vx^v\rangle}{\|\vx^u\|\|\vx^v\|}
    &\leq \frac{\langle \vx^u,\vx^v\rangle}{\|\vx^u\|\|\vx^v\|}+10\left(\frac{1}{\|\vx^u\|\|\vx^v\|}\gamma^2 \left(\frac{1}{d}+\frac{2a}{\sqrt{d}}+a^2\right)\delta_+(u,v)\right)^3 \\
    &\leq \frac{\langle \vx^u,\vx^v\rangle}{\|\vx^u\|\|\vx^v\|}+\gamma^2\frac{\delta_+(u,v)^3}{10d^3},
\end{align*}
where in the second inequality we used $\|\vx^u\|,\|\vx^v\|\geq 1$, $\frac{1}{d}+\frac{2a}{\sqrt{d}}+a^2\leq \frac{4}{d}$ and $\gamma\leq 1/10$. Summing over all $uv\in E(G)$ and using that $q=\|\vx^v\|^2$ for every $v\in V(G)$, we get
\begin{align*}\sum_{uv\in E(G)} \arcsin \frac{\langle \vx^u,\vx^v\rangle}{\|\vx^u\|\|\vx^v\|}
&\leq \frac{1}{q}\sum_{uv\in E(G)} \langle \vx^u,\vx^v\rangle +\gamma^2\sum_{uv\in E(G)}\frac{\delta_+(u,v)^3}{10d^3}.
\end{align*}
By (\ref{eqn:innerproduct}),
\begin{align*}
    \sum_{uv\in E(G)} \langle \vx^u,\vx^v\rangle
    &=-\gamma n\sqrt{d}+\gamma^2\left(\frac{1}{d}+\frac{2a}{\sqrt{d}}+a^2\right)\sum_{uv\in E(G)}\delta(u,v) \\
    &=-\gamma n\sqrt{d}+\gamma^2\left(\frac{1}{d}+\frac{2a}{\sqrt{d}}+a^2\right)3s=-\gamma n\sqrt{d}+\Theta\left(\frac{\gamma^2 s}{d}\right)
\end{align*}
since $\sum_{uv\in E(G)} \delta(u,v)=\sum_{uv\in E(G)} d(u,v)-\frac{nd}{2}d^2/n=3t(G)-d^3/2=3s$.
Using $1\leq q\leq 2$, we get
$$\sum_{uv\in E(G)} \arcsin \frac{\langle \vx^u,\vx^v\rangle}{\|\vx^u\|\|\vx^v\|}
\leq -\frac{\gamma n\sqrt{d}}{2}+\Theta\left(\frac{\gamma^2 s}{d}\right)+\gamma^2\sum_{uv\in E(G)}\frac{\delta_+(u,v)^3}{10d^3}.$$
Substituting this into (\ref{eqn:sp with pi}) and using that if $d(u,v)>d^2/n$, then $\delta_+(u,v)\leq d(u,v)\leq d(u)=d$,
\begin{align*}
    \sp(G)
    &\geq -\frac{1}{\pi}\left(-\frac{\gamma n\sqrt{d}}{4}+\Theta\left(\frac{\gamma^2 s}{d}\right)+\gamma^2\sum_{uv\in E(G)} \frac{\delta_+(u,v)^3}{10d^3}-\gamma^2 \sum_{\substack{uv\in E(G): \\ d(u,v)>20d^2/n}} \frac{d(u,v)}{10d}\right) \\
    &\geq -\frac{1}{\pi}\left(-\frac{\gamma n\sqrt{d}}{4}+\Theta\left(\frac{\gamma^2 s}{d}\right)+\gamma^2 \sum_{\substack{uv\in E(G): \\ d(u,v)\leq 20d^2/n}} \frac{\delta_+(u,v)^3}{10d^3}\right),
\end{align*}
which completes the proof of the lemma.
\endproof

\end{appendices}


\begin{thebibliography}{10}

\bibitem{alon:94}
N.~Alon, \emph{Explicit {R}amsey graphs and orthonormal labelings}, Electron.
  J. Combin.~\textbf{1} (1994), Research Paper 12, 8 pages.

\bibitem{alon:96}
\bysame, \emph{Bipartite subgraphs}, Combinatorica~\textbf{16} (1996),
  301--311.

\bibitem{ABKS:03}
N.~Alon, B.~Bollob\'{a}s, M.~Krivelevich, and B.~Sudakov, \emph{Maximum cuts
  and judicious partitions in graphs without short cycles}, J. Combin. Theory
  Ser. B~\textbf{88} (2003), 329--346.

\bibitem{AK:98}
N.~Alon and N.~Kahale, \emph{Approximating the independence number via the
  {$\theta$}-function}, Math. Programming~\textbf{80} (1998), 253--264.

\bibitem{AKS:05}
N.~Alon, M.~Krivelevich, and B.~Sudakov, \emph{Max{C}ut in {$H$}-free graphs},
  Combin. Probab. Comput.~\textbf{14} (2005), 629--647.

\bibitem{CKLMST:18}
C.~Carlson, A.~Kolla, R.~Li, N.~Mani, B.~Sudakov, and L.~Trevisan, \emph{Lower
  bounds for {M}ax-{C}ut in {$H$}-free graphs via semidefinite programming}, SIAM J. Discrete Math., to appear.
  
\bibitem{CFKS:19}
D.~Conlon, J.~Fox, M.~Kwan and B.~Sudakov, \emph{Hypergraph cuts above the average}, Israel J. Math.~\textbf{233} (2019), 67--111.

\bibitem{DP:93}
C.~Delorme and S.~Poljak, \emph{Laplacian eigenvalues and the maximum cut
  problem}, Math. Programming~\textbf{62} (1993), 557--574.

\bibitem{edwards:73}
C.~S.~Edwards, \emph{Some extremal properties of bipartite subgraphs}, Canadian
  J. Math.~\textbf{25} (1973), 475--485.

\bibitem{edwards:75}
\bysame, \emph{An improved lower bound for the number of edges in a largest
  bipartite subgraph}, Recent advances in graph theory ({P}roc. {S}econd
  {C}zechoslovak {S}ympos., {P}rague, 1974), 1975, pp.~167--181.

\bibitem{erdos:67}
P.~Erd\H{o}s, \emph{On even subgraphs of graphs}, Mat. Lapok~\textbf{18}
  (1967), 283--288.

\bibitem{erdos:79}
\bysame, \emph{Problems and results in graph theory and combinatorial
  analysis}, Graph theory and related topics ({P}roc. {C}onf., {U}niv.
  {W}aterloo, {W}aterloo, 1977), Academic Press, 1979, pp.~153--163.

\bibitem{ER:62}
P.~Erd\H{o}s and A.~R\'{e}nyi, \emph{On a problem in the theory of graphs},
  Publ. Math. Inst. Hungar. Acad. Sci.~\textbf{7} (1962), 215--235.

\bibitem{FHM:21}
J.~Fox, Z.~Himwich, and N.~Mani, \emph{Making an {$H$}-free graph
  $k$-colorable}, arXiv:2102.10220.

\bibitem{GW:95}
M.~X.~Goemans and D.~P.~Williamson, \emph{Improved approximation algorithms for
  maximum cut and satisfiability problems using semidefinite programming}, J.
  ACM~\textbf{42} (1995), 1115--1145.

\bibitem{KS:06}
M.~Krivelevich and B.~Sudakov, \emph{Pseudo-random graphs}, More sets, graphs
  and numbers, Bolyai Soc. Math. Stud.~15, Springer, Berlin, 2006,
  pp.~199--262.

\bibitem{MP:90}
B.~Mohar and S.~Poljak, \emph{Eigenvalues and the max-cut problem},
  Czechoslovak Math. J.~\textbf{40} (1990), 343--352.

\bibitem{MV:19}
D.~Mubayi and J.~Verstra\"ete, \emph{A note on pseudorandom {R}amsey graphs},
  arXiv:1909.01461.

\bibitem{nikiforov:06}
V.~Nikiforov, \emph{The smallest eigenvalue of {$K_r$}-free graphs}, Discrete
  Math.~\textbf{306} (2006), 612--616.

\bibitem{PT:94}
S.~Poljak and Zs.~Tuza, \emph{Bipartite subgraphs of triangle-free graphs},
  SIAM J. Discrete Math.~\textbf{7} (1994), 307--313.

\bibitem{shearer:92}
J.~B.~Shearer, \emph{A note on bipartite subgraphs of triangle-free graphs},
  Random Structures Algorithms~\textbf{3} (1992), 223--226.

\bibitem{SSV:05}
B.~Sudakov, T.~Szab\'{o}, and V.~H.~Vu, \emph{A generalization of {T}ur\'{a}n's
  theorem}, J. Graph Theory~\textbf{49} (2005), 187--195.

\bibitem{ZH:18}
Q.~Zeng and J.~Hou, \emph{Maximum cuts of graphs with forbidden cycles}, Ars
  Math. Contemp.~\textbf{15} (2018), 147--160.

\end{thebibliography}
\end{document}